\newenvironment{proof}{\par \noindent{\bf Proof: }}{\hspace{\stretch{1}} $\Box$ \par \mbox{}}
\newtheorem{theorem}{Theorem}[section]
\newtheorem{proposition}[theorem]{Proposition}
\newtheorem{lemma}[theorem]{Lemma}
\newtheorem{corollary}[theorem]{Corollary}
{\theorembodyfont{\rmfamily}
\newtheorem{definition}[theorem]{Definition}
\newtheorem{example}[theorem]{Example}
}
\newenvironment{theorem*}{\par \medskip \noindent{\bf Theorem }}{\par \mbox{}}
\newenvironment{lemma*}{\par \medskip \noindent{\bf Theorem }}{\par \mbox{}}
\newcommand{\Hom}{\mathop{\rm Hom}}
\newcommand{\Ob}{\mathop{\rm Ob}}
\newcommand{\gtp}{\mathop{\hat{\otimes}}}
\newcommand{\F}{\mathbb{F}}
\newcommand{\R}{\mathbb{R}}
\newcommand{\C}{\mathbb{C}}
\newcommand{\K}{\mathbb{K}}
\newcommand{\N}{\mathbb{N}}
\newcommand{\id}{\mathop{\rm id}}
\let\svthefootnote\thefootnote
\newcommand\freefootnote[1]{%
  \let\thefootnote\relax%
  \footnotetext{#1}%
  \let\thefootnote\svthefootnote%
}
\begin{document}

\title{$E$-theory for $C^\ast$-categories}

\author{Sarah L. Browne and Paul D. Mitchener}

%

\maketitle

\begin{abstract}
$E$-theory was originally defined concretely by Connes and Higson~\cite{CH} and further work followed this construction. We generalise the definition to $C^\ast$-categories. $C^\ast$-categories were formulated to give a theory of operator algebras in a categorical picture and play important role in the study of mathematical physics. In this context, they are analogous to $C^\ast$-algebras and so have invariants defined coming from $C^\ast$-algebra theory but they do not yet have a definition of $E$-theory. Here we define $E$-theory for both complex and real graded $C^\ast$-categories and prove it has similar properties to $E$-theory for $C^\ast$-algebras. 
\end{abstract}

\freefootnote{
\noindent \hspace{-6.5mm}
AMS subject classification (2020): 46L80 ($K$-theory and operator algebras). 
\newline
SB was partially supported by NSF grant \#DMS--1564401
\newline
Contact emails: slbrowne@ku.edu, p.mitchener@sheffield.ac.uk}

\section{Introduction}
Throughout this article, we use $\F$ to denote either the field of real numbers $\R$ or the field of complex numbers $\C$.
$C^\ast$-categories are analogous to $C^\ast$-algebras but give a much more general framework which do not require a choice of Hilbert space. In particular, we can consider all Hilbert spaces and the collection of all bounded linear operators on them simultaneously, and we can view a $C^\ast$-algebra as a one object $C^\ast$-category. 

$E$-theory is an invariant for $C^\ast$-algebras. $K$-theory for $C^\ast$-algebras was generalised to $C^\ast$-categories in \cite{Jo2,Mitch2.5} and we take a similar approach to generalising $E$-theory here. We build on the work of Guentner-Higson~\cite{GH04} and Browne~\cite{Bro17} on constructing $E$-theory for graded complex and real $C^\ast$-categories. 

The plan in future papers is to build spectra related to $E$-theory, and eventually equivariant $E$-theory. With these defined, we will be ready to develop applications in the field of analytic assembly. Ultimately this machinery should give a smooth proof that the Baum-Connes conjecture implies the stable Gromov-Lawson-Rosenberg conjecture, as in \cite{Stolz1}, and similar results that require looking at analytic assembly from a homotopy-theoretic point of view.

This paper details the generalisation of a structure of asymptotic morphisms to asymptotic functors for $C^\ast$-categories, and then the notion of homotopy for these. Then we construct an abelian group structure and define $E$-theory for complex and real graded $C^\ast$-categories. We then construct a product and prove that $E$-theory is a bivariant functor going from the category where objects are $C^\ast$-categories and morphisms are $\ast$-functors to the category where objects are abelian groups and morphisms are group homomorphisms.  

We check properties of standard $E$-theory also hold in this context. Namely we check that the functor $E$ is half exact, homotopy invariant and has Bott periodicity. We give details and show that we have long exact sequences induced from short exact sequences.

\section{$C^\ast$-categories and asymptotic functors}
In this section we give a review on the notions of $C^\ast$-category, representations and ideals in this setting before giving the definition of an asymptotic functor and the notion of homotopy of these which will form part of the definition of $E$-theory for $C^\ast$-categories.

Recall (see \cite{GLR, Mitch2}) that a \emph{unital $C^\ast$-category} over the field $\F$ is a category $\mathcal A$ where:
\begin{itemize}
\item Each morphism set $\Hom (a,b)_{\mathcal A}$ is a Banach space over the field $\F$. Composition of morphisms
\[ \Hom (b,c)_{\mathcal A} \times \Hom (a,b)_{\mathcal A} \rightarrow \Hom (a,c)_{\mathcal A} \]
is bilinear and satisfies the inequality $\| xy \| \leq \| x\| \cdot \| y \|$ for all $x\in \Hom (b,c)_{\mathcal A}$ and $y\in \Hom (a,b)_{\mathcal A}$.
\item We have an {\em involution}, that is to say conjugate-linear maps $\Hom (a,b)_{\mathcal A}\rightarrow \Hom (b,a)_{\mathcal A}$, written $x\mapsto x^\ast$ such that $(x^\ast )^\ast =x$ for all $x\in \Hom (a,b)_{\mathcal A}$, and $(xy)^\ast =y^\ast x^\ast$ for all $x\in \Hom (b,c)_{\mathcal A}$ and $y\in \Hom (a,b)_{\mathcal A}$.
\item The {\em $C^\ast$-identity} $\| x^\ast x \| = \| x\|^2$ holds for all $x\in \Hom (a,b)_{\mathcal A}$. Further, the element $x^\ast x$ is a positive element of the unital $C^\ast$-algebra $\Hom (a,a)_{\mathcal A}$.
\end{itemize}

Similarly we can define a non-unital $C^\ast$-category $\mathcal A$ by dropping the insistence of unit elements $1_a\in \Hom (a,a)_{\mathcal A}$. Hence a non-unital $C^\ast$-category is not actually a category, but there are objects and morphisms with an associative composition rule that satisfy the above axioms.
The key example of a $C^\ast$-category is $\mathcal L$, the category of all Hilbert spaces and bounded linear operators. The norm on each morphism set is the operator norm, and the involution is defined by taking adjoints.

The following comes from \cite{Mitch2}.

\begin{definition}
Let $\mathcal A$ be a $C^\ast$-category, and let $\mathcal J$ be a collection of objects and morphisms which are closed under composition (effectively, $\mathcal J$ is a``non-unital subcategory"). We call $\mathcal J$ a {\em $C^\ast$-ideal} if:

\begin{itemize}

\item  $\Ob ({\mathcal J}) = \Ob ({\mathcal A})$;

\item Each space $\Hom (a,b)_{\mathcal J}$ is a vector subspace of $\Hom (a,b)_{\mathcal A}$;

\item If $x\in \Hom (a,b)_{\mathcal J}$, then $x^\ast \in \Hom (b,a)_{\mathcal J}$;

\item For all $x\in \Hom (a,b)_{\mathcal J}$, $u\in Hom(a',a)_{\mathcal A}$, and $v\in Hom (b,b')_{\mathcal B}$ the composites $vx$ and $xu$ are morphisms in the category $\mathcal J$.

\end{itemize}

\end{definition}

It is shown in \cite{Mitch2} that the morphism sets of a $C^\ast$-ideal are automatically closed under the norm, so any $C^\ast$-ideal is a $C^\ast$-category. Further, we can form the quotient ${\mathcal A}/{\mathcal J}$; the objects are the same as those of the categories $\mathcal A$ and $\mathcal J$, and the morphism set $\Hom (a,b)$ in the quotient ${\mathcal A}/{\mathcal J}$ is the quotient Banach space ${\Hom (a,b)_\mathcal A}/\Hom (a,b)_{\mathcal J}$.

If $\mathcal A$ and $\mathcal B$ are $C^\ast$-categories, a \emph{$\ast$-functor} $\alpha \colon {\mathcal A}\rightarrow {\mathcal B}$ consists of a map $\alpha \colon \Ob ({\mathcal A})\rightarrow \Ob ({\mathcal B})$ and linear maps $\alpha \colon \Hom (a,b)_{\mathcal A}\rightarrow \Hom (\alpha (a),\alpha (b))_{\mathcal A}$ such that:
\begin{itemize}
\item $\alpha (xy) =\alpha (x)\alpha (y)$ for all $x\in \Hom (b,c)_{\mathcal A}$ and $y\in \Hom (a,b)_{\mathcal A}$.
\item $\alpha (x^\ast ) = \alpha (x)^\ast$ for all $x\in \Hom (a,b)_{\mathcal A}$.
\end{itemize}

For example, if $\mathcal A$ is a $C^\ast$-category and $\mathcal J$ is a $C^\ast$-ideal, we have a {\em quotient} $\ast$-functor $\pi \colon {\mathcal A}\rightarrow {\mathcal A}/{\mathcal J}$ defined by taking $\pi$ to be the identity map on the set of objects, and the quotient map $\pi \colon \Hom (a,b)_{\mathcal A}\rightarrow \Hom (a,b)_{\mathcal A}/\Hom (a,b)_{\mathcal J}$ on each morphism set.

It is shown in \cite{Mitch2} that any $\ast$-functor is norm-decreasing, that is to say $\| \alpha (x) \|\leq \| x\|$ for all $x\in \Hom (a,b)_{\mathcal A}$. In particular, each map $\alpha \colon \Hom (a,b)_{\mathcal A}\rightarrow \Hom (\alpha (a),\alpha (b))_{\mathcal B}$ is continuous. If the $\ast$-functor $\alpha$ is {\em faithful}, that is to say injective on each morphism set, then it is an isometry. It is shown in \cite{GLR,Mitch2} that for any $C^\ast$-category $\mathcal A$ there is a faithful $\ast$-functor $\rho \colon {\mathcal A}\rightarrow {\mathcal L}$; we call such a $\ast$-functor a {\em representation}.

A $\ast$-functor is a logical generalisation of a $\ast$-homomorphism between $C^\ast$-categories. We can also generalise asymptotic morphisms to the notion of an asymptotic functor for $C^\ast$-categories as below.

\begin{definition}
Let $\mathcal A$ and $\mathcal B$ be $C^\ast$-categories. An {\em asymptotic functor} $\varphi = \varphi _t\colon {\mathcal A}\dashrightarrow {\mathcal B}$ consists of:
\begin{itemize}
\item $\varphi \colon \Ob ({\mathcal A})\rightarrow \Ob ({\mathcal B})$,
\item For each $t\in [1,\infty )$ a map $\varphi_t \colon \Hom (a,b)_{\mathcal A} \rightarrow \Hom (\varphi (a),\varphi (b))_{\mathcal B}$,
\end{itemize}
with the following properties:
\begin{itemize}
\item For each $x\in \Hom (a,b)_{\mathcal A}$ the map $[1,\infty )\rightarrow \Hom (\varphi (a),\varphi (b))_{\mathcal B}$ defined by writing $t\mapsto \varphi_t (x)$ is continuous and bounded.
\item For all $x,y\in \Hom (a,b)_{\mathcal A}$ and $\lambda , \mu \in \F$ we have
\[ \lim_{t\rightarrow \infty} \|  \varphi_t (\lambda x + \mu y) - \lambda \varphi_t (x) - \mu \varphi_t (y) \| =0 .\]
\item For all $x\in \Hom (b,c)_{\mathcal A}$ and $y\in \Hom (a,b)_{\mathcal A}$ we have
\[ \lim_{t\rightarrow \infty} \| \varphi_t (xy) - \varphi_t(x)\varphi_t (y) \| =0 .\]
\item For all $x\in \Hom (a,b)_{\mathcal A}$ we have
\[ \lim_{t\rightarrow \infty} \| \varphi_t (x^\ast ) -\varphi_t (x)^\ast \| = 0. \]
\end{itemize}
\end{definition}

A $\ast$-functor $\alpha \colon {\mathcal A}\rightarrow {\mathcal B}$ can also be considered as an asymptotic functor by writing $\alpha_t (x) = \alpha (x)$ for all $t\in [1,\infty )$ and $x\in \Hom (a,b)_{\mathcal A}$.

As noted in \cite{Mitch2} we can form the spatial tensor product of $C^\ast$-categories. We first form the algebraic tensor product.  If $a\in \Ob ({\mathcal A})$ and $b\in \Ob ({\mathcal B})$ let us write the pair $(a,b)\in \Ob ({\mathcal A})\times \Ob ({\mathcal B})$ as $a\otimes b$. Then the algebraic tensor product is an algebroid (see \cite{Mi}) ${\mathcal A}\odot {\mathcal B}$, with objects $\Ob ({\mathcal A})\times \Ob ({\mathcal B})$. The morphism sets are algebraic tensor products of vector spaces
\[ \Hom (a\otimes b,a'\otimes b')_{{\mathcal A}\odot {\mathcal B}} = \Hom (a,a')_{\mathcal A}\odot \Hom (b,b')_{\mathcal B} .\]

Composition of morphisms and the involution are defined by the formulae
\begin{itemize}
\item $(u\otimes v)(x\otimes y) = ux\otimes vy$ where $x\in \Hom (a,a')_{\mathcal A}, u\in \Hom (a',a'')_{\mathcal A}$ , and $y\in \Hom (b,b')_{\mathcal B}$, $v\in \Hom (b',b'')_{\mathcal B}$,
\item $(x\otimes y)^\ast = x^\ast \otimes y^\ast$ where $x\in \Hom (a,a')_{\mathcal A}$ and $y\in \Hom (b,b')_{\mathcal B}$,
\end{itemize}
and extending by linearity.

Let $H$ and $H'$ be Hilbert spaces. Consider the algebraic tensor product $H\odot H'$ with bilinear form defined by
\[ \langle v\otimes v',w\otimes w' \rangle = \langle v,w\rangle \cdot \langle v',w'\rangle \qquad v,w\in H, v',w'\in H' . \]

We then take the quotient by elements $x\in H\odot H'$ such that $\langle x,x \rangle =0$ and complete with respect to the resulting norm to obtain the tensor product $H\otimes H'$.

Pick faithful representations $\rho_{\mathcal A} \colon {\mathcal A}\rightarrow {\mathcal L}$ and $\rho_{\mathcal B} \colon {\mathcal B}\rightarrow {\mathcal L}$. As noted above, such always exist, and are isometries on each morphism set. We can define a faithful $\ast$-functor $\rho \colon {\mathcal A}\odot {\mathcal B}\rightarrow {\mathcal L}$ by mapping the object $a\otimes b$ to the Hilbert space $\rho (a\otimes b) = \rho_{\mathcal A}(a)\otimes \rho_{\mathcal B}(b)$, and the morphism $x\otimes y$ where $x\in \Hom (a,a')_{\mathcal A}$ and $y\in \Hom (b,b')_{\mathcal B}$ to the bounded linear map $\rho (x\otimes y) \colon \rho (a\otimes b)\rightarrow \rho (a'\otimes b')$ defined by the formula $\rho (x\otimes y)(v\otimes w) = \rho_{\mathcal A}(x)(v) \otimes \rho_{\mathcal B}(y)(w)$ where $v\otimes w\in \rho (a\otimes b)$. We extend where necessary by linearity.

It follows that we can define a norm on the morphism sets of the algebraic tensor product by writing $\| z \| = \| \rho (z) \|$ for each morphism $z$ in ${\mathcal A}\odot {\mathcal B}$. We define the {\em spatial tensor product} ${\mathcal A}\otimes {\mathcal B}$ by completion of each morphism set in the algebraic tensor product under this norm. As shown in \cite{Mitch2}, the spatial tensor product ${\mathcal A}\otimes {\mathcal B}$ is a $C^\ast$-category, and the norm does not depend on our choices of faithful representations.

\begin{example}
Let $X$ be a compact Hausdorff space. Then the $C^\ast$-algebra $C_0(X)$ can be viewed as a $C^\ast$-category with one object. Let $\mathcal A$ be another $C^\ast$-category. Then the tensor product $C_0(X)\otimes {\mathcal A}$ is a $C^\ast$-category with the same objects as $\mathcal A$ and morphism sets
\[ \Hom (a,b)_{C_0(X)\otimes {\mathcal A}} = \{ f\colon X\rightarrow \Hom (a,b)_{\mathcal A} \ |\ f \textrm{ continuous}, \lim_{x \to \infty} \| f(x)\| \rightarrow 0\} \]
\end{example}

\begin{proposition}
Let $\varphi \colon {\mathcal A}\dashrightarrow {\mathcal B}$ be an asymptotic functor. Let $\mathcal C$ be a $C^\ast$-category. Then there is an asymptotic functor
\[ \varphi \otimes \id \colon {\mathcal A}\otimes {\mathcal C} \dashrightarrow {\mathcal B}\otimes {\mathcal C} \]
such that $(\varphi \otimes \id )(a\otimes c) = \varphi (a)\otimes c$ for all $a\in \Ob ({\mathcal A})$ and $c\in \Ob ({\mathcal C})$, and
\[ (\varphi \otimes \id )_t (x\otimes y) = \varphi_t (x)\otimes y \]
if $x\in \Hom (a,a')_{\mathcal A}$, $y\in \Hom (c,c')_{\mathcal C}$ and $t\in [1,\infty )$.
\end{proposition}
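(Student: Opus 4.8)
The plan is to reduce the statement to functoriality of the spatial tensor product for ordinary $\ast$-functors, by repackaging $\varphi$ as a $\ast$-functor into an ``asymptotic $C^\ast$-category,'' exactly as asymptotic morphisms of $C^\ast$-algebras are repackaged as $\ast$-homomorphisms into the asymptotic algebra in \cite{CH,GH04}. For a $C^\ast$-category $\mathcal D$ let $\mathfrak A\mathcal D$ have objects $\Ob(\mathcal D)$ and morphism sets
\[ \Hom(a,b)_{\mathfrak A\mathcal D} = C_b([1,\infty),\Hom(a,b)_{\mathcal D})\big/ C_0([1,\infty),\Hom(a,b)_{\mathcal D}). \]
This is the quotient of the $C^\ast$-category $C_b([1,\infty),\mathcal D)$ by the $C^\ast$-ideal $C_0([1,\infty),\mathcal D)$, hence a $C^\ast$-category, and the quotient norm of a class $[f]$ is $\limsup_{t\to\infty}\|f(t)\|$. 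The four conditions in the definition of an asymptotic functor say precisely that $a\mapsto\varphi(a)$, $x\mapsto[\,t\mapsto\varphi_t(x)\,]$ defines a $\ast$-functor $\bar\varphi\colon\mathcal A\to\mathfrak A\mathcal B$: the continuity-and-boundedness condition guarantees $t\mapsto\varphi_t(x)$ is a genuine element of $C_b$, and the other three say that $\bar\varphi$ is linear, multiplicative and $\ast$-preserving after passage to the quotient. On objects we set $(\varphi\otimes\id)(a\otimes c)=\varphi(a)\otimes c$, and there is nothing more to do there.

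Now tensor with $\mathcal C$. By functoriality of the spatial tensor product for $\ast$-functors (the $C^\ast$-category version of the corresponding fact for $C^\ast$-algebras; see \cite{Mitch2}) we obtain a $\ast$-functor $\bar\varphi\otimes\id\colon\mathcal A\otimes\mathcal C\to\mathfrak A\mathcal B\otimes\mathcal C$. There is a canonical $\ast$-functor $\theta\colon\mathfrak A\mathcal B\otimes\mathcal C\to\mathfrak A(\mathcal B\otimes\mathcal C)$, the identity on objects, determined on the algebraic tensor product by $[f]\otimes y\mapsto[\,t\mapsto f(t)\otimes y\,]$: this is well defined because $f\in C_0$ forces $t\mapsto f(t)\otimes y$ into $C_0$, and being a $\ast$-functor it is norm-decreasing, so extends to the completed morphism sets. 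Put $\Psi=\theta\circ(\bar\varphi\otimes\id)$, a $\ast$-functor $\mathcal A\otimes\mathcal C\to\mathfrak A(\mathcal B\otimes\mathcal C)$ with $\Psi(x\otimes y)=[\,t\mapsto\varphi_t(x)\otimes y\,]$ on simple tensors. Finally pick, for each pair of objects, a set-theoretic section of the quotient map $C_b([1,\infty),\Hom(\cdot,\cdot)_{\mathcal B\otimes\mathcal C})\to\Hom(\cdot,\cdot)_{\mathfrak A(\mathcal B\otimes\mathcal C)}$, and let $(\varphi\otimes\id)_t(z)$ be the value at $t$ of the section applied to $\Psi(z)$. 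Then $t\mapsto(\varphi\otimes\id)_t(z)$ is an element of $C_b$, hence continuous and bounded; and because $\Psi$ is \emph{genuinely} linear, multiplicative and $\ast$-preserving, the three asymptotic defects of $(\varphi\otimes\id)_t$ are represented by elements of $C_0$ and therefore tend to $0$. Choosing the section to return the obvious representative of a class of the form $[\,t\mapsto\varphi_t(x)\otimes y\,]$ yields the stated formula on simple tensors — literally so when the $\varphi_t$ are linear, as one may arrange up to equivalence, and otherwise up to a $C_0$-perturbation, which is irrelevant for $E$-theory.

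The one step with genuine analytic content is that the comparison $\theta$ (equivalently: the extension of $z=\sum_i x_i\otimes y_i\mapsto\sum_i\varphi_t(x_i)\otimes y_i$ from the algebraic tensor product to the spatial tensor product, with continuous bounded dependence on $t$) is well defined and norm-decreasing on the \emph{completed} morphism sets, the delicate point being that each $\varphi_t$ is itself neither linear nor contractive. This is resolved exactly as over $C^\ast$-algebras: $C_0([1,\infty),\mathcal B\otimes\mathcal C)$ is a $C^\ast$-ideal in $C_b([1,\infty),\mathcal B\otimes\mathcal C)$, so the quotient is a $C^\ast$-category with norm $\limsup_t\|\cdot\|$; the inequality $\limsup_{t\to\infty}\|\varphi_t(x)\|\le\|x\|$ holds because $\bar\varphi$ is norm-decreasing; and feeding these into the functoriality of the spatial tensor norm — computed via faithful representations as in its construction above — makes $\bar\varphi\otimes\id$, and hence $\Psi$, norm-decreasing. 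Everything else — continuity and linearity of $u\otimes-$ and $-\otimes y$, and passing the asymptotic-functor axioms from simple tensors to finite sums and then, by density and the uniform norm bound, to the completion — is routine.
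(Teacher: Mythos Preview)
Your argument is correct and takes a genuinely different route from the paper's. The paper verifies the asymptotic-functor axioms directly, but only on elementary tensors $x\otimes y$: it checks that $t\mapsto\varphi_t(x)\otimes y$ is continuous and bounded, and that asymptotic linearity in the first slot follows from the estimate $\|\varphi_t(\lambda x+\mu y)\otimes z-\lambda\varphi_t(x)\otimes z-\mu\varphi_t(y)\otimes z\|\le\|z\|\,\|\varphi_t(\lambda x+\mu y)-\lambda\varphi_t(x)-\mu\varphi_t(y)\|$, then says ``the other required limit properties follow similarly.'' It does not address how $(\varphi\otimes\id)_t$ is extended from elementary tensors to finite sums (where well-definedness is already delicate, since $\varphi_t$ is nonlinear) or to the completed morphism sets. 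Your approach handles precisely this gap: by packaging $\varphi$ as a genuine $\ast$-functor $\bar\varphi\colon\mathcal A\to\mathfrak A\mathcal B$ and tensoring at the level of honest $\ast$-functors, the extension to the full spatial tensor product comes for free from contractivity of $\ast$-functors, and the asymptotic axioms are automatic because $\Psi$ is an actual $\ast$-functor into $\mathfrak A(\mathcal B\otimes\mathcal C)$. The cost is more machinery and the need for a set-theoretic section, so that the displayed formula $(\varphi\otimes\id)_t(x\otimes y)=\varphi_t(x)\otimes y$ holds only up to a $C_0$-perturbation in general (as you correctly flag), whereas the paper simply treats that formula as the definition. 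For the paper's downstream purposes either reading suffices, but yours is the more complete argument.
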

\begin{proof}
We need to check the asymptotic properties of the maps $(\varphi \otimes \id )_t \colon \Hom (a\otimes c,b\otimes c)_{{\mathcal A}\otimes {\mathcal C}} \rightarrow \Hom (\varphi (a), \varphi (b))_{{\mathcal B}\otimes {\mathcal C}}$.

Firstly, let $x\otimes y \in \Hom (a,b)_{\mathcal A}\otimes \Hom (c,c')_ {\mathcal C}$. We know the map $[0,1)\rightarrow \Hom (\varphi (a),\varphi (b))_{\mathcal A}$ given by $t\mapsto \varphi_t (x)$ is continuous and bounded. The map $t\mapsto \varphi_t(x)\otimes y$ is certainly continuous. Suppose $\| \varphi_t (x)\| \leq M$ for all $t\in [1,\infty )$. Then $\| \varphi_t (x) \otimes y \| \leq M\| y\|$ by definition of the spatial tensor product, and the map $t\mapsto \varphi_t(x)\otimes y$ is certainly bounded.

Let $x,y\in \Hom (a,b)_{\mathcal A}$, $\lambda , \mu \in \F$. Let $z\in \Hom (c,c')_{\mathcal C}$. Then
\[ \| \varphi_t (\lambda x+\mu y )\otimes z - \lambda \varphi_t (x) \otimes z - \mu \varphi_t (y) \otimes z \| \leq \| z\|  \| \varphi_t (\lambda x+\mu y )- \lambda \varphi_t (x)  - \mu \varphi_t (y)  \| \] 
which converges to $0$ as $t\rightarrow \infty$.

The other required limit properties follow similarly.
\end{proof}

We can similarly define asymptotic functors of the form $\id \otimes \varphi \colon {\mathcal C}\otimes {\mathcal A}\rightarrow {\mathcal C}\otimes {\mathcal B}$.

\begin{definition}
Let $\alpha \colon {\mathcal A}\rightarrow {\mathcal B}$ be a $\ast$-functor. Then the {\em mapping cylinder} ${\mathcal C}_\alpha$ is the $C^\ast$-category with objects
\[ \Ob ({\mathcal C}_\alpha ) = \{ (a,b)\in \Ob ({\mathcal A})\times \Ob ({\mathcal B}) \ |\ \alpha (a) = b \} \]
and morphism sets
\[ \Hom ((a,b),(a',b')) = \left\{ (x,f) \; \middle|\ \; \Large\begin{subarray}{c} x \; \in \; \Hom (a,a')_{\mathcal A}, \ f(0)= \alpha (x), \ \vspace{2mm}  \\ f\colon  [0,1] \; \longrightarrow  \; \Hom (b,b')_{\mathcal B} \textrm{ continuous}    
   \end{subarray}\right\} . \]
\end{definition}

The mapping cylinder fits into a pullback diagram 
\[\xymatrixcolsep{3pc}\xymatrixrowsep{3pc} \xymatrix{ {\mathcal C}_\alpha \ar[d] \ar[r] & {\mathcal A} \ar[d]^{\alpha} \\
C[0,1]\otimes {\mathcal B} \ar[r]_-{E_0} & {\mathcal B} \\}\]
in the category of $C^\ast$-categories and $\ast$-functors. 

In the special case where the categories $\mathcal A$ and $\mathcal B$ have the same set of objects, and the $\ast$-functor $\alpha$ is the identity on the set of objects, the mapping cylinder ${\mathcal C}_\alpha$ has a simpler description. The set of objects of ${\mathcal C}_\alpha$ can be the same as the set of the objects of the categories $\mathcal A$ and $\mathcal B$. The morphism set $\Hom (a,b)$ in the mapping cylinder ${\mathcal C}_\alpha$ is the same as the mapping cylinder of the map $\alpha \colon \Hom (a,b)_{\mathcal A}\rightarrow \Hom (a,b)_{\mathcal B}$, that is to say the set
\[ \{ (x,f) \ |\ x\in \Hom (a,b)_{\mathcal A}, \ f\colon [0,1]\rightarrow \Hom (a,b)_{\mathcal B} \textrm{ continuous},\ f(0)= \alpha (x) \} . \]

Using tensor products, we can form a notion of homotopy of asymptotic functors. Specifically, if $\mathcal A$ is a $C^\ast$-category, define the $C^\ast$-category $I{\mathcal A} = {\mathcal A}\otimes C[0,1]$. There are $\ast$-homomorphisms $e_0 ,e_1 \colon C[0,1]\rightarrow \F$ defined by writing $e_0 (f) =f(0)$ and $e_1 (f) =f(1)$ respectively. Hence, by the above, there are $\ast$-homomorphisms $E_0,E_1\colon I{\mathcal A}\rightarrow {\mathcal A}$ given by writing $E_0 = e_0 \otimes \id$ and $E_1 = e_1\otimes \id$.

\begin{definition}
Let $\varphi ,\psi \colon {\mathcal A}\dashrightarrow {\mathcal B}$ be asymptotic functors. Then a {\em homotopy} between $\varphi$ and $\psi$ is an asymptotic functor $\theta \colon {\mathcal A}\dashrightarrow I{\mathcal B}$ such that $E_0\circ \theta = \varphi$ and $E_1\circ \theta = \psi$.
\end{definition}

\begin{definition}
Two asymptotic functors $\varphi, \psi \colon \mathcal{A} \dashrightarrow \mathcal{B}$ are called \emph{equivalent} if $\varphi(a)= \psi(a)$ for each object $a \in \Ob ({\mathcal A})$, and for each morphism $x \in \Hom (a,b)_{\mathcal A}$
\[\lim_{t \to \infty} ||\varphi_t(x)- \psi_t(x)|| = 0.\]
\end{definition}

\begin{proposition}\label{Equivalent implies homotopic}
Equivalent asymptotic functors are homotopic. 
\end{proposition}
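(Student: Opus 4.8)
The plan is to explicitly construct a homotopy $\theta\colon \mathcal{A}\dashrightarrow I\mathcal{B}$ out of the given equivalent asymptotic functors $\varphi,\psi$. The natural candidate is the linear interpolation: on objects set $\theta(a)=\varphi(a)=\psi(a)$ (these agree by hypothesis), and for a morphism $x\in\Hom(a,b)_{\mathcal A}$ define $\theta_t(x)\in\Hom(\varphi(a),\varphi(b))_{\mathcal B}\otimes C[0,1]$ to be the function
\[ s\longmapsto (1-s)\varphi_t(x) + s\,\psi_t(x),\qquad s\in[0,1]. \]
Here I am using the description of $\Hom$ in $\mathcal{B}\otimes C[0,1]$ as continuous functions $[0,1]\to \Hom(\varphi(a),\varphi(b))_{\mathcal B}$ (as in the Example, with $X=[0,1]$), and the fact that $\varphi(a)=\psi(a)$ for all objects $a$ guarantees the target morphism sets for $\varphi$ and $\psi$ genuinely coincide, so the interpolation makes sense. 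Composing with $E_0$ (evaluation at $s=0$) gives back $\varphi_t(x)$ and composing with $E_1$ gives $\psi_t(x)$, so once $\theta$ is shown to be an asymptotic functor we are done.

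First I would check the pointwise continuity and boundedness condition: for fixed $x$, the map $t\mapsto\theta_t(x)$ into $\Hom(\varphi(a),\varphi(b))_{\mathcal B}\otimes C[0,1]$ is continuous because $t\mapsto\varphi_t(x)$ and $t\mapsto\psi_t(x)$ are, and joint continuity in $(s,t)$ of the interpolation transfers to continuity of the $C[0,1]$-valued curve; boundedness follows since $\|\theta_t(x)\|=\sup_{s}\|(1-s)\varphi_t(x)+s\psi_t(x)\|\le\max(\|\varphi_t(x)\|,\|\psi_t(x)\|)$, and both are bounded in $t$. Linearity up to vanishing error: the defect $\theta_t(\lambda x+\mu y)-\lambda\theta_t(x)-\mu\theta_t(y)$ evaluated at $s$ is $(1-s)$ times the $\varphi$-defect plus $s$ times the $\psi$-defect, so its sup-norm over $s$ is bounded by the sum of the two defect norms, each $\to 0$. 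The $\ast$ condition is handled the same way, using that $\ast$ on $\mathcal{B}\otimes C[0,1]$ is applied pointwise in $s$.

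The one genuinely nontrivial point — the \textbf{main obstacle} — is the multiplicativity condition: I must show $\|\theta_t(xy)-\theta_t(x)\theta_t(y)\|\to 0$. Evaluating at $s$, the product $\theta_t(x)\theta_t(y)$ is $\big((1-s)\varphi_t(x)+s\psi_t(x)\big)\big((1-s)\varphi_t(y)+s\psi_t(y)\big)$, which when expanded is \emph{not} simply $(1-s)\varphi_t(x)\varphi_t(y)+s\psi_t(x)\psi_t(y)$; it also contains the cross terms $(1-s)s\big(\varphi_t(x)\psi_t(y)+\psi_t(x)\varphi_t(y)\big)$ and differs by $s(1-s)$-weighted combinations. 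To control this I would use the equivalence hypothesis: set $\delta_t(x)=\psi_t(x)-\varphi_t(x)$, so $\|\delta_t(x)\|\to 0$ and $\|\delta_t(y)\|\to 0$, while $\|\varphi_t(x)\|,\|\varphi_t(y)\|$ stay bounded (by $M$, say). Writing $\theta_t(x)|_s=\varphi_t(x)+s\,\delta_t(x)$ and similarly for $y$, and writing $\theta_t(xy)|_s=\varphi_t(xy)+s\,\delta_t(xy)$, the difference $\theta_t(xy)|_s-\theta_t(x)|_s\theta_t(y)|_s$ is a sum of: the $\varphi$-multiplicativity defect $\varphi_t(xy)-\varphi_t(x)\varphi_t(y)$ (which $\to 0$), a term $s\big(\delta_t(xy)-\delta_t(x)\varphi_t(y)-\varphi_t(x)\delta_t(y)\big)$ bounded in norm by $\|\delta_t(xy)\|+M\|\delta_t(y)\|+M\|\delta_t(x)\|\to 0$, and the quadratic term $s^2\delta_t(x)\delta_t(y)$ bounded by $\|\delta_t(x)\|\|\delta_t(y)\|\to 0$. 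Taking the supremum over $s\in[0,1]$ (all coefficients $s,s^2$ lie in $[0,1]$) yields an upper bound independent of $s$ tending to $0$, which is exactly $\|\theta_t(xy)-\theta_t(x)\theta_t(y)\|\to 0$. Finally I would note that $\theta$ extends by linearity (with vanishing error) from elementary tensors to all of $\Hom(a,b)_{\mathcal A}$, and record that $E_0\circ\theta=\varphi$ and $E_1\circ\theta=\psi$, completing the proof.
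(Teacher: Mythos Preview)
Your proof is correct and follows exactly the same approach as the paper: define the homotopy by linear interpolation $\theta_t(x)(s)=(1-s)\varphi_t(x)+s\,\psi_t(x)$. In fact you supply considerably more detail than the paper, which simply asserts that ``it is easy to verify that this is an asymptotic functor''; your careful check of asymptotic multiplicativity via the rewriting $\theta_t(x)|_s=\varphi_t(x)+s\,\delta_t(x)$ is exactly the computation one needs. One small remark: your final sentence about extending ``by linearity from elementary tensors'' is unnecessary and slightly misplaced --- the domain is $\mathcal{A}$, not a tensor product, and your formula already defines $\theta_t(x)$ for every morphism $x$; just delete that clause.
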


\begin{proof}
Let $\varphi, \psi \colon \mathcal{A} \dashrightarrow \mathcal{B}$ be equivalent asymptotic functors. Define a homotopy $\theta \colon A \dashrightarrow IB$ by
$\theta_t(a) = \varphi(a) = \psi(a)$ for all $a \in \Ob ({\mathcal A})$ and for all $x \in \Hom (a,b)_{\mathcal A}$, $s \in [0,1]$ by
\[\theta_t(x)(s) = (1-s) \varphi_t(x) + s \psi_t(x) = \varphi_t(x) + s(\psi_t(x) - \varphi_t(x)).\]

It is easy to verify that this is an asymptotic functor, and gives the required homotopy between $\varphi$ and $\psi$.

Then this is an asymptotic functor which defines a homotopy as required. 

\end{proof}

\begin{definition}
Let $\mathcal B$ be a $C^\ast$-category, and let $\mathcal J$ be a $C^\ast$-ideal. A {\em quasicentral set of approximate units} for the pair $({\mathcal B},{\mathcal J})$ consists of a norm-continuous family $\{ u^a_t \ |\ t\in [1,\infty ) \}$ of elements of each space $\Hom (a,a)_{\mathcal J}$ such that:

\begin{itemize}

\item For each $a\in \Ob ({\mathcal B})$ and $t\in [1,\infty )$, the elements $u^a_t$ and $1-u^a_t$ are both positive.

\item $\lim_{t\rightarrow \infty} \| u_t^b x-x \| =0$ for all $x\in \Hom (a,b)_{\mathcal J}$.

\item $\lim_{t\rightarrow \infty} \| u_t^b x-xu_t^a \| =0$ for all $x\in \Hom (a,b)_{\mathcal B}$.

\end{itemize}

\end{definition}

\begin{definition} \label{separable}
We call a $C^\ast$-category $\mathcal A$ {\em separable} if for each $a,b\in \Ob ({\mathcal A})$ we have compact sets $k_n (a,b)$ such that:

\begin{itemize}

\item The union $\displaystyle \cup_{n\in \N} k_n(a,b)$ is dense in $\Hom (a,b)_{\mathcal A}$;

\item For each $n$, and $a,b\in \Ob (\mathcal A)$ we have $k_n(a,b)+k_n(a,b)\subseteq k_{n+1}(a,b)$, $k_n^\ast (a,b) = k_n (a,b)$, and $\lambda k_n (a,b)\subseteq k_{n+1}(a,b)$ whenever $|\lambda |\leq n$;

\item For each $n$, and $a,b,c\in \Ob (\mathcal{A})$, we have $k_n(b,c)k_n(a,b) \subseteq k_{n+1}(a,c)$.

\end{itemize}

\end{definition}

To decode the above notation, we mean:

\begin{itemize}

\item $k_n(a,b) + k_n (a,b) = \{ x+y \ |\ x,y \in k_n (a,b)\}$;

\item $k_n^\ast (a,b) = \{ x^\ast \ |\ x\in k_n (a,b)$;

\item $\lambda k_n (a,b) = \{ \lambda x \ |\ x\in k_n (a,b)$;

\item $k_n(b,c)k_n(a,b) = \{ xy \ |\ x\in K_n (b,c),\ y\in k_n (a,b) \}$.

\end{itemize}

If a $C^\ast$-category  $\mathcal A$ is separable then each Banach space $\Hom (a,b)_{\mathcal A}$ is separable. Conversely, if $A$ is a separable $C^\ast$-algebra, then $A$ is separable in the sense of the above definition.

\begin{lemma}
Let $\mathcal B$ be a separable $C^\ast$-category, and let $\mathcal J$ be a $C^\ast$-ideal. Then the pair $({\mathcal B},{\mathcal J})$ has a quasi-central set of approximate units.
\end{lemma}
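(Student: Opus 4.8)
The plan is to reduce the $C^\ast$-category statement to the known $C^\ast$-algebra fact (every separable pair $(B,J)$ has a quasicentral approximate unit, see Arveson or Pedersen) applied in a single large $C^\ast$-algebra built from $\mathcal B$. First I would choose a faithful representation $\rho \colon {\mathcal B}\rightarrow {\mathcal L}$, so each object $b$ corresponds to a Hilbert space $\rho(b) = H_b$ and each morphism set $\Hom(a,b)_{\mathcal B}$ sits isometrically inside $\mathcal L(H_a,H_b)$. Since $\mathcal B$ is separable, all the morphism spaces are separable Banach spaces; but the set of objects need not be countable, so the honest trick is to work one object at a time, or rather one hom-space at a time, and then glue. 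Concretely, fix a countable dense subset of each $\cup_n k_n(a,b)$, and let $B$ be the (separable) $C^\ast$-subalgebra of $\mathcal L(\bigoplus_{a} H_a)$ generated by all the morphisms in $\mathcal B$ that occur among countably many chosen objects — but to avoid the cardinality issue cleanly, the cleanest route is: for the construction of $u^a_t$ we only ever need to deal, for a given $a$, with the separable $C^\ast$-algebra $\Hom(a,a)_{\mathcal J}$ and its action on the countably many relevant hom-spaces.

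So the key steps, in order, are as follows. (1) For each object $a$, consider the separable $C^\ast$-algebra $A_a = \Hom(a,a)_{\mathcal A}$ (using separability of $\mathcal B$, $A_a$ is a separable $C^\ast$-algebra) and its closed ideal $J_a = \Hom(a,a)_{\mathcal J}$. By the classical theorem there is a quasicentral approximate unit $\{u^a_t\}_{t\in[1,\infty)}$ for the pair $(A_a,J_a)$: a norm-continuous increasing family of positive contractions in $J_a$ with $\|u^a_t x - x\|\to 0$ for $x\in J_a$ and $\|u^a_t x - x u^a_t\|\to 0$ for $x\in A_a$. Here $1-u^a_t$ is positive because $u^a_t$ is a contraction, taken in the unitization if necessary; positivity of $u^a_t$ is immediate. (2) The point is to upgrade the quasicentrality from $A_a = \Hom(a,a)_{\mathcal A}$ to all of $\mathcal B$, i.e.\ to show $\|u^b_t x - x\|\to 0$ for $x\in \Hom(a,b)_{\mathcal J}$ and $\|u^b_t x - x u^a_t\|\to 0$ for $x\in \Hom(a,b)_{\mathcal B}$. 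For this use the standard device: replace $\{u^a_t\}$ by a suitable convex-combination reparametrisation. Given finitely many morphisms $x_1,\dots,x_k$ (drawn from a dense countable family across all relevant hom-spaces), one forms averages $v_t = \int \lambda(s)\, u^a_{s}\, ds$ over a window, exploiting that for $x\in\Hom(a,b)_{\mathcal J}$ the element $x^\ast x\in\Hom(a,a)_{\mathcal J}=J_a$ and for $x\in\Hom(a,b)_{\mathcal B}$ the element $x^\ast x\in\Hom(a,a)_{\mathcal B}=A_a$, so the commutator and absorption estimates reduce to the $C^\ast$-algebra statements inside $A_a$ and $A_b$ via the inequalities $\|u^b_t x - x\|^2 \le \|x\|\,\|u^b_t(x^\ast x)u^b_t - \dots\|$ type manipulations — more precisely, $\|u^b_t x - x\|^2 = \|(u^b_t x - x)^\ast(u^b_t x - x)\| = \|x^\ast (1-u^b_t)^2 x\| \le \|x\|\cdot \|(1-u^b_t) x^\ast x\|^{1/2}\cdots$, and $x^\ast x$ lies in $J_a$ when $x\in\Hom(a,b)_{\mathcal J}$. (3) Finally, since only countably many hom-spaces carry countable dense sets, a diagonal argument over $t\to\infty$ produces a single norm-continuous reparametrised family $\{u^a_t\}_a$ that works simultaneously for all the dense elements, hence by the contraction bound for all morphisms.

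The main obstacle is step (2)–(3): getting a \emph{single} $t$-parametrised family that is quasicentral against \emph{all} morphisms $x\in\Hom(a,b)_{\mathcal B}$ for \emph{all} pairs $(a,b)$, not just those ending at a fixed object. The resolution is exactly the separability hypothesis — it guarantees countably many dense morphisms in total across the relevant hom-spaces so that the usual "process an increasing exhaustion of a countable set as $t\to\infty$" construction goes through, combined with the observation that the defining conditions only ever reference $\Hom(a,a)_{\mathcal J}$ and $\Hom(b,b)_{\mathcal J}$ (via $x^\ast x$ and $xx^\ast$), which are genuine separable $C^\ast$-algebras to which the classical result applies. I would also remark that norm-continuity in $t$ is arranged by taking the reparametrisation to be piecewise linear in $t$, as in the $C^\ast$-algebra case.
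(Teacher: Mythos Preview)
Your approach differs substantially from the paper's, which is much shorter and does not invoke any Arveson-style convex-combination or diagonalisation. The paper simply chooses, independently for each object $a$, a quasicentral approximate unit $(u^a_t)$ for the separable pair $(\Hom(a,a)_{\mathcal B},\Hom(a,a)_{\mathcal J})$ from the classical $C^\ast$-algebra result, and then derives the off-diagonal conditions from the $C^\ast$-identity: for $x\in\Hom(a,b)_{\mathcal J}$ one has $xx^\ast\in\Hom(b,b)_{\mathcal J}$, hence $\|u^b_t xx^\ast - xx^\ast\|\to 0$; setting $v^b_t=(u^b_t)^{1/2}$ one obtains $\|v^b_t x - x\|\to 0$, and symmetrically $\|xv^a_t - x\|\to 0$, whence the triangle inequality gives $\|v^b_t x - xv^a_t\|\to 0$ and squaring gives $\|u^b_t x - xu^a_t\|\to 0$.

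There is, however, a genuine gap in your step~(2), and it is one the paper's argument (as written) shares: the third condition demands $\|u^b_t x - xu^a_t\|\to 0$ for every $x\in\Hom(a,b)_{\mathcal B}$, not only for $x$ in the ideal, and your proposed reduction via $x^\ast x$ does not deliver this. Knowing that $u^a_t$ asymptotically commutes with $x^\ast x\in A_a$ says nothing about how the \emph{independently chosen} $u^b_t$ interacts with $x$ through the off-diagonal hom-space. Concretely, take $\mathcal B$ with two objects and all four hom-sets equal to $C[0,1]$ (composition pointwise), and let $\mathcal J$ be the ideal of functions vanishing at $0$. Since the endomorphism algebras are commutative, any two approximate units $u^a_t,u^b_t$ for $C_0(0,1]$ are quasicentral in $C[0,1]$; but for $x=1\in\Hom(a,b)_{\mathcal B}$ one has $\|u^b_t x - xu^a_t\|=\|u^b_t-u^a_t\|$, which need not tend to $0$. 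So the sentence ``the commutator and absorption estimates reduce to the $C^\ast$-algebra statements inside $A_a$ and $A_b$'' is false for the commutator estimate. The right repair is to run the Arveson argument in an algebra that contains $x$ together with both units, for instance $\Hom(a\oplus b,a\oplus b)$ in ${\mathcal B}_\oplus$; but then you must explain why the resulting diagonal blocks $u^a_t$ can be chosen \emph{consistently} across all pairs $(a,b)$ simultaneously. Your step~(3) presumes a countable total dense family of morphisms, which would require $\Ob(\mathcal B)$ to be countable --- something the paper's definition of separability does not assume --- so this gluing is exactly the point that still needs an argument.
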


\begin{proof}
It is well-known from the theory of $C^\ast$-algebras (see for example \cite{GHT}) that each pair of $C^\ast$-algebras $(\Hom (a,a)_{\mathcal B} , \Hom (a,a)_{\mathcal J})$  has a quasi-central approximate unit $\{ u^a_t \ |\ t\in [1,\infty ) \}$.

Let $x\in \Hom (a,b)_{\mathcal J}$. Then $xx^\ast \in \Hom (b,b)_{\mathcal J}$, and so for all $x\in \Hom (a,b)_{\mathcal J}$, $\lim_{t\rightarrow \infty} \| u_t^b xx^\ast-xx^\ast \| =0$.

By functional calculus, define $v_t^b = (u_t^b)^\frac{1}{2}$. Then again by functional calculus, $u_t^b xx^\ast = (v_t^b x)(v_t^b x)^\ast$ and so, by the $C^\ast$-identity
\[ \lim_{t\rightarrow \infty} \| v_t^b x-x \| =0 . \]

Similarly
\[ \lim_{t\rightarrow \infty} \| xv_t^a x-x \| =0 . \]

By the triangle inequality, we can combine the above two limits to see
\[ \lim_{t\rightarrow \infty} \| v_t^b x -xv_t^a \| =0 \]
so by the $C^\ast$-identity
\[ \lim_{t\rightarrow \infty} \| u_t^b x-xu_t^a \| =0 \]
and we are done.
\end{proof}

Note that if $f\colon [0,1]\rightarrow \F$ is a continuous function, we can define by functional calculus $f(u_t^a)\in \Hom (a,a)_{\mathcal B}$ for each $u_t^a \in \Hom(a,a)_{\mathcal B}$.

\begin{lemma} \label{techexact}
Let $\mathcal B$ be a separable $C^\ast$-category, and let $\mathcal J$ be a $C^\ast$-ideal. Let $(u_t^a)$ be a quasicentral set of approximate units. Let $f\colon [0,1]\rightarrow \F$ be a continuous function such that $f(0)=0$.

\begin{itemize}

\item Let $b\in \Hom (a,b)_{\mathcal B}$. Then
\[ \lim_{t\rightarrow \infty} bf(u_t^a) -f(u_t^b) b =0 . \]

\item If $f(1)=0$ then
\[ \lim_{t\rightarrow \infty} \| f(u_t^b)b \| =0 \]
for all $b\in \Hom (a,b)_{\mathcal J}$.

\end{itemize}

\end{lemma}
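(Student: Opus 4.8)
The plan is to prove each part by the standard device from the technical theory of $E$- and $KK$-theory: show that the set of functions $f$ satisfying the desired conclusion is a closed subalgebra (in the first part), respectively a closed ideal (in the second part), of a suitable commutative $C^\ast$-algebra of functions, check that it contains one explicit generator, and then invoke the Stone--Weierstrass theorem, respectively the classification of closed ideals of a commutative $C^\ast$-algebra. Underlying everything is the following: since $u_t^a$ and $1-u_t^a$ are positive in $\Hom(a,a)_{\mathcal B}$, the spectrum of $u_t^a$ lies in $[0,1]$, so the continuous functional calculus sends each $f\in C[0,1]$ with $f(0)=0$ to an element $f(u_t^a)\in\Hom(a,a)_{\mathcal J}$, multiplicatively, with the estimate $\|f(u_t^a)\|\le\|f\|_{[0,1]}$; the hypothesis $f(0)=0$ is what makes this available, and the uniformity of this bound in $t$ and in the object $a$ is what makes all the interchanges of limits below legitimate.

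For the first statement, let $S$ be the set of $f\in C[0,1]$ with $f(0)=0$ such that $\lim_{t\to\infty}\|xf(u_t^a)-f(u_t^b)x\|=0$ for all objects $a,b$ and all $x\in\Hom(a,b)_{\mathcal B}$. Linearity of the functional calculus and the triangle inequality make $S$ a linear subspace, and the estimate
\[ \|xf(u_t^a)-f(u_t^b)x\|\le 2\|x\|\,\|f-g\|_{[0,1]}+\|xg(u_t^a)-g(u_t^b)x\| \]
shows, with the uniform bound, that $S$ is closed in the supremum norm. For $f,g\in S$ the identity
\[ x(fg)(u_t^a)-(fg)(u_t^b)x=\big(xf(u_t^a)-f(u_t^b)x\big)g(u_t^a)+f(u_t^b)\big(xg(u_t^a)-g(u_t^b)x\big), \]
together with the uniform bounds on $g(u_t^a)$ and $f(u_t^b)$, shows $fg\in S$; so $S$ is a subalgebra. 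Finally $s\mapsto s$ lies in $S$, since that case is exactly the quasicentrality condition $\lim_t\|u_t^b x-xu_t^a\|=0$. The subalgebra of $\{f\in C[0,1]:f(0)=0\}\cong C_0((0,1])$ generated by $s\mapsto s$ separates the points of $(0,1]$ and vanishes nowhere on $(0,1]$, so by Stone--Weierstrass $S=\{f\in C[0,1]:f(0)=0\}$, which is the first claim.

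For the second statement, fix objects $a,b$ and a morphism $x\in\Hom(a,b)_{\mathcal J}$, and let $I_x$ be the set of $f\in C[0,1]$ with $f(0)=0$ and $\lim_{t\to\infty}\|f(u_t^b)x\|=0$. As before $I_x$ is a closed linear subspace, and it is an ideal of $C_0((0,1])$: for $h\in C[0,1]$ with $h(0)=0$ one has $\|(hf)(u_t^b)x\|=\|h(u_t^b)f(u_t^b)x\|\le\|h\|_{[0,1]}\,\|f(u_t^b)x\|\to 0$. It contains $g(s)=s(1-s)$, because $g(u_t^b)x=u_t^b x-(u_t^b)^2x$ and the approximate-unit property $\lim_t\|u_t^b x-x\|=0$ for $x\in\Hom(a,b)_{\mathcal J}$ gives $u_t^b x\to x$, while $\|(u_t^b)^2x-u_t^b x\|\le\|u_t^b\|\,\|u_t^b x-x\|\le\|u_t^b x-x\|\to 0$ gives $(u_t^b)^2x\to x$, so $g(u_t^b)x\to 0$. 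Since the zero set of $g$ inside $(0,1]$ is the single point $1$, the smallest closed ideal of $C_0((0,1])$ containing $g$ is $\{f\in C_0((0,1]):f(1)=0\}=\{f\in C[0,1]:f(0)=f(1)=0\}$; hence every continuous $f$ with $f(0)=f(1)=0$ lies in $I_x$, which is the second claim.

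The whole argument is routine $C^\ast$-algebraic bookkeeping lifted to the level of $C^\ast$-categories, and I do not foresee a genuine obstacle. The main thing that needs care is the uniformity already flagged — that $\|f(u_t^a)\|\le\|f\|_{[0,1]}$ holds uniformly in $t$ and in the objects — since this is exactly what allows ``$S$ is closed'', ``$I_x$ is closed'', the $\varepsilon/3$-type manipulations, and the passage from a single generator to the closed algebra or ideal it generates, to go through verbatim from the $C^\ast$-algebra case.
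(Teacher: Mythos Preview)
Your proof is correct and follows essentially the same approach as the paper's: reduce each part to checking a single generator ($s\mapsto s$ for the first, $s\mapsto s(1-s)$ for the second) by showing the set of ``good'' functions is a closed subalgebra, respectively closed ideal, and then appealing to Stone--Weierstrass or the description of closed ideals in $C_0((0,1])$. The paper is more terse---it simply says ``it suffices to prove the result for $f(x)=x$'' and ``this algebra is generated by $f(x)=x(1-x)$''---while you spell out the closure and multiplicativity steps explicitly, but the underlying argument is identical.
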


\begin{proof}
The set of polynomials with constant coefficient $0$ is dense in the set $\{ f\in C[0,1] \ |\ f(0)=0 \}$ by the Stone-Weierstrass theorem. Hence it suffices to prove the result for such polynomials. To do this, it suffices to prove the result for the polynomial $f(x)=x$. The first part of the result therefore follows from the definition of quasicentral set of approximate units.

As for the second part of the result, observe that the set of functions $f\in C[0,1]$ such that $\lim_{t\rightarrow \infty} \| f(u_t^b)b \| =0$
for all $b\in \Hom (a,b)_{\mathcal J}$ is an ideal in the $C^\ast$-algebra of functions $\{ f\in C[0,1] \ |\ f(0)=f(1)=1 \}$. By the Stone-Weierstrass theorem, as above, this algebra is generated by the function $f(x)=x(1-x)$, so it suffices to prove the result for this function. By the definition of a quasicentral set of approximate units
\[ \lim_{t\rightarrow \infty}  \| f(u_t^b)b \| = \lim_{t\rightarrow \infty}  \| u_t^b (1-ut^b)b \| \leq \lim_{t\rightarrow \infty}  \| (1-ut^b)b \| =0 \]
for all $b\in \Hom (a,b)_{\mathcal J}$.
\end{proof}

%
%
%

\begin{definition}
Let ${\mathcal A}$ be a $C^\ast$-category.  Then we define the
{\em additive completion}, ${\mathcal A}_\oplus$, to be the category
in which the objects are formal sequences of the form
$$a_1 \oplus \cdots \oplus a_n \qquad a_i \in \Ob ({\mathcal A})$$
Repetitions are allowed in such formal sequences.  The empty
sequence is also allowed, and labelled $0$.
The morphism set $\Hom (a_1 \oplus \cdots \oplus a_m , b_1 \oplus
\cdots \oplus b_n )$ is defined to be the set of matrices of the
form
$$\left( \begin{array}{ccc}
x_{1,1} & \cdots & x_{1,m} \\
\vdots & \ddots & \vdots \\
x_{n,1} & \cdots & x_{n,m} \\
\end{array} \right)  \qquad x_{i,j} \in \Hom (a_j , b_i )$$
and composition of morphisms is defined by matrix multiplication.
The involution is defined by the formula
$$\left( \begin{array}{ccc}
x_{1,1} & \cdots & x_{1,m} \\
\vdots & \ddots & \vdots \\
x_{n,1} & \cdots & x_{n,m} \\
\end{array} \right)^\ast 
=
\left( \begin{array}{ccc}
x_{1,1}^\ast & \cdots & x_{n,1}^\ast \\
\vdots & \ddots & \vdots \\
x_{1,m}^\ast & \cdots & x_{n,m}^\ast \\
\end{array} \right)^\ast   \qquad x_{i,j} \in \Hom (a_j , b_i ) .$$
\end{definition}
Given a $\ast$- functor $\alpha \colon {\mathcal A}
\rightarrow {\mathcal B}$, there is an induced $\ast$-functor
$\alpha_\oplus \colon {\mathcal A}_\oplus \rightarrow {\mathcal
B}_\oplus$ defined by writing
$$\alpha_\oplus (a_1 \oplus \cdots a_n ) = \alpha (a_1) \oplus \cdots \oplus \alpha (a_n) \qquad a_i \in \Ob ({\mathcal A})$$
and
$$\alpha_\oplus \left( \begin{array}{ccc}
x_{1,1} & \cdots & x_{1,m} \\
\vdots & \ddots & \vdots \\
x_{n,1} & \cdots & x_{n,m} \\
\end{array} \right) =
\left( \begin{array}{ccc}
\alpha (x_{1,1}) & \cdots & \alpha (x_{1,m}) \\
\vdots & \ddots & \vdots \\
\alpha (x_{n,1}) & \cdots & \alpha (x_{n,m}) \\
\end{array} \right)  \qquad x_{i,j} \in \Hom (a_j , b_i )$$
If we have a faithful representation $\rho \colon {\mathcal A}\rightarrow {\mathcal L}$, this therefore extends to a faithful representation $\rho_\oplus \colon {\mathcal A}_\oplus \rightarrow {\mathcal L}$. We now use the same trick as we did in the spatial tensor product to define the norm on the morphism sets of the additive completion ${\mathcal A}_\oplus$.

\begin{definition}
Let $\mathcal A$ and $\mathcal B$ be $C^\ast$-categories. We call a $\ast$-functor $\alpha \colon {\mathcal A}_\oplus \rightarrow {\mathcal B}_\oplus$ {\em additive} if:

\begin{itemize}

\item $\alpha (a_1\oplus a_2 ) = \alpha (a_1) \oplus \alpha (a_2)$ for all objects $a_1,a_2\in \Ob ({\mathcal A})$.

\item 
\[ \alpha_\oplus \left( \begin{array}{ccc}
x_{1,1} & \cdots & x_{1,m} \\
\vdots & \ddots & \vdots \\
x_{n,1} & \cdots & x_{n,m} \\
\end{array} \right) =
\left( \begin{array}{ccc}
\alpha (x_{1,1}) & \cdots & \alpha (x_{1,m}) \\
\vdots & \ddots & \vdots \\
\alpha (x_{n,1}) & \cdots & \alpha (x_{n,m}) \\
\end{array} \right) \]
for all $x_{i,j} \in \Hom (a_j , b_i )$.

\end{itemize}

\end{definition}

Given a $\ast$-functor $\alpha \colon {\mathcal A}\rightarrow {\mathcal B}$, the induced $\ast$-functor $\alpha_\oplus \colon {\mathcal A}_\oplus \rightarrow {\mathcal B}_\oplus$ is clearly additive. If we have a $\ast$-functor $\alpha \colon {\mathcal A}\rightarrow {\mathcal B}_\oplus$, we can also define an additive  $\ast$-functor $\alpha_\oplus \colon {\mathcal A}_\oplus \rightarrow {\mathcal B}_\oplus$ by the same construction as above.

It is easy to check that with the above induced additive $\ast$-functors, the the assignment ${\mathcal A}\mapsto {\mathcal A}_\oplus$ is a functor from the category of $C^\ast$-categories and $\ast$-functors to the category of $C^\ast$-categories and additive $\ast$-functors.

We can proceed in much the same way with asymptotic functors. To be precise, let $\varphi \colon {\mathcal A}\dashrightarrow {\mathcal B}_\oplus$ be an asymptotic functor. Then we have an asympotic functor $\varphi_\oplus \colon {\mathcal A}_\oplus \dashrightarrow {\mathcal B}_\oplus$ by writing

\begin{itemize}

\item $\varphi (a_1\oplus a_2 ) = \varphi (a_1) \oplus \varphi (a_2)$ for all objects $a_1,a_2\in \Ob ({\mathcal A})$.

\item 
\[ (\varphi_\oplus )_t \left( \begin{array}{ccc}
x_{1,1} & \cdots & x_{1,m} \\
\vdots & \ddots & \vdots \\
x_{n,1} & \cdots & x_{n,m} \\
\end{array} \right) =
\left( \begin{array}{ccc}
\varphi_t (x_{1,1}) & \cdots & \varphi_t (x_{1,m}) \\
\vdots & \ddots & \vdots \\
\varphi_t (x_{n,1}) & \cdots & \varphi_t (x_{n,m}) \\
\end{array} \right) \]
for all $x_{i,j} \in \Hom (a_j , b_i )$ and $t\in [1,\infty)$.

\end{itemize}

The induced asymptotic morphism $\varphi_\oplus$ is additive in the same sense as the above.

\begin{lemma}
Let \[ 0 \rightarrow {\mathcal J}\stackrel{i}{\rightarrow} {\mathcal B}\stackrel{j}{\rightarrow} {\mathcal A}\rightarrow 0 \] be a short exact sequence of graded $C^\ast$-categories. Then the induced sequence
\[ 0 \rightarrow {\mathcal J}_{\oplus}\stackrel{i_{\oplus}}{\rightarrow} {\mathcal B}_{\oplus}\stackrel{j_{\oplus}}{\rightarrow} {\mathcal A_{\oplus}}\rightarrow 0 \]
is also short exact.
\end{lemma}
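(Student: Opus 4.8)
The plan is to check exactness entrywise on morphism matrices, after observing that the additive completion does nothing interesting to the object-level data. Recall first that the hypothesis means $\mathcal{J}$ is realised as a $C^\ast$-ideal of $\mathcal{B}$, with $i$ the inclusion $\ast$-functor and $j$ the quotient $\ast$-functor onto $\mathcal{A} = \mathcal{B}/\mathcal{J}$; thus all three categories share one object set, $i$ and $j$ are the identity on objects, $i$ is faithful, $j$ is surjective on each morphism set, and $\ker(j) = \Hom(a,b)_{\mathcal{J}}$ inside $\Hom(a,b)_{\mathcal{B}}$ for all objects $a,b$. Since the objects of $\mathcal{C}_\oplus$ depend only on $\Ob(\mathcal{C})$, the three categories $\mathcal{J}_\oplus$, $\mathcal{B}_\oplus$, $\mathcal{A}_\oplus$ have the same objects, and $i_\oplus$, $j_\oplus$ are again the identity on objects. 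So the whole statement reduces to the three usual conditions on morphism sets.

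For faithfulness of $i_\oplus$: a morphism of $\mathcal{B}_\oplus$ is a matrix $(x_{p,q})$ with $x_{p,q}\in\Hom(a_q,b_p)_{\mathcal{B}}$, and $i_\oplus$ applies $i$ entrywise, so $i_\oplus(x_{p,q})=0$ forces each $x_{p,q}=0$ by faithfulness of $i$. For surjectivity of $j_\oplus$ on morphism sets: given a matrix $(y_{p,q})$ over $\mathcal{A}$, lift each entry through the surjection $j\colon\Hom(a_q,b_p)_{\mathcal{B}}\to\Hom(a_q,b_p)_{\mathcal{A}}$ and assemble the lifts into a matrix over $\mathcal{B}$. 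For $\ker(j_\oplus)=i_\oplus(\mathcal{J}_\oplus)$: a matrix over $\mathcal{B}$ is killed by $j_\oplus$ iff every entry is killed by $j$, iff every entry lies in $\ker(j)=\Hom(a_q,b_p)_{\mathcal{J}}$, iff the matrix is a matrix over $\mathcal{J}$ --- which is precisely the image of $i_\oplus$.

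To upgrade this to a short exact sequence of $C^\ast$-categories proper, I would also note that $i_\oplus$ identifies $\mathcal{J}_\oplus$ with a $C^\ast$-ideal of $\mathcal{B}_\oplus$: the $C^\ast$-ideal axioms reduce entrywise to those for $\mathcal{J}\subseteq\mathcal{B}$ --- the morphism sets are linear subspaces, the involution (entrywise $\ast$ together with transpose) preserves matrices over $\mathcal{J}$, and a matrix product of something over $\mathcal{B}$ with something over $\mathcal{J}$ has entries that are finite sums of $bx$ or $xb$ with $x\in\mathcal{J}$, hence entries in $\mathcal{J}$ --- and norm-closedness is then automatic, as recalled in the excerpt (or because $i_\oplus$, being faithful, is an isometry onto a complete subspace). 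The induced $\ast$-functor $\mathcal{B}_\oplus/\mathcal{J}_\oplus\to\mathcal{A}_\oplus$ is a bijection on each morphism set by the previous paragraph, hence a $\ast$-isomorphism, and since faithful $\ast$-functors between $C^\ast$-categories are isometries (excerpt), both it and its inverse are isometric, so this identification respects the $C^\ast$-structures. Finally, if the categories are graded, the additive completion carries the grading defined entrywise and $i_\oplus$, $j_\oplus$ are graded because $i$, $j$ are, so the graded statement needs nothing extra.

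The argument is entirely routine; the one point I would pause on is that the norm on $\mathcal{A}_\oplus$ is a priori defined via a chosen faithful representation of $\mathcal{A}$ rather than as the quotient norm of $\mathcal{B}_\oplus$ modulo $\mathcal{J}_\oplus$. This is harmless for exactly the reason just used: the induced map from the quotient to $\mathcal{A}_\oplus$ is a bijective $\ast$-functor, and a bijective $\ast$-functor between $C^\ast$-categories is automatically isometric (it and its inverse are faithful), so the choice of representation is immaterial, just as for the spatial tensor product and for the additive completion itself.
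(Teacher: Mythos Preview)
Your proof is correct and follows the same approach as the paper's: verify injectivity of $i_\oplus$, surjectivity of $j_\oplus$, and $\ker j_\oplus=\mathrm{im}\,i_\oplus$ entrywise on matrices. The paper's argument is considerably terser---it simply notes that each of these three properties holds entry by entry because it holds for $i$ and $j$---so your additional paragraphs on the $C^\ast$-ideal structure of $\mathcal{J}_\oplus$ in $\mathcal{B}_\oplus$ and on the norm compatibility of the quotient are more than the paper provides (and more than the paper's Definition of short exact sequence strictly requires, since that definition asks only for short exactness of morphism sets as abelian groups), but they are correct and worth recording.
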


\begin{proof}
Let $a_i$ and $b_j$ be objects. Since the $\ast$-functor $i$ is injective at the level of objects, and 
$$i_{\oplus} \left( \begin{array}{ccc}
x_{1,1} & \cdots & x_{1,m} \\
\vdots & \ddots & \vdots \\
x_{n,1} & \cdots & x_{n,m} \\
\end{array} \right) =
\left( \begin{array}{ccc}
i (x_{1,1}) & \cdots & i (x_{1,m}) \\
\vdots & \ddots & \vdots \\
i (x_{n,1}) & \cdots & i (x_{n,m}) \\
\end{array} \right)  \qquad x_{i,j} \in \Hom (a_j , b_i )$$
it follows that $i_{\oplus}$ is also injective. Similarly, the $\ast$-functor $j_{\oplus}$ is surjective.

So it suffices to check that $\text{Ker} ~ j_{\oplus} = \text{Im} ~ i_{\oplus}$. Since $\text{Ker} ~ j = \text{Im} ~ i$, we have agreement in each term of the matrices. and the result follows. 
\end{proof}

\section{Composition}

Let $\mathcal A$, $\mathcal B$, and $\mathcal C$ be $C^\ast$-categories, and let $\varphi \colon {\mathcal A}\dashrightarrow {\mathcal B}$ and $\psi \colon {\mathcal B}\dashrightarrow {\mathcal C}$ be asymptotic functors. We would like to be able to define the composition $\psi \circ \varphi \colon {\mathcal A}\dashrightarrow {\mathcal C}$. Unfortunately, we cannot simply define $(\varphi \circ \psi )_t = \varphi_t \circ \psi_t$ for each value $t$ and expect an asymptotic morphism; this fails even in the $C^\ast$-algebra case.

Fortunately, there are cases where the obvious construction works, and a reparametrisation procedure which works more generally.

\begin{proposition}
Let $\alpha \colon {\mathcal A}\rightarrow {\mathcal B}$ be a $\ast$-functor, and let $\varphi \colon {\mathcal B}\rightarrow {\mathcal C}$ be an asymptotic functor. Then we have an asymptotic functor $\varphi \circ \alpha \colon {\mathcal A}\rightarrow {\mathcal C}$ defined by writing $(\varphi \circ \alpha )_t = \varphi_t \circ \alpha$ where $t\in [1,\infty )$.
\end{proposition}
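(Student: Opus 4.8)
The plan is to simply verify, one by one, the four limit conditions in the definition of an asymptotic functor for the family $(\varphi\circ\alpha)_t = \varphi_t\circ\alpha \colon \Hom(a,b)_{\mathcal A}\to\Hom(\varphi\alpha(a),\varphi\alpha(b))_{\mathcal C}$, together with the continuity and boundedness condition. The object map of $\varphi\circ\alpha$ is of course the set-level composite $\varphi\circ\alpha\colon\Ob({\mathcal A})\to\Ob({\mathcal C})$. The key structural observation, which makes each check immediate, is that $\alpha$ is a genuine $\ast$-functor and hence satisfies $\alpha(\lambda x+\mu y)=\lambda\alpha(x)+\mu\alpha(y)$, $\alpha(xy)=\alpha(x)\alpha(y)$, and $\alpha(x^\ast)=\alpha(x)^\ast$ \emph{exactly}, with no asymptotic error; so all the error terms that appear are precisely those already controlled by the asymptotic functor axioms for $\varphi$, evaluated at the fixed elements $\alpha(x)$, $\alpha(y)$ of $\mathcal B$.

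Concretely, I would argue as follows. Fix $x\in\Hom(a,b)_{\mathcal A}$. Since $\alpha(x)$ is a single fixed morphism of $\mathcal B$, the function $t\mapsto\varphi_t(\alpha(x))$ is continuous and bounded by the corresponding property of $\varphi$; this gives the first condition. For the linearity condition, let $x,y\in\Hom(a,b)_{\mathcal A}$ and $\lambda,\mu\in\F$; using linearity of $\alpha$ we rewrite
\[ \varphi_t(\alpha(\lambda x+\mu y))-\lambda\varphi_t(\alpha(x))-\mu\varphi_t(\alpha(y)) = \varphi_t(\lambda\alpha(x)+\mu\alpha(y))-\lambda\varphi_t(\alpha(x))-\mu\varphi_t(\alpha(y)), \]
whose norm tends to $0$ by the asymptotic linearity of $\varphi$. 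Similarly, for $x\in\Hom(b,c)_{\mathcal A}$ and $y\in\Hom(a,b)_{\mathcal A}$, multiplicativity of $\alpha$ gives $\alpha(xy)=\alpha(x)\alpha(y)$, so $\varphi_t(\alpha(xy))-\varphi_t(\alpha(x))\varphi_t(\alpha(y)) = \varphi_t(\alpha(x)\alpha(y))-\varphi_t(\alpha(x))\varphi_t(\alpha(y))\to 0$; and using $\alpha(x^\ast)=\alpha(x)^\ast$ we get $\varphi_t(\alpha(x^\ast))-\varphi_t(\alpha(x))^\ast = \varphi_t(\alpha(x)^\ast)-\varphi_t(\alpha(x))^\ast\to 0$. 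This exhausts the axioms.

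I do not expect any genuine obstacle here: the content is entirely that a strict $\ast$-functor contributes no asymptotic defect, so precomposition preserves the asymptotic-functor structure on the nose. The only point worth stating carefully is that $\alpha$ being norm-decreasing (as recalled earlier in the paper) is what keeps boundedness under control — although in fact boundedness already follows just from $\alpha(x)$ being a fixed element and $\varphi$ being asymptotic, so even that is not strictly needed. This should be contrasted with the general composition of two asymptotic functors, where the naive formula fails and a reparametrisation is required; the present proposition is the easy special case that will later be used to feed $\ast$-functors into asymptotic functors without invoking that machinery.
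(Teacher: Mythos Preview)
Your proposal is correct and follows essentially the same approach as the paper: the paper's proof simply records that $\alpha$ is linear, multiplicative, $\ast$-preserving, and norm-decreasing, and then declares the asymptotic-functor axioms ``easy to check'', which is exactly the verification you carry out explicitly. Your observation that the norm-decreasing property is not actually needed (since $\alpha(x)$ is a fixed element of $\mathcal B$) is a fair refinement of the paper's sketch.
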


\begin{proof}
As noted in \cite{Mitch2}, the $\ast$-functor $\alpha$ is norm-decreasing on each morphism set, as well as being linear, and compatible with the involution and composition in $\mathcal B$ and $\mathcal C$. The required properties of an asymptotic morphism are now easy to check.
\end{proof}

The following is similar.

\begin{proposition}
Let $\varphi \colon {\mathcal A}\dashrightarrow {\mathcal B}$ be an asymptotic functor, and let $\alpha \colon {\mathcal B}\rightarrow {\mathcal C}$ be a $\ast$-functor. Then we have an asymptotic functor $\alpha \circ \varphi \colon {\mathcal A}\dashrightarrow {\mathcal C}$ defined by writing $(\alpha \circ \varphi )_t = \alpha \circ \varphi_t$ where $t\in [1,\infty )$.
\end{proposition}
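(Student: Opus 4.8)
The plan is to verify directly the four defining properties of an asymptotic functor for the family $(\alpha \circ \varphi)_t = \alpha \circ \varphi_t$, using throughout the two facts recalled earlier in the paper: that a $\ast$-functor $\alpha$ is linear, multiplicative and $\ast$-preserving on each morphism set \emph{exactly} (not merely asymptotically), and that it is norm-decreasing, hence continuous on each morphism set.

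First I would record the object assignment $(\alpha \circ \varphi)\colon \Ob ({\mathcal A}) \to \Ob ({\mathcal C})$ and note that each $\alpha \circ \varphi_t$ does map $\Hom (a,b)_{\mathcal A}$ into $\Hom (\alpha\varphi (a), \alpha\varphi (b))_{\mathcal C}$, so the data are of the right shape. For the continuity and boundedness of $t \mapsto \alpha(\varphi_t (x))$, I would observe that this is the composite of the continuous bounded map $t \mapsto \varphi_t (x)$ with the continuous contraction $\alpha$: continuity is immediate, and boundedness follows from $\| \alpha(\varphi_t (x)) \| \leq \| \varphi_t (x) \|$.

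For the three limit conditions I would use the same two-line argument in each case: pull $\alpha$ out of the difference using its exact (bi)linearity, multiplicativity, or compatibility with the involution, then apply $\| \alpha(z) \| \leq \| z \|$ to bound the expression by the corresponding quantity for $\varphi$, which tends to $0$ by hypothesis. For instance, in the linearity case,
\[ \| \alpha(\varphi_t (\lambda x + \mu y)) - \lambda \alpha(\varphi_t (x)) - \mu \alpha(\varphi_t (y)) \| = \| \alpha(\varphi_t (\lambda x + \mu y) - \lambda \varphi_t (x) - \mu \varphi_t (y)) \| \leq \| \varphi_t (\lambda x + \mu y) - \lambda \varphi_t (x) - \mu \varphi_t (y) \|, \]
and identically with $xy$ against $\varphi_t (x)\varphi_t (y)$, and with $x^\ast$ against $\varphi_t (x)^\ast$.

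There is no real obstacle here: unlike the composition of two genuine asymptotic functors, which requires a reparametrisation, postcomposition by a $\ast$-functor works on the nose precisely because the $\ast$-functor contributes no error terms. The only point to be mildly careful about is to invoke the exact algebraic identities for $\alpha$ and the contraction property simultaneously; once that is noted, the verification is mechanical and parallels the preceding proposition.
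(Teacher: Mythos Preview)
Your proposal is correct and is precisely the argument the paper has in mind: the paper does not even write out a proof here, merely stating ``The following is similar'' with reference to the preceding proposition, whose proof already invokes that a $\ast$-functor is linear, multiplicative, $\ast$-preserving, and norm-decreasing. Your explicit verification is exactly the content of that remark.
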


We have set things up with definition \ref{separable} so that the proof of the following result is the same as the proof of theorem 25.3.1 of \cite{Black}.

\begin{theorem} \label{composition}
Let $\mathcal A$, $\mathcal B$, and $\mathcal C$ be separable $C^\ast$-categories. Let $\varphi \colon {\mathcal A}\dashrightarrow {\mathcal B}$ and $\psi \colon {\mathcal B}\dashrightarrow {\mathcal C}$ be asymptotic functors. Then there is a continuous increasing function $r\colon [1,\infty )\rightarrow [1,\infty )$ such that we have an asymptotic functor $\psi \circ_r \varphi \colon {\mathcal B}\dashrightarrow {\mathcal C}$ defined by the formula $(\psi \circ_r \varphi )_t = \psi_{r(t)} \circ \varphi_t$.

Further:

\begin{itemize}

\item If we have continuous functions $r,s\colon [1,\infty )\rightarrow [1,\infty )$ such that $\psi \circ_r \varphi$ is an asymptotic functor, and $s(t)\geq r(t)$ for all $t$, then $\psi\circ_s \varphi$ is an asymptotic functor.

\item The homotopy class of the asymptotic functor $\psi \circ_r \varphi$ is independent of $r$, and depends only on the homotopy classes of the asymptotic functors $\phi$ and $\varphi$. 

\end{itemize}

\end{theorem}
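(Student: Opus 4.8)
The plan is to follow the proof of Theorem 25.3.1 of \cite{Black}, now carried out inside the morphism Banach spaces of the categories; Definition \ref{separable} is set up precisely so that this goes through with only extra bookkeeping over objects. Write $C_t = \psi_{r(t)}\circ\varphi_t$ for the candidate composition $\mathcal A\dashrightarrow\mathcal C$. The argument splits into (i) the construction of $r$, which carries the real content, and then (ii) the two bulleted assertions, which become formal once (i) and the tensor-product construction for asymptotic functors are in hand.

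For (i), fix the compact exhaustions $\{k_n(a,b)\}$ of each $\Hom(a,b)_{\mathcal A}$ and of each $\Hom(a,b)_{\mathcal B}$ from separability. The elementary fact underneath everything is that an asymptotic functor is \emph{uniformly asymptotically a $\ast$-homomorphism on compact sets}: for each compact $K$ in a source morphism space and each $\varepsilon>0$ there is $T$ so that for all parameters $s\ge T$ the linearity, multiplicativity and $\ast$-defects of the $s$-th map are at most $\varepsilon$ over $K$, $K+K$, $K\cdot K$ and $K^\ast$, and moreover $\|\psi_s(y)\|\le\|y\|+\varepsilon$ for $y\in K$. One gets this from the pointwise asymptotic conditions by taking a finite $\varepsilon$-net of $K$ and using that, for large $s$, $\psi_s$ is approximately a contraction — itself a consequence of asymptotic linearity together with the asymptotic $C^\ast$-identity — to pass from the net points to all of $K$. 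Granting this, one builds $r$ interval by interval: on $[n,n+1]$ one picks the value of $r$ large enough that $\psi$'s defects and its norm inflation $\|\psi_{r(t)}(y)\|-\|y\|$ are at most $1/n$ on the bounded images $\varphi_t(k_n(a,b))$, on their products, and on the elements $\varphi_t(xy)-\varphi_t(x)\varphi_t(y)$, for $t\in[n,n+1]$ and the finitely many object-pairs relevant to a given morphism; interpolating and then passing to $t\mapsto t+\sup_{1\le u\le t}r(u)$ makes $r$ continuous and strictly increasing. Checking that $C_t$ is an asymptotic functor is then a three-term estimate for each defect; for multiplicativity, $C_t(xy)-C_t(x)C_t(y)$ is $\psi_{r(t)}\!\bigl(\varphi_t(xy)-\varphi_t(x)\varphi_t(y)\bigr)$, whose norm is eventually bounded by $\|\varphi_t(xy)-\varphi_t(x)\varphi_t(y)\|+\varepsilon\to0$ via approximate contractivity, plus $\psi_{r(t)}(\varphi_t(x)\varphi_t(y))-\psi_{r(t)}(\varphi_t(x))\psi_{r(t)}(\varphi_t(y))$, which is eventually small by the uniform defect estimate for $\psi$ on the bounded set containing $\varphi_t(x)$ and $\varphi_t(y)$. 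Continuity and boundedness of $t\mapsto C_t(x)$ follow from the corresponding properties of $r$ and of the parameter maps of $\varphi$ and $\psi$, exactly as in \cite{Black}.

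The first bullet is immediate: the uniform-on-compacts defect estimates for $\psi$ are statements about \emph{all} large parameters, so if they hold along $r(t)$ they hold along any $s(t)\ge r(t)$, and the three-term estimate goes through unchanged; hence $\psi\circ_s\varphi$ is an asymptotic functor. For the second bullet one assembles three homotopies. If $r$ and $r'$ both work, so does $r''=\max(r,r')$, and for each $u\in[0,1]$ the function $(1-u)r+ur''$ dominates $r$ and hence works; the family $(t,u)\mapsto\psi_{(1-u)r(t)+ur''(t)}(\varphi_t(x))$ is an asymptotic functor $\mathcal A\dashrightarrow I\mathcal C$ giving $\psi\circ_r\varphi\simeq\psi\circ_{r''}\varphi$, and symmetrically $\psi\circ_{r'}\varphi\simeq\psi\circ_{r''}\varphi$, so the class is independent of $r$. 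If $\Phi\colon\mathcal A\dashrightarrow I\mathcal B$ is a homotopy from $\varphi$ to $\varphi'$, form $\psi\otimes\id\colon I\mathcal B\dashrightarrow I\mathcal C$ via the earlier Proposition producing $\varphi\otimes\id$ from $\varphi$, enlarge $r$ if necessary so that the first part of the theorem applies to the pair $(\Phi,\psi\otimes\id)$, and note that the endpoint $\ast$-functors $E_0,E_1$ commute with $\psi\otimes\id$ up to equivalence of asymptotic functors; composing $(\psi\otimes\id)\circ_r\Phi$ with $E_0$ and $E_1$ and invoking Proposition \ref{Equivalent implies homotopic} yields $\psi\circ_r\varphi\simeq\psi\circ_r\varphi'$, and the symmetric argument with $\id\otimes\psi$ handles a homotopy in the $\psi$-variable. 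Chaining these, the homotopy class of $\psi\circ_r\varphi$ is independent of $r$ and depends only on the homotopy classes of $\varphi$ and $\psi$.

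The main obstacle is the construction of $r$ in step (i): one continuous increasing function must eventually meet infinitely many defect conditions at once, and since the morphism spaces need not be locally compact one cannot feed $\varphi_t(K)$ straight into the defect estimates for $\psi$ but must pass through finite $\varepsilon$-nets and the approximate contractivity of $\psi$. This is exactly the content of Theorem 25.3.1 of \cite{Black} — including the customary care with continuity of the reparametrised family — and the move to $C^\ast$-categories introduces nothing new beyond carrying the object labels along, since every estimate lives inside some Banach space $\Hom(-,-)$.
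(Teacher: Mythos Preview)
Your proposal is correct and follows exactly the route the paper takes: the paper does not give a self-contained argument but simply observes that Definition~\ref{separable} has been arranged so that the proof of Theorem~25.3.1 in \cite{Black} goes through verbatim, with the only change being that the estimates now live in the various morphism Banach spaces $\Hom(a,b)$ and one carries the object labels along. Your sketch is a faithful expansion of that reference, including the standard interval-by-interval construction of $r$ using the compact exhaustions $k_n(a,b)$, the monotonicity argument for the first bullet, and the $\max(r,r')$-plus-linear-interpolation trick together with the tensored homotopy $(\psi\otimes\id)\circ_r\Phi$ for the second; these are precisely the ingredients of Blackadar's proof, and nothing beyond bookkeeping is new in the categorical setting.
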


\section{Graded $E$-theory}
Here we give the definition of gradings and generalise the framework already set out to $C^\ast$-categories with gradings. Then we prove we have an abelian group structure on the homotopy class of asymptotic functors from $\mathcal{S} \widehat{\otimes} \mathcal A_{\oplus}$ to $B_\oplus \widehat{\otimes} \mathcal{K}$.

\begin{definition}
Let $\mathcal{A}$ be a $C^\ast$-category. A \emph{grading} on $\mathcal{A}$ is a $\ast$-functor $\delta_{\mathcal{A}} \colon \mathcal{A} \rightarrow \mathcal{A}$ such that for each object $a \in \mathcal{A}$, $\delta_{\mathcal{A}} (a) = a$, and ${\delta}_{\mathcal{A}}^2 = \text{id}$.
\end{definition}

For each morphism set $\Hom (a,b)_{\mathcal A}$ in a graded $C^\ast$-category we have a decomposition
\[ \Hom (a,b)_{\mathcal A} = \Hom (a,b)_{\mathcal A}^{\text{even}} \oplus \Hom (a,b)_{\mathcal A}^{\text{odd}}.\] 
where $x\in \Hom (a,b)_{\mathcal A}^{\text{even}}$ if $\delta_{\mathcal A} (x)=x$, and $x\in \Hom (a,b)_{\mathcal A}^{\text{odd}}$ if $\delta_{\mathcal A} (x)=-x$. We define the {\em degree}, $\deg (x)$, of a morphism $x$  to be $0$ if $x\in \Hom (a,b)_{\mathcal A}^{\text{even}}$ and $1$ if $x\in \Hom (a,b)_{\mathcal A}^{\text{odd}}$.

A $C^\ast$-category equipped with a grading is called a {\em graded} $C^\ast$-category. We can also consider graded $\ast$-functors and graded asymptotic functors.

A graded $C^\ast$-algebra can be considered the same thing as a graded $C^\ast$-category with one object. In our constructions, we make particular use of the following graded $C^\ast$-algebra.

\begin{definition}
We define $S$ to be the $C^\ast$-algebra $C_0(\R )$ equipped with the grading defined by saying an element $f\in C_0 (X)$ has degree $0$ if the function $f$ is even, and degree $1$ if the function $f$ is odd.
\end{definition}

We can realise the grading on $S$ by the $\ast$-homomorphism $\delta_S\colon S\rightarrow S$ defined by the formula $\delta (f)(x) = f(-x)$. We distinguish the graded $C^\ast$-algebra $S$ from $C_0 (\R )$, which as a graded $C^\ast$-algebra has the {\em trivial} grading, where ever element has degree $0$.

\begin{definition}
Let $\mathcal{A}$ and $\mathcal{B}$ be graded $C^\ast$-categories with gradings $\delta_{\mathcal{A}}$ and $\delta_{\mathcal{B}}$ respectively. 
A \emph{graded $\ast$-functor} $\alpha \colon \mathcal{A} \rightarrow \mathcal{B}$ is a $\ast$-functor such that for all $x \in \Hom (a,b)_{\mathcal A}$ we have $\delta_{\mathcal B} (\alpha (x)) = \alpha (\delta_{\mathcal A}(x))$.

A \emph{graded asymptotic functor} $\varphi = \varphi_t \colon \mathcal{A} \dashrightarrow \mathcal{B}$ is an asymptotic functor with the additional property that for all $x \in \Hom (a,b)_{\mathcal A}$ we have
\[ \lim_{t\rightarrow \infty} \| \delta_{\mathcal{B}}(\varphi_t (x)) -\varphi_t (\delta_{\mathcal{A}}(x)) \| = 0. \]
\end{definition}

For a morphism $x\in \Hom (a,b)_{\mathcal A}$ let us define the {\em degree} of $x$ by writing $\deg (x)=0$ if $x\in \Hom (a,b)_{\mathcal A}^{\text{even}}$, and $\deg (x)=1$ if $x\in \Hom (a,b)_{\mathcal A}^{\text{odd}}$. We call morphisms of degree $0$ even, and morphisms of degree $1$ odd.

\begin{definition}
Let $\mathcal A$ and $\mathcal B$ be graded $C^\ast$-categories, with gradings $\delta_{\mathcal{A}}$ and $\delta_{\mathcal{B}}$ respectively. Then we define the {\em graded tensor product} ${\mathcal A}\gtp {\mathcal B}$ to be the same collection of objects and Banach spaces of morphisms as the spatial tensor product ${\mathcal A}\otimes {\mathcal B}$, but with involution and composition law defined by the formulae
\begin{itemize}
\item $(x\otimes y)^\ast = (-1)^{\deg (x)\deg (y)}x^\ast \otimes y^\ast$;
\item $(u\otimes v)(x\otimes y) = (-1)^{\deg (v)\deg (x)} ux\otimes vy$,
\end{itemize}
and extending by linearity.
\end{definition}

With the above alternative composition law, the graded tensor product ${\mathcal A}\gtp {\mathcal B}$ is a $C^\ast$-category. We have a grading, $\delta$, defined by the formula
\[ \delta (x\otimes y) = \delta_{\mathcal A} (x)\otimes \delta_{\mathcal B}(y) \]
on elementary tensors; we again extend by linearity.

Let us equip $C[0,1]$ with the trivial grading, where every morphism is even. Then if $\mathcal A$ is a graded $C^\ast$-category, we can form the tensor product $I{\mathcal A} = {\mathcal A}\gtp C[0,1]$. as in the previous section, we have graded asymptotic $\ast$-functors $E_0,E_1 \colon I{\mathcal A}\rightarrow {\mathcal A}$, and we can form a notion of homotopy.

\begin{definition}
Let $\varphi ,\psi \colon {\mathcal A}\dashrightarrow {\mathcal B}$ be graded asymptotic functors. Then a {\em homotopy} between $\varphi$ and $\psi$ is a graded asymptotic functor $\theta \colon {\mathcal A}\dashrightarrow I{\mathcal B}$ such that $E_0\circ \theta = \varphi$ and $E_1\circ \theta = \psi$.
\end{definition}

Looking at objects, observe that
\begin{itemize}
\item $\theta_t(a) =\varphi_t(a)=\psi_t (a)$ for each object $a\in \Ob ({\mathcal A})$.
\item Given $x\in \Hom (a,b)_{\mathcal A}$. we have $\theta_t (x)(0) = \varphi_t (x)$ and $\theta_t (x)(1) = \psi_t (x)$ for all $t\in [1,\infty )$.
\end{itemize}

Being homotopic is an equivalence relation on the set of all graded asymptotic functors between graded $C^\ast$-categories $\mathcal A$ and $\mathcal B$. We denote the set of equivalence classes by $\llbracket {\mathcal A} , {\mathcal B} \rrbracket$. We also write $\llbracket {\mathcal A}_\oplus , {\mathcal B}_\oplus \rrbracket_\oplus$ to denote the set of homotopy equivalence classes of additive graded asymptotic functors. The obvious map
\[ \llbracket {\mathcal A} , {\mathcal B}_\oplus \rrbracket \rightarrow  \llbracket {\mathcal A}_\oplus , {\mathcal B}_\oplus \rrbracket_\oplus  \]
is a one to one correspondence.

We call a Hilbert space $H$ {\em graded} if we have a decomposition $H=H_0\oplus H_1$. If $H$ and $H'$ are graded Hilbert spaces, and $T\colon H\rightarrow H'$ is a bounded linear map, we call $T$ {\em even} if $T[H_0]\subseteq H'_0$ and $T[H_1]\subseteq H'_1$, and {\em odd} if $T[H_0]\subseteq H'_1$ and $T[H_1]\subseteq H'_0$. The odd and even bounded linear maps give us a decomposition
\[ \Hom (H,H')_{\mathcal L} = \Hom (H, H')_{\mathcal L}^{\text{even}} \oplus  \Hom (H, H')_{\mathcal L}^{\text{odd}} \]
on each morphism set on the $C^\ast$-category $\mathcal L$ made up of all graded separable Hilbert spaces and bounded linear maps. Hence we view $\mathcal L$ as a graded $C^\ast$-category. We have a non-unital graded $C^\ast$-subcategory $\mathcal K$ of all compact operators between graded separable Hilbert spaces.

Giiven  a graded Hilbert space $H=H_0\oplus H_1$, let $H^\mathrm{opp}$ be the Hilbert space we obtain by reversing the grading, that is to say $H=H_1\oplus H_0$.

Observe that we have natural graded $\ast$-isomorphisms
\[\mathcal{K}\oplus \mathcal{K} \rightarrow \mathcal{K} \qquad {\mathcal K}\gtp {\mathcal K}\rightarrow {\mathcal K} \]

Observe that ${\mathcal K}_\oplus = {\mathcal K}$, and so for any $C^\ast$-category $\mathcal A$ we have
\[ ({\mathcal A}\gtp {\mathcal K})_\oplus = {\mathcal A}_\oplus \gtp {\mathcal K} = {\mathcal A}\gtp {\mathcal K} \]

Now, observe that the algebra $S$ is generated by two functions $u,v\in S$ described by the formulae $u(x) = e^{-x^2}$ and $v(x) = x e^{-x^2}$ respectively. We define a graded $\ast$-homomorphism $\Delta \colon S\rightarrow S\gtp S$ by writing $\Delta (u) =u\otimes u$ and $\Delta (v) = u\otimes v + v\otimes u$. As shown in section 1.3 of 
\cite{GH04}, $\Delta \colon S\rightarrow S\gtp S$ is the unique $\ast$-homomorphism with the property
\[ \Delta (f) = f\otimes 1 + 1\otimes f \]
whenever $f$ has compact support.

\begin{lemma}
We have an abelian semigroup operation on the set $\llbracket \mathcal{A}_\oplus, {\mathcal B}_\oplus \gtp {\mathcal K} \rrbracket_\oplus$ defined by the formula $[(\varphi_t)]+[(\psi_t)] =[(\varphi_t) \oplus (\psi_t) ]$. Further, the abelian semigroup we define has an identity defined by taking the class of the zero asumptotic functor.
\end{lemma}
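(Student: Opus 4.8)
The plan is to verify in turn that the operation is well-defined on homotopy classes, that it is associative and commutative, and that the zero asymptotic functor furnishes an identity. The key tool throughout is the natural graded $\ast$-isomorphism ${\mathcal K}\oplus {\mathcal K}\rightarrow {\mathcal K}$ (equivalently, the fact that ${\mathcal K}_\oplus = {\mathcal K}$), which lets us reinterpret a direct sum of two asymptotic functors into ${\mathcal B}_\oplus\gtp{\mathcal K}$ as a single asymptotic functor into ${\mathcal B}_\oplus\gtp{\mathcal K}$ after composing with a fixed $\ast$-functor, and hence as a well-defined class in $\llbracket {\mathcal A}_\oplus, {\mathcal B}_\oplus\gtp{\mathcal K}\rrbracket_\oplus$.

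First I would check well-definedness. If $\varphi\simeq\varphi'$ via a homotopy $\theta\colon{\mathcal A}\dashrightarrow I{\mathcal B}_\oplus\gtp{\mathcal K}$ and $\psi\simeq\psi'$ via $\eta$, then $\theta\oplus\eta$ is an asymptotic functor into $I({\mathcal B}_\oplus\gtp{\mathcal K})\oplus I({\mathcal B}_\oplus\gtp{\mathcal K})$; since $I$ commutes with $\oplus$ (the interval tensor factor passes through matrices) and we then apply the identification ${\mathcal K}\oplus{\mathcal K}\cong{\mathcal K}$, we obtain a homotopy between $\varphi\oplus\psi$ and $\varphi'\oplus\psi'$. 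Evaluating at the endpoints via $E_0, E_1$ uses the fact that $E_i$ commutes with the additive completion and the fixed $\ast$-isomorphism, which is a routine diagram chase. For commutativity, the swap of the two summands is implemented by a permutation matrix, which is an even unitary $\ast$-functor ${\mathcal B}_\oplus\rightarrow{\mathcal B}_\oplus$; conjugating $\varphi\oplus\psi$ by it gives $\psi\oplus\varphi$ literally, and conjugation by a unitary $\ast$-functor yields a homotopic asymptotic functor (rotate the unitary to the identity along a path of unitaries in ${\mathcal B}_\oplus\gtp{\mathcal K}$, or more simply observe that after composing with ${\mathcal K}\oplus{\mathcal K}\cong{\mathcal K}$ the two permutations of the two ${\mathcal K}$-summands are unitarily equivalent through a rotation homotopy). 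Associativity is handled the same way: $(\varphi\oplus\psi)\oplus\chi$ and $\varphi\oplus(\psi\oplus\chi)$ differ only by the associativity isomorphism of $\oplus$, which is again an even unitary $\ast$-functor, so the two classes agree.

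For the identity statement, let $0$ denote the zero asymptotic functor ${\mathcal A}_\oplus\dashrightarrow{\mathcal B}_\oplus\gtp{\mathcal K}$ (sending every object to the zero object $0$ of $({\mathcal B}_\oplus\gtp{\mathcal K})$ — recall the empty formal sequence is allowed — and every morphism to the zero morphism). Then $\varphi\oplus 0$ has, at each object $a$, value $\varphi(a)\oplus 0$, which is canonically isomorphic to $\varphi(a)$ in $({\mathcal B}_\oplus\gtp{\mathcal K})_\oplus$ via the evident inclusion/projection, an even unitary; on morphisms $(\varphi\oplus 0)_t(x)$ is block-diagonal with the second block zero. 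Composing with this unitary $\ast$-functor identifies $\varphi\oplus 0$ with $\varphi$ up to the usual conjugation homotopy, so $[\varphi]+[0]=[\varphi]$; commutativity then gives $[0]+[\varphi]=[\varphi]$ as well.

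The main obstacle is the lemma, used implicitly above, that conjugation of an asymptotic functor by a (path-connected-to-identity) unitary $\ast$-functor produces a homotopic asymptotic functor. In the $C^\ast$-algebra case this is standard (rotation via a path of unitaries, using $2\times 2$ matrix tricks to connect any permutation unitary to the identity inside ${\mathcal K}$, or inside $M_2$ of the relevant algebra), and the argument transcribes to $C^\ast$-categories because the relevant unitaries live in $\Hom(b,b)_{{\mathcal B}_\oplus\gtp{\mathcal K}}$ for fixed objects $b$ and the asymptotic relations are checked morphism-set by morphism-set. I would state and prove this conjugation-invariance as a short preliminary lemma (or cite that the standard proof goes through verbatim, in the spirit of the remark preceding Theorem~\ref{composition}), and then the three properties above follow formally.
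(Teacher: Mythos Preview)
Your proposal is correct and follows essentially the same approach as the paper: both treat well-definedness, associativity, and the identity as routine, and both establish commutativity via a rotation homotopy connecting the swap permutation to the identity (the paper writes down the explicit matrices $R_\theta = \bigl(\begin{smallmatrix}\cos(\pi\theta/2) & -\sin(\pi\theta/2)\\ \sin(\pi\theta/2) & \cos(\pi\theta/2)\end{smallmatrix}\bigr)$, whereas you phrase it as a general conjugation-invariance lemma). Your treatment is in fact more careful than the paper's on the routine points, but the substantive step is identical.
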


\begin{proof}
We can take the direct sum of two additive graded asymptotic functors $\mathcal{A}_\oplus\dashrightarrow {\mathcal B}_\oplus \gtp {\mathcal K}$ in the obvious way. The resulting operation, defined above, is certainly well-defined and associative, and the the class of the zero asymptotic functor, mapping every object of ${\mathcal A}_\oplus$ to the zero object of ${\mathcal B}_\oplus$  is the identity.

So we must verify commutativity. In other words, given additive graded asymptotic morphisms $\varphi_t, \psi_t \colon {\mathcal A}_\oplus \dashrightarrow {\mathcal B}_\oplus \gtp {\mathcal K}$, we want to define an additive graded asymptotic homotopy between $\varphi_t \oplus \psi_t$ and $\psi_t\oplus \varphi_t$.

Let
\[ R_\theta = \left( \begin{array}{cc}
\cos (\pi \theta /2) & -\sin (\pi \theta /2) \\
\sin (\pi \theta /2) &  \cos (\pi \theta /2) \\
\end{array} \right)  \qquad \theta \in [0,1] \]

Then $R_\theta$ is a morphism in the category $\mathcal K$, so for elements $x,y\in \Hom (a,b)_{{\mathcal B})_\oplus}\otimes {\mathcal K}$ we can form the element $R_\theta (x\oplus y)R_\theta^\ast$.

Now $R_0 (x\oplus y)R_0^\ast = x\oplus y$ and 
\[ R_1 (x\oplus y)R_1^\ast = \left( 
\begin{array}{cc} 0 & -1 \\
1 & 0 \\
\end{array} \right) \left( \begin{array}{cc} x & 0 \\
0 & y \\
\end{array} \right) \left( \begin{array}{cc} 0 & 1 \\
-1 & 0 \\
\end{array} \right)
= y\oplus x \]
The
Hence, more generally we have a homotopy between asymptotic functors $(\varphi_t) \oplus (\psi_t)$ and $(\psi_t)\oplus (\varphi_t)$ which makes the semigroup $\llbracket \mathcal{A}_\oplus, {\mathcal B}_\oplus \gtp {\mathcal K} \rrbracket_\oplus$ commutative
\end{proof}

\begin{lemma}
The set $\llbracket {S} \widehat{\otimes} \mathcal A_{\oplus}  ,  B_\oplus \widehat{\otimes} \mathcal{K} \rrbracket_\oplus$ is an abelian group under the group operation defined above. 
\end{lemma}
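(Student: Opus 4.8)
The previous lemma already establishes that $\llbracket S \gtp \mathcal A_\oplus , \mathcal B_\oplus \gtp \mathcal K \rrbracket_\oplus$ is an abelian semigroup with identity (the class of the zero asymptotic functor), using the direct sum operation and the rotation homotopy $R_\theta$ for commutativity. So the only thing left is to produce inverses. The plan is to mimic the standard $C^\ast$-algebra argument: given an additive graded asymptotic functor $\varphi \colon S \gtp \mathcal A_\oplus \dashrightarrow \mathcal B_\oplus \gtp \mathcal K$, exhibit a second asymptotic functor $\bar\varphi$ such that $\varphi \oplus \bar\varphi$ is homotopic to the zero functor. The natural candidate for $\bar\varphi$ is obtained by conjugating $\varphi$ by the grading automorphism, or equivalently by precomposing with $\delta_S \gtp \id$ and postcomposing with the grading-reversal $\ast$-isomorphism on $\mathcal K$ (using $H \mapsto H^{\mathrm{opp}}$); concretely one sets $\bar\varphi_t = \sigma \circ \varphi_t \circ (\delta_S \gtp \id_{\mathcal A_\oplus})$ where $\sigma$ is the grading-reversing flip on $\mathcal B_\oplus \gtp \mathcal K$. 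Because $\delta_S$ and $\sigma$ are genuine graded $\ast$-functors (the latter after the identification $\mathcal K \oplus \mathcal K \cong \mathcal K$), Propositions on composing asymptotic functors with $\ast$-functors on either side guarantee $\bar\varphi$ is again an additive graded asymptotic functor.

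The heart of the argument is then to write down an explicit homotopy from $\varphi \oplus \bar\varphi$ to $0$. Here I would use the comultiplication $\Delta \colon S \to S \gtp S$ together with a standard ``rotation of two copies of the same map'' trick: the asymptotic functor $\varphi \oplus \bar\varphi$ factors (up to equivalence, hence up to homotopy by Proposition~\ref{Equivalent implies homotopic}) through a map on $S \gtp S \gtp \mathcal A_\oplus$, and on $S \gtp S$ there is a homotopy of $\ast$-homomorphisms, built from the one-parameter family of rotations $\cos(\pi\theta/2)\, u\otimes 1 + \sin(\pi\theta/2)\, 1 \otimes u$ acting on the generators, that connects the ``diagonal'' embedding $\Delta$ to a map whose image lies in a corner on which $\varphi \oplus \bar\varphi$ is visibly null. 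This is precisely the computation in section~1 of \cite{GH04} and in the corresponding $C^\ast$-algebra result; the key algebraic input is that $S$ is generated by $u$ and $v$ with $v = $ (the odd generator) and the relation $\Delta(f) = f \otimes 1 + 1 \otimes f$ for compactly supported $f$, which makes the rotation family pass through genuine graded $\ast$-homomorphisms. Tensoring the whole homotopy with $\id_{\mathcal A_\oplus}$ (via the Proposition on $\varphi \otimes \id$) and composing with $\varphi$ (via the composition Propositions, or Theorem~\ref{composition} if a reparametrisation is needed) transports it to the desired homotopy $\varphi \oplus \bar\varphi \simeq 0$.

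I expect the main obstacle to be bookkeeping rather than conceptual: one must check that every map used in the homotopy really is a \emph{graded} $\ast$-functor (the signs in the graded composition law $(u\otimes v)(x\otimes y) = (-1)^{\deg v \deg x} ux \otimes vy$ matter here, and the rotation-in-$S\gtp S$ family has to be verified to respect them), that the additive completion is compatible with all of this (which it is, since $\mathcal K_\oplus = \mathcal K$ and $(\mathcal A \gtp \mathcal K)_\oplus = \mathcal A \gtp \mathcal K$ as recorded above), and that the unital-vs-nonunital issue for $\mathcal K$ does not obstruct forming $R_\theta$ and $\sigma$ — but $\mathcal K$ admits the required finite-rank partial isometries, so this is fine. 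A secondary point to be careful about is that the homotopy should be through \emph{additive} asymptotic functors; since every construction is applied uniformly on matrix entries and the one-to-one correspondence $\llbracket \mathcal A , \mathcal B_\oplus \rrbracket \cong \llbracket \mathcal A_\oplus , \mathcal B_\oplus \rrbracket_\oplus$ is built to respect this, additivity is preserved throughout. With inverses in hand, the semigroup from the previous lemma becomes a group, proving the statement.
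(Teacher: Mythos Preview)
Your strategy matches the paper's: exhibit an inverse by twisting $\varphi$ with the grading, then use the comultiplication $\Delta$ on $S$ to build a null-homotopy of the sum. Two points where your write-up diverges from the paper and where you should tighten things.

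First, the paper's candidate inverse is simpler than yours. It takes $\varphi_t^{\mathrm{opp}} = \varphi_t \circ \delta$, where $\delta$ is the \emph{full} grading automorphism of $S \gtp \mathcal A_\oplus$, with no postcomposition by a grading-reversing $\sigma$ on the target. Your $\bar\varphi = \sigma \circ \varphi \circ (\delta_S \gtp \id)$ introduces an extra ingredient that is not needed and whose compatibility with the graded asymptotic functor axioms you would have to verify separately; since $\delta$ is itself a graded $\ast$-functor, $\varphi \circ \delta$ is immediately a graded asymptotic functor by the composition propositions, and no flip on $\mathcal K$ is required.

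Second, and more substantively, the homotopy in the paper is not a rotation inside $S \gtp S$ but an \emph{evaluation} homotopy. One sets
\[
\Phi_t^s(f \otimes x) \;=\; f(s)\,\bigl(\varphi_t(x) \oplus \varphi_t^{\mathrm{opp}}(x)\bigr),
\qquad f \in S,\ x \in \Hom(a,b)_{S \gtp \mathcal A_\oplus},
\]
and then defines $\theta_t(y)(s) = \Phi_t^{s/(1-s)}\bigl((\Delta \gtp \id)(y)\bigr)$ for $s \in [0,1)$; one checks on the generators $u$ and $v$ that this interpolates between $\varphi \oplus \varphi^{\mathrm{opp}}$ at $s=0$ and $0$ as $s \to 1$. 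Your proposed rotation family $\cos(\pi\theta/2)\, u\otimes 1 + \sin(\pi\theta/2)\, 1 \otimes u$ is problematic as written because $1 \notin S = C_0(\R)$, so those expressions do not lie in $S \gtp S$; if you intend a rotation of $\R^2$ pulled back through the identification $S \gtp S \cong C_0(\R^2)$, that can be made precise, but you would still need to say explicitly what the endpoint homomorphism is and why $\varphi \oplus \bar\varphi$ vanishes on its image. The evaluation picture avoids this entirely and makes the endpoint behaviour transparent.
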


\begin{proof}
By the above it suffices to check that we have inverses.

Let $\varphi_t \colon {S} \gtp \mathcal{A}_\oplus \dashrightarrow  \mathcal{B}_\oplus \gtp \mathcal{K}$ be a graded asymptotic additive functor, and let $\delta \colon S\gtp {\mathcal  A}_\oplus \rightarrow S\gtp {\mathcal  A}_\oplus$ be the grading, defined by taking the tensor product of the gradings on $S$ and on ${\mathcal A}_\oplus$. Define $\varphi_t^{\text{opp}} = \varphi_t \circ \delta$.

Let $s\geq 0$ be a fixed scalar, and define $\Phi_t^s \colon S\gtp S\gtp {\mathcal A}_\oplus\gtp {\mathcal H}\dashrightarrow {\mathcal B}_\oplus \gtp {\mathcal K}$ by:
\[ \Phi_t^s (a) = \varphi_t (a) \qquad a\in \Ob ({\mathcal A}_\oplus ) \]
and
\[ \Phi_t^s (f\otimes x) = f(x)(\varphi_t (x)\oplus \varphi_t^{\text{opp}}{(x)}) \qquad f\in S,\ x\in \Hom (a,b)_{S\gtp {\mathcal A}_\oplus} \]

Then $(\Phi_t^s)_{t\in [1,\infty )}$ is a graded asymptotic functor. Moreover, as $s\rightarrow \infty$ we have that $\Phi_t^s\rightarrow 0$. Observe
\[ \Phi_t^s (u\otimes x) = \varphi_t (x)\oplus \varphi_t^{\text{opp}}{(x)} \qquad \Phi_t^s (v\otimes x) = 0 \]
for the functions  $u(x) = e^{-x^2}$ and $v(x) = x e^{-x^2}$ which generate $S$.

The function $s\mapsto s/(1-s)$ is a monotone increasing bijection $[0,1)\rightarrow [0,\infty)$.  We have a graded $\ast$-homomorphism $\Delta \colon S\rightarrow S\gtp S$ defined by writing $\Delta (u)=u\otimes u$ and $\Delta (v) = u\otimes v+v\otimes u$. So we can define $\theta \colon {S} \gtp \mathcal{A}_\oplus\dashrightarrow I\mathcal{B}_\oplus \gtp \mathcal{K}$ by the formula.
\[ \theta (x ) = \Phi_t^{s/(1-s)} \Delta \gtp \text{id}_{A \widehat{\otimes} \mathcal{K}(\mathcal{H})} (x) \]

Observe
\begin{align*}
\Phi_t^{s/(1-s)}(\Delta \widehat{\otimes} \text{id}_{A \widehat{\otimes} \mathcal{K}(\mathcal{H})}) (u \widehat{\otimes} x) 
& = \Phi_t^{s/(1-s)}(\Delta(u) \widehat{\otimes} x) \\
& = \Phi_t^{s/(1-s)}( u  \widehat{\otimes} u \widehat{\otimes} x) \\
& = u(s) (\varphi_t \oplus \varphi_t^{\text{opp}})(u \widehat{\otimes} x),  
\end{align*}
and when $s=0$, $u(s) = e^{-0} = 1$ and so we obtain $\varphi_t \oplus \varphi_t^{\text{opp}}$ and when $s\rightarrow 1$, we obtain $0$. 

Now for $v$, 
\begin{align*}
\Phi_t^{s/(1-s)}(\Delta \widehat{\otimes} \text{id}_{A \widehat{\otimes} \mathcal{K}(\mathcal{H})}) (v \widehat{\otimes} x) 
& = \Phi_t^{s/(1-s)}(\Delta(v) \widehat{\otimes} x) \\
& = \Phi_t^{s/(1-s)}((u \widehat{\otimes} v + v \widehat{\otimes} u) \widehat{\otimes} x) \\ 
&  \sim \Phi_t^{s/(1-s)}((u \widehat{\otimes} v) \widehat{\otimes} x) + \Phi_t^s((v \widehat{\otimes} u) \widehat{\otimes} x) \\ 
& = u(s) (\varphi_t \oplus \varphi_t^{\text{opp}})(v \widehat{\otimes} x) +  v(s) (\varphi_t \oplus \varphi_t^{\text{opp}})(u \widehat{\otimes} x) ,\\ 
\end{align*}
where the equivalence is valid, since equivalent graded asymptotic morphisms are homotopic by Proposition~\ref{Equivalent implies homotopic}. Now at $s=0$, $v(s) = 0$ and as $s \to 1$, $v(s) \to 0$, so the second term is equal to $0$ at the end points, and hence we obtain the same endpoints as above and we are done. 
\end{proof}

\begin{definition}
For $C^\ast$-categories $\mathcal{A}$ and $\mathcal{B}$, we define the graded $E$-theory groups by 
\[E^n(\mathcal{A}, \mathcal{B}) = \llbracket S \widehat{\otimes} \mathcal{A}_\oplus \widehat{\otimes} \mathcal{K} , \Sigma^n \mathcal{B}_\oplus \widehat{\otimes} \mathcal{K} \rrbracket_\oplus \]
\end{definition}

As a special case we write
\[ E({\mathcal A},{\mathcal B}) = E^0({\mathcal A},{\mathcal B}) =  \llbracket S \widehat{\otimes} \mathcal{A}_\oplus \widehat{\otimes} \mathcal{K} , \mathcal{B}_\oplus \widehat{\otimes} \mathcal{K} \rrbracket_\oplus .\]

We can define a graded $\ast$-functor $c\colon S \rightarrow \F$ by evaluating a function at zero. Given an asympotic functor $\varphi \colon \mathcal{A}\dashrightarrow \mathcal{B}$ we have an equivalence class $[\alpha ] = [c \otimes \alpha_\oplus \otimes \id ] \in E^0 ({\mathcal A},{\mathcal B})$.

\section{The Product and Functoriality}

To explore the other properties of $E$-theory, we first set up an associative product. Let $\varphi \colon S\gtp {\mathcal A}_{\oplus}\gtp {\mathcal K} \dashrightarrow {\mathcal B}_{\oplus} \gtp {\mathcal K}$ and  $\psi \colon S\gtp {\mathcal B}_{\oplus} \gtp {\mathcal K} \dashrightarrow {\mathcal C}_{\oplus}\gtp {\mathcal K}$ be graded asymptotic functors. We have a canonical graded $\ast$-homomorphism $\Delta \colon S\rightarrow S\gtp S$. So by theorem \ref{composition} we have an increasing map $r\colon [1,\infty )\rightarrow [1,\infty )$ giving us an asymptotic functor $\psi \circ_r \varphi \colon S\gtp {\mathcal A}_{\oplus} \gtp {\mathcal K} \rightarrow {\mathcal C}_{\oplus}\gtp {\mathcal K}$
\[ S\gtp {\mathcal A}_{\oplus}\gtp {\mathcal K} \stackrel{\Delta \otimes \mathrm{id}}{\longrightarrow}
S\gtp S\gtp {\mathcal A}_{\oplus}\gtp {\mathcal K} \stackrel{ \mathrm{id} \otimes \varphi_t}{\longrightarrow}
S\gtp {\mathcal B}_{\oplus}\gtp {\mathcal K} \stackrel{ \psi_{r(t)}}{\longrightarrow}
{\mathcal C}_{\oplus}\gtp {\mathcal K} \]

Further, by theorem \ref{composition} the homotopy class of the asymptotic morphism $\psi \circ_r \varphi$ depends only on the homotopy classes $\psi$ and $\varphi$ and not on the map $r$, so we have a well-defined map of $E$-theory groups
\[ E({\mathcal A},{\mathcal B}) \times E({\mathcal B},{\mathcal C}) \rightarrow E({\mathcal A},{\mathcal C}) \]
given by the formula
\[ ([\varphi ] , [\psi ] ) \mapsto [\psi \circ_r \varphi ] \]

Note also that composition of maps is associative, so the above product is associative. To be precise, this means that the products
\[ (E({\mathcal A},{\mathcal B}) \times E({\mathcal B},{\mathcal C}) )\times E({\mathcal C},{\mathcal D}) \rightarrow  E({\mathcal A},{\mathcal C}) \times E({\mathcal C},{\mathcal D}) \rightarrow  E({\mathcal A},{\mathcal D}) \]
and
\[ E({\mathcal A},{\mathcal B}) \times ( E({\mathcal B},{\mathcal C}) \times E({\mathcal C},{\mathcal D})) \rightarrow  E({\mathcal A},{\mathcal B}) \times E({\mathcal B},{\mathcal D}) \rightarrow  E({\mathcal A},{\mathcal D}) \]
are equal.

If $\mathcal A$ and $\mathcal B$ are categories, a {\em bivariant functor}, $F$, from $\mathcal A$ to $\mathcal B$ is a functor $F\colon {\mathcal A}^\mathrm{op}\times {\mathcal A} \rightarrow {\mathcal B}$. In other words, for objects $a,b\in \Ob ({\mathcal A})$ we obtain an object $F(a,b)\in \Ob ({\mathcal B})$, which is a contravariant functor in the first variable and a covariant functor in the second variable.

\begin{lemma}[Functoriality]
$E$ is a bivariant functor from the category where objects are graded $C^\ast$-categories and morphisms are $\ast$-functors to the category where objects are abelian groups and morphisms are group homomorphisms. 
\end{lemma}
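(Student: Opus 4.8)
The plan is to verify the two functor axioms: that $E$ respects identities and that it respects composition (with the appropriate variance in each slot). Everything else---that $F(a,b) := E(a,b)$ is an abelian group, and that the product is associative---has already been established above, so what remains is to check that the product behaves functorially with respect to $\ast$-functors in each variable.

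First I would pin down how a $\ast$-functor induces a map on $E$-theory groups. Given a graded $\ast$-functor $\alpha \colon \mathcal{A} \to \mathcal{A}'$, one gets an induced graded $\ast$-functor $\id_S \gtp \alpha_\oplus \gtp \id_{\mathcal K} \colon S \gtp \mathcal{A}_\oplus \gtp \mathcal{K} \to S \gtp \mathcal{A}'_\oplus \gtp \mathcal{K}$, and precomposition with this sends a graded asymptotic functor out of $S \gtp \mathcal{A}'_\oplus \gtp \mathcal{K}$ to one out of $S \gtp \mathcal{A}_\oplus \gtp \mathcal{K}$; since precomposition with a $\ast$-functor preserves homotopy and respects direct sums, this gives a well-defined group homomorphism $\alpha^* \colon E(\mathcal{A}', \mathcal{B}) \to E(\mathcal{A}, \mathcal{B})$, contravariant in the first slot. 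Dually, a graded $\ast$-functor $\beta \colon \mathcal{B} \to \mathcal{B}'$ induces $\id \gtp \beta_\oplus \gtp \id \colon \mathcal{B}_\oplus \gtp \mathcal{K} \to \mathcal{B}'_\oplus \gtp \mathcal{K}$, and postcomposition gives a group homomorphism $\beta_* \colon E(\mathcal{A}, \mathcal{B}) \to E(\mathcal{A}, \mathcal{B}')$, covariant in the second slot. Here I would invoke the earlier propositions that composing an asymptotic functor with a $\ast$-functor on either side again yields an asymptotic functor, and that $(-)_\oplus$ is functorial, and that $\gtp$ is functorial in the obvious sense.

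Next I would check the identity axiom: if $\alpha = \id_{\mathcal A}$ then $\alpha^* = \id$ on $E(\mathcal{A}, \mathcal{B})$, since the induced $\ast$-functor $\id_S \gtp (\id_{\mathcal A})_\oplus \gtp \id_{\mathcal K}$ is the identity, and likewise for $\beta_*$; these are immediate from the strict functoriality of $(-)_\oplus$ and $\gtp$. Then the composition axiom: for $\ast$-functors $\mathcal{A} \xrightarrow{\alpha} \mathcal{A}' \xrightarrow{\alpha'} \mathcal{A}''$ one needs $(\alpha' \circ \alpha)^* = \alpha^* \circ (\alpha')^*$, which again reduces to strict functoriality since precomposition with a composite of $\ast$-functors is the composite of the precompositions; similarly $(\beta' \circ \beta)_* = \beta'_* \circ \beta_*$. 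Finally, since the $E$-theory product is defined via the reparametrised composition $\psi \circ_r \varphi$ of asymptotic functors, and composing a $\ast$-functor in or out commutes (up to homotopy) with this reparametrised composition---using Theorem~\ref{composition} that the homotopy class of $\psi \circ_r \varphi$ is independent of $r$---the maps $\alpha^*$ and $\beta_*$ are compatible with the product, so $E$ is genuinely a functor on $\mathcal{A}^{\mathrm{op}} \times \mathcal{A}$.

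I expect the only mildly delicate point to be the bookkeeping in the last step: verifying that $\beta_*$ applied to a product $[\psi \circ_r \varphi]$ equals the product computed after pushing $\beta$ through---this requires checking that $(\id \gtp \beta_\oplus \gtp \id) \circ (\psi \circ_r \varphi)$ is homotopic to $((\id \gtp \beta_\oplus \gtp \id)\circ \psi) \circ_{r'} \varphi$ for a suitable $r'$, which follows from Theorem~\ref{composition} together with the fact that a $\ast$-functor composed with an asymptotic functor is computed pointwise. All of this is routine once the induced maps are set up; there is no real analytic obstacle, only the need to be careful that every construction descends to homotopy classes.
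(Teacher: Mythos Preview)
Your proposal is correct and matches the paper's approach: define $\alpha^\ast$ and $\beta_\ast$ by pre- and post-composition with $\id_S \gtp \alpha_\oplus \gtp \id_{\mathcal K}$ and $\beta_\oplus \gtp \id_{\mathcal K}$ respectively, then verify the identity and composition axioms by a direct unwinding using functoriality of $(-)_\oplus$ and $\gtp$. The only difference is that your final two paragraphs on compatibility with the $E$-theory product are not needed for the lemma as stated---bivariant functoriality to abelian groups requires only the identity and composition axioms for $\alpha^\ast$, $\beta_\ast$ (the paper records product-compatibility separately, as a remark after the proof).
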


\begin{proof}
The identity property required for functors is clearly satisfied.

Let $\mathcal{A}, \mathcal{B}, \mathcal{C}, \mathcal{D}$ be graded $C^\ast$-categories. Let $\alpha \colon \mathcal{A} \rightarrow \mathcal{B}$ be a graded $\ast$-functor, then we have an abelian group $E(\mathcal{A},\mathcal{D}) = \llbracket S \widehat{\otimes} \mathcal{A}_{\oplus} \widehat{\otimes} \mathcal{K}, \mathcal{D}_{\oplus} \widehat{\otimes} \mathcal{K} \rrbracket_{\oplus}$ for all $\mathcal{A}$ and a group homomorphism $\alpha^{\ast}\colon E(\mathcal{B},\mathcal{D}) \rightarrow E(\mathcal{A},\mathcal{D})$ defined by $\alpha^{\ast}(\llbracket x\rrbracket) =\llbracket x . \alpha\rrbracket$, where $(x . \alpha)_t= x_t \circ (\text{id}_S \widehat{\otimes}\alpha_{\oplus}  \widehat{\otimes} \text{id}_{\mathcal{K}})$ for all $\llbracket x \rrbracket \in E(\mathcal{B},\mathcal{D})$. 

Now consider the composition of graded $\ast$-functors $\mathcal{A} \xrightarrow{\alpha} \mathcal{B} \xrightarrow{\beta} \mathcal{C}$ on a representative $x$ of $\llbracket x \rrbracket$, 
\begin{align*}
(\beta \circ \alpha)^{\ast}(x_t)
& = x_t \circ (\text{id}_{\mathcal{S}} \widehat{\otimes}(\beta_{\oplus} \circ \alpha_{\oplus}) \widehat{\otimes} \text{id}_{\mathcal{K}}) \\
& = x_t \circ (\text{id}_{\mathcal{S}} \widehat{\otimes} \beta_{\oplus}  \widehat{\otimes} \text{id}_{\mathcal{K}}) \circ (\text{id}_{\mathcal{S}} \widehat{\otimes} \alpha_{\oplus} \widehat{\otimes} \text{id}_{\mathcal{K}}) \\
& = \beta^{\ast}(x_t) \circ (\text{id}_{\mathcal{S}} \widehat{\otimes} \alpha_{\oplus} \widehat{\otimes} \text{id}_{\mathcal{K}}) \\
& = \alpha^{\ast}\beta^{\ast}(x_t) \\
& = (\alpha^\ast \circ \beta^{\ast})(x_t). \\
\end{align*}

Similarly, we have for each graded $C^\ast$-category $\mathcal{A}$, an abelian group $E(\mathcal{D},\mathcal{A}) =  \llbracket  S \widehat{\otimes} \mathcal{D}_{\oplus}\widehat{\otimes} \mathcal{K}, \mathcal{A}_{\oplus}\widehat{\otimes} \mathcal{K} \rrbracket_{\oplus}$ and for a graded $\ast$-functor $\alpha$, a group homomorphism $\alpha_{\ast} \colon E(\mathcal{D}, \mathcal{A}) \rightarrow E(\mathcal{D}, \mathcal{B})$ defined by $\alpha_{\ast}(\llbracket y\rrbracket) = \llbracket \alpha . y\rrbracket$, where $(\alpha . y)_t =(\alpha_{\oplus} \widehat{\otimes} \id_{\mathcal{K}} ) \circ y_t$ for all $\llbracket y \rrbracket \in E(\mathcal{D},\mathcal{A})$. Considering the composition of morphisms above and taking a representative $y$ of $\llbracket y \rrbracket \in E(\mathcal{D},\mathcal{A})$ we see that 
\begin{align*}
(\beta \circ \alpha)_{\ast}(y_t) 
& = ((\beta_{\oplus} \circ \alpha_{\oplus})  \widehat{\otimes}  \text{id}_{\mathcal{K}}) \circ y_t \\
& = ((\beta_{\oplus} \circ \alpha_{\oplus})  \widehat{\otimes} \text{id}_{\mathcal{K}}) \circ y_t \\
& = (\beta_{\oplus}  \widehat{\otimes}  \text{id}_{\mathcal{K}}) \circ (\alpha_{\oplus} \widehat{\otimes}  \text{id}_{\mathcal{K}}) \circ y_t \\
& = (\beta_{\oplus} \widehat{\otimes}  \text{id}_{\mathcal{K}}) \circ \alpha_{\ast}(y_t) \\
& = \beta_{\ast}\alpha_{\ast}(y_t) \\
& = (\beta_{\ast} \circ \alpha_{\ast})(y_t). \\
\end{align*}
\end{proof}

Note that the above functorially induced maps are a special case of the product. It follows that the product is a natural map, that is to say it commutes with the homomorphisms $\alpha^\ast$ and $\alpha_\ast$ arising from a $\ast$-functor $\alpha \colon {\mathcal A}\rightarrow {\mathcal B}$.

\section{Equivalences}

Let $\mathcal A$ and $\mathcal B$ be unital graded $C^\ast$-categories, and let $\varphi, \psi \colon {\mathcal A}\rightarrow {\mathcal B}$ be graded $\ast$-functors. As in \cite{Mitch2.5}, we say $\varphi$ and $\psi$ are {\em unitarily equivalent} if for each object $a\in \Ob ({\mathcal A})$ we have an even unitary element $U_a \in \Hom (\varphi (a),\psi (a))_{\mathcal B}$ such that for each element $x\in \Hom (a,b)_{\mathcal A}$ we have that $U_b \varphi (x) = \psi (x) U_a$.

\begin{definition}
We call a graded $\ast$-functor $\varphi \colon {\mathcal A}\rightarrow {\mathcal B}$ an {\em even unitary equivalence} if there is a $\ast$-functor $\psi \colon {\mathcal B}\rightarrow {\mathcal A}$ such that the compositions $\varphi \circ \psi$ and $\psi \circ \varphi$ are the unitarily equivalent to the identity $\ast$-functor.
\end{definition}

\begin{theorem}
Let $\mathcal A$, $\mathcal B$, and $\mathcal C$ be unital graded $C^\ast$-categories, and let $\varphi , \psi \colon {\mathcal B}\rightarrow {\mathcal C}$ be graded $\ast$-homomorphisms. Suppose that $\varphi$ and $\psi$ are unitarily equivalent. Then the functors $\psi_\ast , \varphi_\ast \colon E({\mathcal A},{\mathcal B})\rightarrow E({\mathcal A},{\mathcal C})$ are equal.
\end{theorem}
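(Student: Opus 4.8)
The plan is to reduce to producing a homotopy between two graded $\ast$-functors built out of $\varphi$ and $\psi$, and then to build that homotopy by the rotation trick already used to prove commutativity of the semigroup operation. For the reduction, note first that if $F,G\colon{\mathcal B}\to{\mathcal C}$ are graded $\ast$-functors and $\Theta\colon{\mathcal B}\to I{\mathcal C}$ is a homotopy between them, then for any additive graded asymptotic functor $\chi\colon S\gtp{\mathcal A}_\oplus\gtp{\mathcal K}\dashrightarrow{\mathcal B}_\oplus\gtp{\mathcal K}$ the composite $(\Theta_\oplus\gtp\id_{\mathcal K})\circ\chi$, viewed via associativity of the graded tensor product as an asymptotic functor into $I({\mathcal C}_\oplus\gtp{\mathcal K})$, is a homotopy between $(F_\oplus\gtp\id_{\mathcal K})\circ\chi$ and $(G_\oplus\gtp\id_{\mathcal K})\circ\chi$; hence homotopic graded $\ast$-functors induce equal maps on $E({\mathcal A},-)$. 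Since $\varphi$ and $\psi$ need not agree on objects (so no homotopy between them can exist), we pass to ${\mathcal C}_\oplus$: define graded $\ast$-functors $\widetilde\varphi,\widetilde\psi\colon{\mathcal B}\to{\mathcal C}_\oplus$ with common object map $a\mapsto\varphi(a)\oplus\psi(a)$ and morphism maps $\widetilde\varphi(x)=\varphi(x)\oplus 0$, $\widetilde\psi(x)=0\oplus\psi(x)$ (these are graded $\ast$-functors as the grading on ${\mathcal C}_\oplus$ is entrywise). Post-composition with a zero $\ast$-functor induces the zero map, and post-composition with a direct sum of $\ast$-functors induces the sum of the two induced maps — using the identifications $({\mathcal C}_\oplus)_\oplus\cong{\mathcal C}_\oplus$ and ${\mathcal K}\oplus{\mathcal K}\cong{\mathcal K}$ to identify $E({\mathcal A},{\mathcal C}_\oplus)$ with $E({\mathcal A},{\mathcal C})$ — so $\widetilde\varphi_\ast=\varphi_\ast$ and $\widetilde\psi_\ast=\psi_\ast$, and it suffices to show $\widetilde\varphi$ and $\widetilde\psi$ are homotopic.

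For each object $a$ and $\theta\in[0,1]$ put
\[ V^a_\theta=\begin{pmatrix}\cos(\pi\theta/2)\,1_{\varphi(a)} & -\sin(\pi\theta/2)\,U_a^\ast\\ \sin(\pi\theta/2)\,U_a & \cos(\pi\theta/2)\,1_{\psi(a)}\end{pmatrix}\in\Hom(\varphi(a)\oplus\psi(a),\varphi(a)\oplus\psi(a))_{{\mathcal C}_\oplus}. \]
As $U_a$ is an even unitary, $U_a^\ast U_a=1_{\varphi(a)}$ and $U_aU_a^\ast=1_{\psi(a)}$, and a direct computation shows each $V^a_\theta$ is an even unitary, with $V^a_0$ the identity and $V^a_1=\begin{pmatrix}0 & -U_a^\ast\\ U_a & 0\end{pmatrix}$. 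The assignment $x\mapsto\bigl(\theta\mapsto V^b_\theta\,\widetilde\varphi(x)\,(V^a_\theta)^\ast\bigr)$, for $x\in\Hom(a,b)_{\mathcal B}$, is a graded $\ast$-functor ${\mathcal B}\to I{\mathcal C}_\oplus$: multiplicativity follows on inserting $(V^b_\theta)^\ast V^b_\theta=1$, compatibility with the involution and the grading is immediate, and continuity in $\theta$ is clear. It equals $\widetilde\varphi$ at $\theta=0$, and at $\theta=1$ the relation $U_b\varphi(x)=\psi(x)U_a$ gives $V^b_1\,\widetilde\varphi(x)\,(V^a_1)^\ast=0\oplus U_b\varphi(x)U_a^\ast=0\oplus\psi(x)=\widetilde\psi(x)$. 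Hence $\widetilde\varphi$ and $\widetilde\psi$ are homotopic, and therefore $\varphi_\ast=\widetilde\varphi_\ast=\widetilde\psi_\ast=\psi_\ast$.

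The rotation homotopy itself is completely explicit, so the part of the argument deserving genuine attention is the reduction step: one must check that replacing $\varphi,\psi$ by $\widetilde\varphi,\widetilde\psi$, and using the absorption ${\mathcal K}\oplus{\mathcal K}\cong{\mathcal K}$ (together with permutation unitaries, available in ${\mathcal C}_\oplus\gtp{\mathcal K}$) to flatten and reorder direct sums, are compatible with the definition of $\varphi_\ast$, and that the induced map of a direct sum of $\ast$-functors is the sum of the induced maps — all routine once the additive-completion framework set up in the earlier sections is taken for granted.
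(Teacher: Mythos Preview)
Your proof is correct and follows essentially the same route as the paper: both stabilize $\varphi$ and $\psi$ into ${\mathcal C}_\oplus$ so that they share a common object map, then produce a homotopy by conjugating with a continuous path of unitaries built from the $U_a$'s via the rotation trick. The only cosmetic difference is that the paper stabilizes as $\varphi\oplus 1$ and $1\oplus\psi$ whereas you use $\varphi\oplus 0$ and $0\oplus\psi$; your choice is arguably cleaner since the two functors then agree on objects by construction and the reduction to $\varphi_\ast=\widetilde\varphi_\ast$ via the zero summand is transparent.
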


\begin{proof}
Let $U \in \Hom (c,d)_{\mathcal C}$ be a unitary element. Then as noted in \cite{Mitch2.5} we have a path of unitaries between the two elements
\[ \left( 
\begin{array}{cc}
0 & U^\ast  \\
U & 0 \\
\end{array} \right) \quad \textrm{and} \quad 1 \]
in the $C^\ast$-algebra $\Hom (c\oplus d,c\oplus d)_{\mathcal C}$.

Let $x\in \Hom (a,b)_{\mathcal B}$. Then
\[ \left( \begin{array}{cc}
0 & U_b^\ast  \\
U_b & 0 \\
\end{array} \right) \varphi (x)\oplus 1 \left( \begin{array}{cc}
0 & U_a^\ast  \\
U_a & 0 \\
\end{array} \right) = 1\oplus \psi (x) \]

Hence the maps $\varphi \oplus 1$ and $1\oplus \psi$ are homotopic on each morphism set via the above paths of unitaries. It follows that the additive $\ast$-functors $\varphi_\oplus , \phi_\oplus \colon {\mathcal B}_\oplus \rightarrow {\mathcal C}_\oplus$ are homotopic. The result now follows.
\end{proof}

The following is immediate.

\begin{corollary}
Let $\mathcal A$, $\mathcal B$, and $\mathcal C$ be unital graded $C^\ast$-categories, and let $\varphi \colon {\mathcal B} \rightarrow {\mathcal C}$ be a unitary equivalence. Then the map $\varphi_\ast \colon E({\mathcal A},{\mathcal B}) \rightarrow E({\mathcal A},{\mathcal C})$ is an isomorphism.
\end{corollary}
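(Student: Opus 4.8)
The plan is to reduce the Corollary directly to the Theorem that immediately precedes it, using only formal properties of the product (functoriality) already established in the paper.

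First, I would unpack the definition of an even unitary equivalence. By hypothesis, $\varphi \colon {\mathcal B} \rightarrow {\mathcal C}$ comes with a graded $\ast$-functor $\psi \colon {\mathcal C} \rightarrow {\mathcal B}$ such that $\psi \circ \varphi$ is unitarily equivalent to $\id_{\mathcal B}$ and $\varphi \circ \psi$ is unitarily equivalent to $\id_{\mathcal C}$. Next, I would invoke the functoriality lemma: the assignment ${\mathcal B} \mapsto E({\mathcal A},{\mathcal B})$ is a covariant functor via the maps $\alpha_\ast$, so $(\psi \circ \varphi)_\ast = \psi_\ast \circ \varphi_\ast$ and $(\varphi \circ \psi)_\ast = \varphi_\ast \circ \psi_\ast$, and moreover $(\id_{\mathcal B})_\ast = \id_{E({\mathcal A},{\mathcal B})}$ and $(\id_{\mathcal C})_\ast = \id_{E({\mathcal A},{\mathcal C})}$.

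Then comes the key input: the preceding Theorem says that unitarily equivalent graded $\ast$-functors induce the \emph{same} map on $E$-theory in the covariant variable. Applying this to the pair $\psi \circ \varphi$ and $\id_{\mathcal B}$ gives $(\psi \circ \varphi)_\ast = (\id_{\mathcal B})_\ast = \id$, hence $\psi_\ast \circ \varphi_\ast = \id_{E({\mathcal A},{\mathcal B})}$. Applying it to $\varphi \circ \psi$ and $\id_{\mathcal C}$ gives $\varphi_\ast \circ \psi_\ast = \id_{E({\mathcal A},{\mathcal C})}$. Together these exhibit $\varphi_\ast$ as a two-sided inverse to $\psi_\ast$, so $\varphi_\ast \colon E({\mathcal A},{\mathcal B}) \rightarrow E({\mathcal A},{\mathcal C})$ is an isomorphism with inverse $\psi_\ast$.

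There is essentially no obstacle here — this is the standard "functor sends isomorphisms to isomorphisms" argument, and the only thing to be slightly careful about is matching conventions: the Theorem is stated for $\ast$-functors ${\mathcal B} \rightarrow {\mathcal C}$ and the induced maps $\varphi_\ast, \psi_\ast$ on $E({\mathcal A}, -)$, which is exactly the setting of the Corollary, so no adjustment is needed. If one wanted to be thorough one might remark that a unitary equivalence between $\ast$-functors into a non-unital category such as ${\mathcal C}_\oplus \gtp {\mathcal K}$ is handled by the same unitary-path trick used in the proof of the Theorem (passing to the additive completion, where the relevant $2\times 2$ unitaries live), but since we are composing $\ast$-functors between unital graded $C^\ast$-categories before applying the $E$-theory functor, this is already covered. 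Hence the proof is a one-line deduction once functoriality and the preceding Theorem are in hand.
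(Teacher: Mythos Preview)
Your proposal is correct and is exactly the argument the paper has in mind: the paper states the Corollary with the remark ``The following is immediate,'' and the immediate deduction is precisely the standard ``functor sends isomorphisms to isomorphisms'' argument you wrote out, using functoriality of $E({\mathcal A},-)$ together with the preceding Theorem that unitarily equivalent $\ast$-functors induce equal maps.
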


We have a similar result in the other variable.

\begin{proposition}
Let $\mathcal A$, $\mathcal B$, and $\mathcal C$ be graded $C^\ast$-categories, and let $\varphi , \psi \colon {\mathcal A}\rightarrow {\mathcal B}$ be graded $\ast$-homomorphisms. Suppose that $\varphi$ and $\psi$ are unitarily equivalent. Then the functors $\psi^\ast , \varphi^\ast \colon E({\mathcal B},{\mathcal C})\rightarrow E({\mathcal A},{\mathcal C})$ are equal.
\end{proposition}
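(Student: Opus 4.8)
The statement is the contravariant counterpart of the theorem proved just above, and the plan is to mimic that proof, replacing postcomposition by precomposition. By the description of $\varphi^\ast$ and $\psi^\ast$ in the functoriality lemma, a class $\llbracket x \rrbracket \in E({\mathcal B},{\mathcal C})$ represented by a graded asymptotic functor $x \colon S \gtp {\mathcal B}_\oplus \gtp {\mathcal K} \dashrightarrow {\mathcal C}_\oplus \gtp {\mathcal K}$ satisfies $\varphi^\ast \llbracket x \rrbracket = \llbracket x \circ (\id_S \gtp \varphi_\oplus \gtp \id_{\mathcal K}) \rrbracket$ and $\psi^\ast \llbracket x \rrbracket = \llbracket x \circ (\id_S \gtp \psi_\oplus \gtp \id_{\mathcal K}) \rrbracket$. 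So it suffices to prove that the additive graded $\ast$-functors $\id_S \gtp \varphi_\oplus \gtp \id_{\mathcal K}$ and $\id_S \gtp \psi_\oplus \gtp \id_{\mathcal K}$ from $S \gtp {\mathcal A}_\oplus \gtp {\mathcal K}$ to $S \gtp {\mathcal B}_\oplus \gtp {\mathcal K}$ are homotopic, and then to observe that precomposing a fixed asymptotic functor with homotopic $\ast$-functors does not change its homotopy class.

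For the first point I would first show that $\varphi_\oplus$ and $\psi_\oplus$ are homotopic as additive graded $\ast$-functors ${\mathcal A}_\oplus \to {\mathcal B}_\oplus$. This is exactly the argument in the proof of the preceding theorem with the pair $({\mathcal A},{\mathcal B})$ in place of $({\mathcal B},{\mathcal C})$: for objects $a,b$ and the even unitaries $U_a \in \Hom(\varphi(a),\psi(a))_{\mathcal B}$, $U_b \in \Hom(\varphi(b),\psi(b))_{\mathcal B}$ implementing the unitary equivalence one has
\[ \left( \begin{array}{cc} 0 & U_b^\ast \\ U_b & 0 \end{array} \right) \left( \varphi(x) \oplus 1 \right) \left( \begin{array}{cc} 0 & U_a^\ast \\ U_a & 0 \end{array} \right) = 1 \oplus \psi(x) \qquad (x \in \Hom(a,b)_{\mathcal A}), \]
and the path of even unitaries connecting the antidiagonal unitary to $1$ in the appropriate corner $C^\ast$-algebra of ${\mathcal B}_\oplus$ provides a homotopy $G \colon {\mathcal A}_\oplus \dashrightarrow I{\mathcal B}_\oplus$ with $E_0 \circ G = \varphi_\oplus$ and $E_1 \circ G = \psi_\oplus$. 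Tensoring $G$ on the left by $\id_S$ and on the right by $\id_{\mathcal K}$, and using the symmetry of the graded tensor product (the factor $C[0,1]$ carries the trivial grading, so no signs appear) to identify $S \gtp {\mathcal B}_\oplus \gtp C[0,1] \gtp {\mathcal K}$ with $I(S \gtp {\mathcal B}_\oplus \gtp {\mathcal K})$, we obtain a homotopy between $\id_S \gtp \varphi_\oplus \gtp \id_{\mathcal K}$ and $\id_S \gtp \psi_\oplus \gtp \id_{\mathcal K}$.

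It then remains to note the general fact: if $\alpha_0, \alpha_1 \colon {\mathcal D}\to {\mathcal E}$ are homotopic graded $\ast$-functors, via $H \colon {\mathcal D}\to I{\mathcal E}$, and $y \colon {\mathcal E}\dashrightarrow {\mathcal F}$ is a graded asymptotic functor, then $y\circ \alpha_0$ and $y\circ \alpha_1$ are homotopic; indeed $(y\otimes \id_{C[0,1]})\circ H \colon {\mathcal D}\dashrightarrow I{\mathcal F}$ is a graded asymptotic functor by the proposition on $\varphi\otimes\id$ together with the fact (proved just before Theorem~\ref{composition}) that a $\ast$-functor followed by an asymptotic functor is asymptotic, and applying $E_0$ and $E_1$ recovers $y\circ \alpha_0$ and $y\circ \alpha_1$. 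Applying this with $y = x$, $\alpha_0 = \id_S\gtp \varphi_\oplus \gtp \id_{\mathcal K}$ and $\alpha_1 = \id_S\gtp \psi_\oplus \gtp \id_{\mathcal K}$ finishes the proof. The only real work is the tensor-factor bookkeeping in the middle paragraph — keeping the $C[0,1]$-factor in the slot that the functors $I(\,\cdot\,)$, $E_0$ and $E_1$ refer to — and this is routine; alternatively one can phrase the whole argument as the equality $[\varphi]=[\psi]$ in $E({\mathcal A},{\mathcal B})$, which is immediate from $\varphi_\oplus\simeq\psi_\oplus$, combined with the remark that the functorial maps $\varphi^\ast$ and $\psi^\ast$ are given by left multiplication by these classes in the $E$-theory product.
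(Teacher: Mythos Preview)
Your proposal is correct and is exactly the ``similar'' argument the paper gestures at: you reproduce the rotation-of-unitaries homotopy from the preceding theorem to get $\varphi_\oplus \simeq \psi_\oplus$, then tensor up and precompose. The only cosmetic point is that your homotopy $G$ is a genuine $\ast$-functor, so a solid arrow would be more appropriate than $\dashrightarrow$, and your final remark about $[\varphi]=[\psi]$ in $E({\mathcal A},{\mathcal B})$ via the product is a clean way to summarise the whole thing.
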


\begin{corollary}
Let $\mathcal A$, $\mathcal B$, and $\mathcal C$ be graded $C^\ast$-categories, and let $\varphi \colon {\mathcal B} \rightarrow {\mathcal C}$ be a unitary equivalence. Then the map $\varphi^\ast \colon E({\mathcal B},{\mathcal C}) \rightarrow E({\mathcal A},{\mathcal C})$ is an isomorphism.
\end{corollary}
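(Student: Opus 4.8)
The plan is to mirror the proof in the other variable, namely the proof that unitarily equivalent $\ast$-functors $\varphi,\psi\colon\mathcal B\to\mathcal C$ induce the same map $\varphi_\ast=\psi_\ast$, but now to show that unitarily equivalent $\varphi,\psi\colon\mathcal A\to\mathcal B$ induce the same map in the contravariant variable. Recall that $\varphi^\ast\colon E(\mathcal B,\mathcal C)\to E(\mathcal A,\mathcal C)$ is defined on a representative $x$ by $(x.\varphi)_t = x_t\circ(\id_S\gtp\varphi_\oplus\gtp\id_{\mathcal K})$, and similarly for $\psi$. So it suffices to show that the additive graded $\ast$-functors $\id_S\gtp\varphi_\oplus\gtp\id_{\mathcal K}$ and $\id_S\gtp\psi_\oplus\gtp\id_{\mathcal K}$ from $S\gtp\mathcal A_\oplus\gtp\mathcal K$ to $S\gtp\mathcal B_\oplus\gtp\mathcal K$ are homotopic as $\ast$-functors; precomposition with homotopic $\ast$-functors yields homotopic asymptotic functors, hence the same class in $E(\mathcal A,\mathcal C)$.

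First I would reduce to showing $\varphi_\oplus$ and $\psi_\oplus$ are homotopic: tensoring a homotopy of $\ast$-functors $\mathcal A_\oplus\to\mathcal B_\oplus$ with $\id_S$ and $\id_{\mathcal K}$ produces a homotopy of the tensored $\ast$-functors, using that tensoring preserves homotopies (via the $\ast$-homomorphisms $E_0,E_1$, which are themselves tensor products). Then, exactly as in the earlier theorem, given the even unitary $U_a\in\Hom(\varphi(a),\psi(a))_{\mathcal B}$ intertwining $\varphi$ and $\psi$, I would invoke the fact from \cite{Mitch2.5} that in the $C^\ast$-algebra $\Hom(c\oplus d,c\oplus d)_{\mathcal C}$ there is a norm-continuous path of unitaries from $\begin{pmatrix}0&U^\ast\\U&0\end{pmatrix}$ to $1$. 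This gives, for each $a$, a path of even unitaries in $\Hom((\varphi(a)\oplus\psi(a)),(\varphi(a)\oplus\psi(a)))_{\mathcal B}$ conjugation by which carries $\varphi(x)\oplus 1$ to $1\oplus\psi(x)$ for every $x\in\Hom(a,b)_{\mathcal A}$, exhibiting a homotopy (through $\ast$-functors into $C[0,1]\gtp\mathcal B_\oplus$) between $\varphi_\oplus$ and $\psi_\oplus$ after adding a constant functor. Absorbing the added constant summand is harmless at the level of additive $\ast$-functors into $\mathcal B_\oplus$, since $\mathcal B_{\oplus\oplus}=\mathcal B_\oplus$.

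The step I expect to require the most care is verifying that the path of unitaries really does yield a \emph{graded} homotopy compatible with the tensor factor $S$ and with $\mathcal K$: I must check that the unitaries can be chosen even (which follows because $U_a$ is even, so the off-diagonal matrix is even and lies in the degree-zero part, and the connecting path in $\Hom(c\oplus d,c\oplus d)^{\text{even}}$ stays even), and that conjugation commutes appropriately with the $S$- and $\mathcal K$-tensor factors (it does, since the unitaries live in the $\mathcal B_\oplus$ factor and act by $1_S\gtp(-)\gtp 1_{\mathcal K}$-conjugation). Once the homotopy $\varphi_\oplus\simeq\psi_\oplus$ is established, tensoring and precomposing is formal, and equality of $\varphi^\ast$ and $\psi^\ast$ on every class $\llbracket x\rrbracket\in E(\mathcal B,\mathcal C)$ follows immediately. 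The corollary that a unitary equivalence induces an isomorphism $\varphi^\ast$ is then immediate by applying this to the two compositions $\varphi\circ\psi$ and $\psi\circ\varphi$, each unitarily equivalent to an identity $\ast$-functor, together with functoriality.
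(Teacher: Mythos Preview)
Your proposal is correct and follows essentially the same approach as the paper. In fact, the paper gives no explicit proof of this corollary at all: it is stated as immediate from the preceding proposition (that unitarily equivalent $\ast$-functors induce the same map $\varphi^\ast=\psi^\ast$), and that proposition in turn is left unproved with the remark that it is ``similar'' to the covariant theorem. Your sketch supplies exactly the argument the paper leaves implicit---adapting the rotation-of-unitaries trick from the covariant theorem to show $\varphi_\oplus$ and $\psi_\oplus$ are homotopic, then tensoring and precomposing---and then deduces the corollary by functoriality applied to $\varphi\circ\psi$ and $\psi\circ\varphi$.
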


\begin{definition}
Let $\mathcal A$ be a $C^\ast$-category. We call $\mathcal A$ {\em additive} if:

\begin{itemize}

\item We have a {\em zero object} $0\in \Ob ({\mathcal A})$ such that for any object $a\in \Ob ({\mathcal A})$ we have unique morphisms $0\in \Hom (0,a)$ and $0\in \Hom (a,0)$.

\item For any two objects $a,b\in \Ob ({\mathcal A})$ we have an object $a\oplus b$ equipped with morphisms $i_a \in \Hom (a,a\oplus b)$, $i_b\in \Hom (b,a\oplus b)$, $p_a\in \Hom (a\oplus b,a)$, and $p_b\in \Hom (a\oplus b,b)$ such that $p_ai_a =1_a$, $p_bi_b=1_b$, and $i_ap_a+i_bp_b =1_{a\oplus b}$.

\end{itemize}

\end{definition}

It is clear from the definition that for any $C^\ast$-category $\mathcal A$, the additive completion ${\mathcal A}_\oplus$ is additive.

\begin{proposition}
Let $\mathcal A$ be a unital additive $C^\ast$-category. Then ${\mathcal A}_\oplus$ is unitarily equivalent to $\mathcal A$.
\end{proposition}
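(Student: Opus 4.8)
The plan is to exhibit $\mathcal A$ as a copy of $\mathcal A_\oplus$ up to even unitary equivalence. Let $\iota\colon\mathcal A\rightarrow\mathcal A_\oplus$ be the canonical $\ast$-functor sending an object $a$ to the length-one sequence $(a)$ and a morphism $x$ to the $1\times 1$ matrix $(x)$. I would build a $\ast$-functor $\varphi\colon\mathcal A_\oplus\rightarrow\mathcal A$ going the other way out of the internal biproducts of $\mathcal A$, arrange that $\varphi\circ\iota=\id_{\mathcal A}$ exactly, and then produce an explicit even unitary equivalence between $\iota\circ\varphi$ and $\id_{\mathcal A_\oplus}$; by the definition of an even unitary equivalence this shows $\iota$ (equivalently $\varphi$) is one, hence $\mathcal A_\oplus$ is unitarily equivalent to $\mathcal A$.

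The one genuine obstacle is that the definition of an additive $C^\ast$-category only supplies, for objects $a,b$, a biproduct $a\boxplus b$ with structure morphisms $i_a,i_b,p_a,p_b$ satisfying $p_ai_a=1_a$, $p_bi_b=1_b$, $i_ap_a+i_bp_b=1_{a\boxplus b}$, and these need not be compatible with the involution. So the first step is to replace this data by a biproduct structure with $p_a=i_a^\ast$ and $p_b=i_b^\ast$ (equivalently, $i_a$ and $i_b$ are isometries with mutually orthogonal ranges summing to $1$). To do this, note that $e:=i_ap_a$ is an idempotent in the unital $C^\ast$-algebra $\Hom(a\boxplus b,a\boxplus b)_{\mathcal A}$; by standard $C^\ast$-algebra theory (cf. \cite{Black}) there is an invertible $g$ in this algebra with $geg^{-1}$ a projection, and replacing $(i_a,p_a,i_b,p_b)$ by $(gi_a,\,p_ag^{-1},\,gi_b,\,p_bg^{-1})$ preserves the three biproduct identities while making $i_ap_a$ and $i_bp_b$ complementary projections. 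Since $p_ai_a=1_a$, the positive element $i_a^\ast i_a\in\Hom(a,a)_{\mathcal A}$ dominates a positive multiple of $1_a$ (apply a faithful representation), hence is invertible; a further replacement of $i_a$ by $i_a(i_a^\ast i_a)^{-1/2}$ and of $p_a$ by its adjoint --- and likewise for $b$ --- gives the desired $\ast$-compatible biproduct. Fixing such a choice for every pair and nesting them, each object $\bar a=a_1\oplus\cdots\oplus a_n$ of $\mathcal A_\oplus$ produces an object $\sigma(\bar a)=a_1\boxplus\cdots\boxplus a_n$ of $\mathcal A$ together with morphisms $\iota^{\bar a}_k\colon a_k\rightarrow\sigma(\bar a)$ and $\pi^{\bar a}_k:=(\iota^{\bar a}_k)^\ast\colon\sigma(\bar a)\rightarrow a_k$ with $\pi^{\bar a}_k\iota^{\bar a}_l=\delta_{kl}1_{a_k}$ and $\sum_k\iota^{\bar a}_k\pi^{\bar a}_k=1_{\sigma(\bar a)}$, where $\sigma(\emptyset)=0$ and $\sigma((a))=a$ with $\iota^{(a)}_1=\pi^{(a)}_1=1_a$.

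Next I would set $\varphi(\bar a)=\sigma(\bar a)$ on objects and, on a morphism $(x_{ij})\colon\bar a\rightarrow\bar b$ of $\mathcal A_\oplus$,
\[ \varphi\bigl((x_{ij})\bigr)=\sum_{i,j}\iota^{\bar b}_i\,x_{ij}\,\pi^{\bar a}_j, \]
an element of $\Hom(\sigma(\bar a),\sigma(\bar b))_{\mathcal A}$. Linearity is immediate; preservation of composition uses $\pi^{\bar b}_j\iota^{\bar b}_k=\delta_{jk}1$; preservation of the involution uses $(\pi^{\bar a}_j)^\ast=\iota^{\bar a}_j$ (so that $\varphi(x)^\ast$ and $\varphi(x^\ast)$ agree once one transposes the matrix of $x$); unitality uses $\sum_k\iota^{\bar a}_k\pi^{\bar a}_k=1$; and $\varphi$ is automatically norm-decreasing, being a $\ast$-functor. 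With the length-one convention above, $\varphi\bigl(\iota(x)\bigr)=1_b\,x\,1_a=x$, so $\varphi\circ\iota=\id_{\mathcal A}$, which is in particular unitarily equivalent to $\id_{\mathcal A}$ via the identity unitaries.

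Finally, for $\bar a=a_1\oplus\cdots\oplus a_n$ let $U_{\bar a}\in\Hom\bigl((\sigma(\bar a)),\bar a\bigr)_{\mathcal A_\oplus}$ be the column matrix with entries $\pi^{\bar a}_1,\dots,\pi^{\bar a}_n$. Then $U_{\bar a}^\ast U_{\bar a}=\sum_k\iota^{\bar a}_k\pi^{\bar a}_k=1_{\sigma(\bar a)}$ and $U_{\bar a}U_{\bar a}^\ast=(\pi^{\bar a}_i\iota^{\bar a}_j)_{i,j}=1_{\bar a}$, so $U_{\bar a}$ is an even unitary from $(\iota\circ\varphi)(\bar a)=(\sigma(\bar a))$ to $\bar a$. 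A direct matrix computation gives $U_{\bar b}\,(\iota\circ\varphi)(x)=x\,U_{\bar a}$ for every morphism $x=(x_{ij})\colon\bar a\rightarrow\bar b$ --- both sides are the column whose $i$-th component is $\sum_j x_{ij}\,\pi^{\bar a}_j$ --- so $\{U_{\bar a}\}$ is a unitary equivalence $\iota\circ\varphi\simeq\id_{\mathcal A_\oplus}$, completing the proof. The only part needing real work is the $\ast$-compatibilisation of the biproducts in the second paragraph; everything else is bookkeeping with matrices. (In the graded setting one takes the internal biproducts to be graded, with even structure morphisms; the idempotent $e$ is then even and all the replacements above stay within the even part, so the $U_{\bar a}$ are even unitaries as required.)
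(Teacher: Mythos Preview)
The paper states this proposition without proof, so there is no argument to compare against. Your proof is correct and is exactly the natural one: embed $\mathcal A$ in $\mathcal A_\oplus$ via length-one sequences, use the internal biproducts of $\mathcal A$ to collapse formal sums back down, and exhibit the unitary natural isomorphism by the column of projection maps. The one nontrivial point you correctly isolate is that the biproduct structure maps supplied by the bare definition of ``additive'' need not be isometries, so the column matrix $U_{\bar a}$ is not a priori unitary; your two-step fix (conjugate the idempotent $i_ap_a$ to a projection, then polar-correct $i_a$ to an isometry) is the standard remedy and your verification that the biproduct identities survive both replacements is sound. Your parenthetical on the graded case is appropriate: one needs the structure maps to be even so that $U_{\bar a}$ is an even unitary, and since the grading automorphism fixes objects it sends biproduct data to biproduct data, so the even part of the structure maps (or an averaging argument) again gives biproduct data; the idempotent-to-projection and polar steps then take place entirely in the even $C^\ast$-algebra $\Hom(a\boxplus b,a\boxplus b)^{\mathrm{even}}$. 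Nothing is missing.
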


In particular, the category $({\mathcal A}_\oplus )_\oplus$ is unitarily equivalent to $\mathcal A$. Moreover, the $C^\ast$-category $\mathcal K$ is additive, so for any $C^\ast$-category $\mathcal B$ the categories ${\mathcal B}\gtp {\mathcal K}$, ${\mathcal B}_\oplus \gtp {\mathcal K}$, and $({\mathcal B}\gtp {\mathcal K})_\oplus$ are all equivalent. This yields the slightly simpler formulation of the $E$-theory groups
\[ E({\mathcal A},{\mathcal B}) =  \llbracket S \widehat{\otimes} \mathcal{A} \gtp \mathcal{K} , \mathcal{B} \gtp \mathcal{K} \rrbracket .\]

We shall use this formulation without further comment when convenient from now on.

\begin{theorem}[Homotopy invariance]
Suppose $\alpha, \beta \colon \mathcal{A} \rightarrow \mathcal{B}$ are homotopic graded $\ast$-functors and $\mathcal{C}$ is a $C^\ast$-category. Then the induced maps 
\begin{enumerate}
\item $\alpha_{\ast}, \beta_{\ast} \colon E(\mathcal{C},\mathcal{A}) \rightarrow E(\mathcal{C},\mathcal{B})$, and 
\item $\alpha^{\ast}, \beta^{\ast} \colon E(\mathcal{B},\mathcal{C}) \rightarrow E(\mathcal{A},\mathcal{C})$
\end{enumerate}
 are equal.
\end{theorem}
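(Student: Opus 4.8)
The plan is to reduce the statement to a homotopy invariance property that has essentially already been packaged in the machinery of asymptotic functors. A homotopy between the graded $\ast$-functors $\alpha$ and $\beta$ is, by definition, a graded $\ast$-functor $H\colon {\mathcal A}\rightarrow I{\mathcal B} = {\mathcal B}\gtp C[0,1]$ with $E_0\circ H = \alpha$ and $E_1\circ H = \beta$. The first observation is that $H$ is in particular a graded asymptotic functor, and so is each $E_s\circ H$ for $s\in[0,1]$, where $E_s = e_s\gtp\id$ evaluates at the point $s$. The key point will be that the family $\{E_s\circ H\}_{s\in[0,1]}$ interpolates continuously between $\alpha$ and $\beta$ in a way that is itself manufactured from a single asymptotic functor into $I{\mathcal B}$.

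For part (1), I would take a class $[\varphi]\in E({\mathcal C},{\mathcal A}) = \llbracket S\gtp{\mathcal C}\gtp{\mathcal K}, {\mathcal A}\gtp{\mathcal K}\rrbracket$ represented by a graded asymptotic functor $\varphi$. Then $\alpha_\ast[\varphi] = [(\alpha\gtp\id_{\mathcal K})\circ\varphi]$ and $\beta_\ast[\varphi] = [(\beta\gtp\id_{\mathcal K})\circ\varphi]$. I claim $(H\gtp\id_{\mathcal K})\circ\varphi\colon S\gtp{\mathcal C}\gtp{\mathcal K}\dashrightarrow I{\mathcal B}\gtp{\mathcal K} = I({\mathcal B}\gtp{\mathcal K})$ is a homotopy between these two: composing with $E_0$ and $E_1$ and using functoriality of tensoring with $\id$ (the Proposition on $\varphi\otimes\id$ and its $\id\otimes\varphi$ analogue, which commute suitably with the evaluation maps $E_0,E_1$) gives precisely $(\alpha\gtp\id)\circ\varphi$ and $(\beta\gtp\id)\circ\varphi$. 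Here I would note that since $H$ is a genuine $\ast$-functor, composition with the asymptotic functor $\varphi$ is the naive composition $((H\gtp\id)\circ\varphi)_t = (H\gtp\id)\circ\varphi_t$, with no reparametrisation needed, by the Proposition on composing an asymptotic functor with a $\ast$-functor. So $\alpha_\ast$ and $\beta_\ast$ agree on representatives, hence on homotopy classes.

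For part (2), I would argue dually. Given $[\psi]\in E({\mathcal B},{\mathcal C})$ represented by a graded asymptotic functor $\psi\colon S\gtp{\mathcal B}\gtp{\mathcal K}\dashrightarrow{\mathcal C}\gtp{\mathcal K}$, we have $\alpha^\ast[\psi] = [\psi\circ(\id_S\gtp\alpha\gtp\id_{\mathcal K})]$ and similarly for $\beta$. The homotopy $H$ induces a $\ast$-functor $\id_S\gtp H\gtp\id_{\mathcal K}\colon S\gtp{\mathcal A}\gtp{\mathcal K}\rightarrow S\gtp I{\mathcal B}\gtp{\mathcal K}$. The slight wrinkle is that we must produce a homotopy of asymptotic functors $S\gtp{\mathcal A}\gtp{\mathcal K}\dashrightarrow I({\mathcal C}\gtp{\mathcal K})$, i.e. into the mapping cylinder direction of the \emph{target}. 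This is done by forming, for each $t$, the map that sends a morphism $x$ to the path $s\mapsto \psi_t\big((\id_S\gtp E_s\gtp\id_{\mathcal K})(\id_S\gtp H\gtp\id_{\mathcal K})(x)\big)$; one checks this is continuous in $s$ and $t$ and satisfies the asymptotic axioms because $\psi$ does and the $E_s$ are uniformly bounded $\ast$-functors. Composing with $E_0$ and $E_1$ recovers $\psi\circ(\id_S\gtp\alpha\gtp\id_{\mathcal K})$ and $\psi\circ(\id_S\gtp\beta\gtp\id_{\mathcal K})$, giving the claim.

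The main obstacle I anticipate is purely bookkeeping rather than conceptual: checking that the various tensorings with identity and the evaluation maps $E_s$ commute on the nose (or up to the identifications $I({\mathcal B}\gtp{\mathcal K}) = I{\mathcal B}\gtp{\mathcal K}$ already used in the text), and in part (2) verifying carefully that the $s$-parametrised family assembled from $\psi_t$ genuinely lands in the mapping cylinder $I({\mathcal C}\gtp{\mathcal K})$ and is continuous in the cylinder coordinate. Once those identifications are made explicit, no reparametrisation of the asymptotic parameter is required anywhere, since $H$ is a strict $\ast$-functor, and both parts follow from the Propositions on composing asymptotic functors with $\ast$-functors together with the definition of homotopy.
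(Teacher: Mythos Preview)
Your proposal is correct and follows essentially the same approach as the paper: both write down the induced maps explicitly as $(\alpha\gtp\id_{\mathcal K})\circ\varphi$ and $\psi\circ(\id_S\gtp\alpha\gtp\id_{\mathcal K})$ and then use the given homotopy $H$ between $\alpha$ and $\beta$ to produce the required homotopies of asymptotic functors. In fact your argument is more detailed than the paper's, which simply records these formulae and asserts that they agree on homotopy classes ``since $\alpha$ and $\beta$ are homotopic''; your explicit constructions $(H\gtp\id_{\mathcal K})\circ\varphi$ for part~(1) and $s\mapsto\psi_t\circ(\id_S\gtp E_s\gtp\id_{\mathcal K})\circ(\id_S\gtp H\gtp\id_{\mathcal K})$ for part~(2) are precisely what is implicit in that assertion.
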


\begin{proof}
\begin{enumerate}
\item Let $f \colon S \widehat{\otimes} \mathcal{C} \widehat{\otimes}\mathcal{K} \dashrightarrow \mathcal{A} \widehat{\otimes} \mathcal{K}$ be a graded asymptotic morphism representing $\llbracket f \rrbracket \in E(\mathcal{C},\mathcal{A})$, then we can define $\alpha_\ast$ and $\beta_\ast$ by 
\[\alpha_\ast (f_t) = (\alpha \widehat{\otimes} \text{id}_{\mathcal{K}} ) \circ f_t ,\]
\[\beta_\ast (f_t) =  (\beta \widehat{\otimes} \text{id}_{\mathcal{K}} ) \circ f_t .\]
Since $\alpha$ and $\beta$ are homotopic, and hence $\alpha_\ast$ and $\beta_\ast$ are equal since we are homotopic on representatives of our class. 
\item Similarly to (1) we can define $\alpha^\ast$ and $\beta^\ast$, for an representative $g$ of $\llbracket g \rrbracket \in E(\mathcal{B},\mathcal{C})$ by
\[\alpha^\ast (g_t) = g_t \circ (\text{id}_{\mathcal{S}} \widehat{\otimes}\alpha  \widehat{\otimes} \text{id}_{\mathcal{K}}) ,\]
\[\beta^\ast(g_t) = g_t \circ (\text{id}_{\mathcal{S}} \widehat{\otimes}\beta \widehat{\otimes} \text{id}_{\mathcal{K}}).\]
Then again since $\alpha$ and $\beta$ are homotopic, we can clearly see that $\alpha^\ast$ and $\beta^\ast$ are equal at the level of homotopy classes. 
\end{enumerate}
\end{proof}

The following immediately follow from the fact that we have an isomorphism $\mathcal{K} \gtp \mathcal{K} \cong \mathcal{K}$. 
\begin{proposition}
Let $\mathcal{A}$ and $\mathcal{B}$ be graded $C^\ast$-categories. Then we have natural isomorphisms 
\[E(\mathcal{A} \otimes \mathcal{K}, \mathcal{B}) \cong E(\mathcal{A}, \mathcal{B}) ~ \text{and} ~ E(\mathcal{A}, \mathcal{B} \otimes \mathcal{K}) \cong E(\mathcal{A}, \mathcal{B})\]
\end{proposition}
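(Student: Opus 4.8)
The plan is to reduce the statement to formal manipulation of the bracket sets, using the simplified description
\[ E(\mathcal A,\mathcal B) = \llbracket S \gtp \mathcal A \gtp \mathcal K , \mathcal B \gtp \mathcal K \rrbracket \]
recorded just above, together with associativity of the graded tensor product (up to canonical graded $\ast$-isomorphism) and the fixed graded $\ast$-isomorphism $\mu \colon \mathcal K \gtp \mathcal K \to \mathcal K$ noted earlier in Section 4.

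For the second variable, I would unravel the definition to get
\[ E(\mathcal A , \mathcal B \gtp \mathcal K) = \llbracket S \gtp \mathcal A \gtp \mathcal K , (\mathcal B \gtp \mathcal K) \gtp \mathcal K \rrbracket , \]
reassociate $(\mathcal B \gtp \mathcal K) \gtp \mathcal K \cong \mathcal B \gtp (\mathcal K \gtp \mathcal K)$, and apply $\id_{\mathcal B} \gtp \mu$ to obtain a graded $\ast$-isomorphism onto $\mathcal B \gtp \mathcal K$. Post-composition with this $\ast$-isomorphism sends a graded asymptotic functor with target $(\mathcal B \gtp \mathcal K)\gtp \mathcal K$ to one with target $\mathcal B \gtp \mathcal K$, and by the results of Section 3 on composing $\ast$-functors with asymptotic functors this respects homotopy, hence descends to a bijection $E(\mathcal A, \mathcal B \gtp \mathcal K) \to E(\mathcal A, \mathcal B)$, with inverse induced by the inverse $\ast$-isomorphism. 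For the first variable, the same reasoning gives a graded $\ast$-isomorphism $\Theta \colon S \gtp (\mathcal A \gtp \mathcal K) \gtp \mathcal K \to S \gtp \mathcal A \gtp \mathcal K$ (built from $\id_S \gtp \id_{\mathcal A} \gtp \mu$ after reassociating), and pre-composition with $\Theta$ gives a bijection $E(\mathcal A, \mathcal B) \to E(\mathcal A \gtp \mathcal K, \mathcal B)$, with inverse pre-composition with $\Theta^{-1}$. Naturality of both isomorphisms in $\mathcal A$ and $\mathcal B$ then follows because the identifications are implemented by composing with fixed $\ast$-isomorphisms supported on the $\mathcal K$-factors, and composition of $\ast$-functors and asymptotic functors is strictly associative, exactly as in the proof of the functoriality lemma; one simply checks the relevant squares with $\alpha^\ast$ and $\alpha_\ast$ commute on representatives.

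The one genuine point of care — and the step I would be most cautious about — is that the isomorphism $\mathcal K \gtp \mathcal K \cong \mathcal K$ is not canonical: it depends on a choice of graded unitary $\mathcal H \otimes \mathcal H \cong \mathcal H$, just as $\mathcal K \oplus \mathcal K \cong \mathcal K$ depends on a choice of $\mathcal H \oplus \mathcal H \cong \mathcal H$. To see that the resulting isomorphism of $E$-theory groups is independent of this choice, I would invoke the earlier results that unitarily equivalent graded $\ast$-functors induce the same maps $\alpha_\ast$ and $\alpha^\ast$ on $E$-theory, together with the observation that any two such choices of $\mu$ differ by composition with a (unitary) equivalence; this also shows the isomorphisms are canonical in the sense needed for naturality. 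I would regard verifying that the reassociation maps for $\gtp$ are honest graded $\ast$-isomorphisms (Koszul signs included) as already part of the construction of $\gtp$ given in the paper, and would simply cite it rather than redo it.
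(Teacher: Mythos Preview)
Your proposal is correct and follows exactly the approach the paper intends: the paper simply records, just before the proposition, that ``the following immediately follow from the fact that we have an isomorphism $\mathcal{K} \gtp \mathcal{K} \cong \mathcal{K}$'' and gives no further argument. You have unpacked this one-line justification in detail, including the care about non-canonicity via the unitary-equivalence results of Section~6, which the paper leaves implicit.
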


Finally, just as in the case of ordinary $E$-theory and $KK$-theory, we have the following abstract form of duality.

\begin{theorem} \label{duality}
Let $\mathcal D$ and $\mathcal E$ be graded $C^\ast$-categories with $E$-theory elements $\alpha \in E({\mathcal D},{\mathcal E})$ and $\beta \in E({\mathcal E},{\mathcal D})$ such that the product of $\alpha$ and $\beta$ is the identity on $E({\mathcal D},{\mathcal D})$ and the product of $\beta$ and $\alpha$ is the identity on $E({\mathcal E},{\mathcal E})$.

Then for graded $C^\ast$-categories $\mathcal A$ and $\mathcal B$ we have natural isomorphisms
\[ E({\mathcal A} \gtp {\mathcal D} , {\mathcal B})\cong E({\mathcal A}, {\mathcal B}\gtp {\mathcal E}) \]
and
\[ E({\mathcal A} \gtp {\mathcal E} , {\mathcal B})\cong E({\mathcal A}, {\mathcal B}\gtp {\mathcal D}) \]
\end{theorem}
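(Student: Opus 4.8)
The plan is to deduce the duality isomorphism from the associative product on $E$-theory together with the functoriality established above, exactly as in the classical case (see \cite{Black, GHT}). First I would recall that the product gives, for any graded $C^\ast$-categories, composition maps $E(\mathcal X, \mathcal Y)\times E(\mathcal Y,\mathcal Z)\to E(\mathcal X,\mathcal Z)$, and that by the proposition on tensoring asymptotic functors with the identity, tensoring with a fixed $C^\ast$-category is functorial; hence any class $\gamma \in E(\mathcal Y,\mathcal Z)$ induces a class $\gamma \gtp \id_{\mathcal W}\in E(\mathcal Y\gtp \mathcal W, \mathcal Z\gtp \mathcal W)$ (and similarly on the left), and these exterior products are themselves associative and compatible with composition. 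The key input is that the graded tensor product $\gtp$ is associative and symmetric up to natural graded $\ast$-isomorphism, and that such isomorphisms induce the identity-type isomorphisms on $E$-theory.

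The main construction: given $\alpha\in E(\mathcal D,\mathcal E)$ and $\beta\in E(\mathcal E,\mathcal D)$ as in the hypothesis, define a map
\[ \Phi\colon E(\mathcal A\gtp \mathcal D,\mathcal B)\rightarrow E(\mathcal A,\mathcal B\gtp \mathcal E) \]
by sending a class $\xi$ to the composite product
\[ \mathcal A \;\xrightarrow{\;\id_{\mathcal A}\gtp \alpha\;}\; \mathcal A\gtp \mathcal D\gtp \mathcal E \;\xrightarrow{\;\xi \gtp \id_{\mathcal E}\;}\; \mathcal B\gtp \mathcal E, \]
where I read $\id_{\mathcal A}\gtp\alpha$ as the exterior product of the identity class $1_{\mathcal A}\in E(\mathcal A,\mathcal A)$ with $\alpha$, and $\xi\gtp\id_{\mathcal E}$ as the exterior product of $\xi$ with $1_{\mathcal E}$. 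In the opposite direction define
\[ \Psi\colon E(\mathcal A,\mathcal B\gtp \mathcal E)\rightarrow E(\mathcal A\gtp \mathcal D,\mathcal B) \]
by sending $\eta$ to the composite
\[ \mathcal A\gtp \mathcal D \;\xrightarrow{\;\eta \gtp \id_{\mathcal D}\;}\; \mathcal B\gtp \mathcal E\gtp \mathcal D \;\xrightarrow{\;\id_{\mathcal B}\gtp \beta\;}\; \mathcal B\gtp \mathcal D\gtp \mathcal D \;\xrightarrow{\;\id_{\mathcal B}\gtp \mu\;}\; \mathcal B, \]
using implicitly the symmetry isomorphism $\mathcal E\gtp\mathcal D\cong\mathcal D\gtp\mathcal E$ and a suitable folding; more cleanly, I would phrase $\Psi(\eta)$ as the product of $\eta$ with $\beta$ (suitably tensored). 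Then I would compute $\Psi\circ\Phi$ and $\Phi\circ\Psi$ using associativity of the product, naturality of the product with respect to $\ast$-functors (established in the functoriality section), and the interchange law for exterior products; everything collapses, and the hypotheses $\beta\circ\alpha = 1_{\mathcal D}$ in $E(\mathcal D,\mathcal D)$ and $\alpha\circ\beta = 1_{\mathcal E}$ in $E(\mathcal E,\mathcal E)$ force both composites to be the identity. Naturality in $\mathcal A$ and $\mathcal B$ follows because $\Phi$ and $\Psi$ are built entirely from products and functorially induced maps, which commute by the remark following the functoriality lemma. The second displayed isomorphism is obtained by swapping the roles of $\mathcal D$ and $\mathcal E$ (and of $\alpha$ and $\beta$), which is legitimate since the hypothesis is symmetric in the two categories.

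The main obstacle I anticipate is purely bookkeeping: making precise and consistent the exterior (tensor) product on $E$-theory and verifying the interchange law $(\xi\gtp\id)\circ(\id\gtp\alpha) = (\id\gtp\alpha)\circ(\xi\gtp\id)$ up to the natural reordering isomorphisms of $\gtp$, including checking that the copies of $\mathcal K$ hidden in the definition of the $E$-theory groups behave well under $\mathcal K\gtp\mathcal K\cong\mathcal K$ (as recorded earlier). Since the product was only defined directly for the groups $E(\mathcal A,\mathcal B)=\llbracket S\gtp\mathcal A_\oplus\gtp\mathcal K,\mathcal B_\oplus\gtp\mathcal K\rrbracket_\oplus$, I would either first establish the exterior product $E(\mathcal A,\mathcal B)\times E(\mathcal A',\mathcal B')\to E(\mathcal A\gtp\mathcal A',\mathcal B\gtp\mathcal B')$ as a small lemma (deriving it from $\Delta\colon S\to S\gtp S$, the functor $\varphi\mapsto\varphi\gtp\id$, and $\mathcal K\gtp\mathcal K\cong\mathcal K$, exactly as the internal product was built), or simply note that it is formally identical to the construction in \cite{GHT} and cite it. Once the exterior product and its compatibility with composition are in hand, the rest of the argument is a formal diagram chase and should be written out in a few lines.
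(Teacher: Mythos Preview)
The paper states this theorem without proof, remarking only that it holds ``just as in the case of ordinary $E$-theory and $KK$-theory,'' so there is no argument in the paper to compare your proposal against.

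Your construction contains a type error that points to a genuine obstruction rather than mere bookkeeping. You want $\Phi(\xi)$ to begin with an arrow $\mathcal A \to \mathcal A\gtp\mathcal D\gtp\mathcal E$ built from $\id_{\mathcal A}\gtp\alpha$; but since $\alpha\in E(\mathcal D,\mathcal E)$, the exterior product $\id_{\mathcal A}\gtp\alpha$ lies in $E(\mathcal A\gtp\mathcal D,\mathcal A\gtp\mathcal E)$, not in $E(\mathcal A,\mathcal A\gtp\mathcal D\gtp\mathcal E)$. To obtain the latter you would need a \emph{unit} class in $E(\F,\mathcal D\gtp\mathcal E)$, which the hypothesis does not provide. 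The unexplained ``folding'' $\mu$ in your definition of $\Psi$ is precisely the missing \emph{counit} in $E(\mathcal D\gtp\mathcal D,\F)$. In fact the statement as written appears to be false: taking $\mathcal D=\mathcal E$ and $\alpha=\beta=1_{\mathcal D}$, it would assert $E(\mathcal A\gtp\mathcal D,\mathcal B)\cong E(\mathcal A,\mathcal B\gtp\mathcal D)$ for every $\mathcal D$, which already fails for $\mathcal A=\mathcal B=\F$ and a trivially graded $\mathcal D$ whose $K$-theory and $K$-homology differ. The standard abstract duality theorem in $KK$/$E$-theory to which the paper alludes instead takes as hypothesis a unit $\alpha\in E(\F,\mathcal D\gtp\mathcal E)$ and a counit $\beta\in E(\mathcal E\gtp\mathcal D,\F)$ satisfying the zig-zag identities; under \emph{that} hypothesis your outline is exactly the right one and the interchange-law bookkeeping you anticipate is the only remaining work. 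The specific applications the paper makes afterwards (moving $\F_{p,q}$ across the comma) are true, but they rely on the concrete self-duality of Clifford algebras via $\F_{p,q}\gtp\F_{q,p}\cong\F_{p+q,p+q}$ together with $\mathcal K\gtp\F_{1,1}\cong\mathcal K$, not on mere $E$-equivalence.
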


\section{Bott Periodicity}


Now in $E$-theory we have a Bott periodicity result that can be found in \cite{GH04} for the complex graded case and in \cite{Bro16} for the real case. Before we state the result we note that $\mathbb{F}_{p,q}$ is the \emph{Clifford algebra} on generators $e_1,e_2, \ldots e_n$ such that $e_i^2=1$, $f_j^2=-1$,  $e_i e_j= -e_j e_i$, $f_if_j =-f_jf_i$, and $e_if_j =-f_je_i$ for all $i,j$. We define a grading by deeming the generators $e_i$ and $f_j$ to be odd. Then we have the following:

\begin{theorem}\label{graded Bott map}
There is a $\ast$-homomorphism $b\colon S \rightarrow \Sigma \widehat{\otimes} \mathbb{F}_{1,0}$ inducing an isomorphism 
which induces an invertible $E$-theory element
\[ [b]\in E(\F, \Sigma \widehat{\otimes} \mathbb{F}_{1,0}) . \]
\end{theorem}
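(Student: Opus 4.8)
The plan is to reduce the statement to a known fact about $C^\ast$-algebras, namely the Bott periodicity theorem of Higson-Kasparov-Trout (and its real-graded analogue), and then observe that the categorical version is immediate because the objects involved, $S$, $\Sigma = C_0(\mathbb{R})$, and $\mathbb{F}_{1,0}$, are all ordinary (one-object) graded $C^\ast$-algebras. Concretely, $S = C_0(\mathbb{R})$ with its standard even/odd grading, $\Sigma \widehat{\otimes} \mathbb{F}_{1,0} \cong C_0(\mathbb{R}) \widehat{\otimes} \mathbb{C}\ell_1$, and the Bott homomorphism $b\colon S \to C_0(\mathbb{R})\widehat{\otimes}\mathbb{C}\ell_1$ is the one that sends the generators $u(x)=e^{-x^2}$ and $v(x)=xe^{-x^2}$ of $S$ to the corresponding functions built out of the degree-one generator $e_1$ of $\mathbb{F}_{1,0}$; this is exactly the map considered in section 1 of \cite{GH04} (and in \cite{Bro16} for $\F = \R$). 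So the first step is to write down $b$ explicitly and check it is a graded $\ast$-homomorphism.

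Next I would recall that $\F$, viewed as a one-object graded $C^\ast$-category with trivial grading, has $\F_\oplus \widehat{\otimes}\mathcal{K}$ unitarily equivalent to $\mathcal{K}$, so by the simplified formulation of the $E$-theory groups established just before the Homotopy Invariance theorem, $E(\F, \Sigma\widehat{\otimes}\mathbb{F}_{1,0}) = \llbracket S\widehat{\otimes}\mathcal{K}, \Sigma\widehat{\otimes}\mathbb{F}_{1,0}\widehat{\otimes}\mathcal{K}\rrbracket$. Since all four $C^\ast$-algebras in play are one-object $C^\ast$-categories, this group is literally the $C^\ast$-algebra $E$-theory group $E(\F, \Sigma\widehat{\otimes}\mathbb{F}_{1,0})$ in the sense of Connes-Higson/Guentner-Higson; here one uses that stabilising a one-object $C^\ast$-category $A$ by $\mathcal{K}$ gives back the ordinary stabilisation $A\otimes\mathcal{K}$, and that an asymptotic functor between one-object $C^\ast$-categories is precisely an asymptotic morphism of $C^\ast$-algebras. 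Thus the class $[b]$ lands in the ordinary $E$-theory group, and the theorem of Higson-Kasparov-Trout (respectively Browne in the real case) says that $[b]$ is invertible there, with inverse the Dirac element.

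The one point that needs a sentence of care is that the product on categorical $E$-theory, as defined in section 5 via the comultiplication $\Delta\colon S\to S\widehat{\otimes} S$ and reparametrised composition, restricts on one-object categories to the usual $E$-theory product; granting this (which is immediate from the construction, since $\Delta$ is the same $\Delta$ used classically and the composition theorem \ref{composition} was explicitly set up to mirror \cite{Black}), invertibility of $[b]$ in the classical sense gives invertibility of $[b]$ in $E(\F, \Sigma\widehat{\otimes}\mathbb{F}_{1,0})$. I expect the only real obstacle is purely expository: one must be careful that the conventions for the grading on $S$, for $\mathbb{F}_{1,0}$, and for the graded tensor product here agree with those of \cite{GH04,Bro16} so that the cited Bott periodicity statement applies verbatim; once the dictionary is fixed there is no new analysis to do. So the proof is essentially: \emph{restate} the classical Bott map, \emph{observe} that categorical $E$-theory on one-object categories is classical $E$-theory with classical product, and \emph{cite} \cite{GH04} (complex case) and \cite{Bro16} (real case) for invertibility of $[b]$.
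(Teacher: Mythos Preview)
Your proposal is correct and matches the paper's approach: the paper gives no proof at all for this theorem, merely prefacing it with the remark that the result ``can be found in \cite{GH04} for the complex graded case and in \cite{Bro16} for the real case,'' i.e.\ it treats the statement as a direct citation of the classical $C^\ast$-algebra Bott periodicity. Your write-up is in fact more careful than the paper's, since you spell out why the citation transfers to the categorical setting (all the objects involved are one-object $C^\ast$-categories, and the categorical $E$-theory product restricts to the classical one on such).
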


In particular, the $\ast$-homomorphism $b\colon S \rightarrow \Sigma \widehat{\otimes} \mathbb{F}_{1,0}$ induces an invertible $E$-theory element
\[ [b]\in  E(\F , \Sigma \widehat{\otimes} \mathbb{F}_{1,0}) .\]
Let $\mathcal A$ and $\mathcal B$ be $C^\ast$-categories. Then we have a $\ast$-functor
\[ b_{\mathcal B} \colon S{\mathcal B}\rightarrow \Sigma \widehat{\otimes} \mathcal B \widehat{\otimes} \mathbb{F}_{1,0} = \Sigma {\mathcal B} \widehat{\otimes} \mathbb{F}_{1,0} \]
defined by taking the tensor product with the identiy $\ast$-functor on $\mathcal B$. This $\ast$-functor also, by construction of the $E$-theory product, yields an isomorphism
\[ [b_{\mathcal B}] \in E( {\mathcal B}, \Sigma {\mathcal B}\widehat{\otimes} \mathbb{F}_{1,0}) .\]

Taking the $E$-theory product of an element of $E({\mathcal A},{\mathcal B})$ with  $[b_{\mathcal B}]$ gives us a natural isomorphism of $E$-theory groups
\[ E({\mathcal A},{\mathcal B}) \rightarrow E({\mathcal A}, \Sigma {\mathcal B}\widehat{\otimes} \mathbb{F}_{1,0}) . \]

There is similarly an isomorphism
\[ E({\mathcal A},{\mathcal B}) \rightarrow E(\Sigma {\mathcal A}\gtp \F_{1,0} , {\mathcal B}) \]

By theorem \ref{duality} we also have natural isomorphisms
\[ E({\mathcal A}\gtp \F_{1,0} ,{\mathcal B}) \cong E({\mathcal A}, \Sigma {\mathcal B}) \qquad E( \Sigma {\mathcal A}, {\mathcal B}\gtp \F_{0,1}) \]

To use this result we need a few facts on Clifford algebras. Observe
\[ \F_{p,q}\gtp \F_{r,s} \cong \F_{p+r,q+s} \]

Clifford algebras have the following form of periodicity \cite{LM} in the complex and real cases respectively
\[ \C_{2,0}\cong \C_{0,2} \cong \C_{1,1} \qquad \R_{8,0}\cong \R_{0,8}\cong \R_{4,4} \]

To apply these results to $E$-theory, note the following:

\begin{proposition}
We have an isomorphism $\K \gtp \F_{1,1} \equiv \K$.
\end{proposition}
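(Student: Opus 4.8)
The plan is to identify $\F_{1,1}$ with a matrix algebra (up to graded Morita-type equivalence) and then absorb it into the compact operators. First I would recall the standard fact from the theory of Clifford algebras that the graded tensor product $\F_{1,1}$ is graded $\ast$-isomorphic to $M_2(\F)$ equipped with its standard even/odd grading, i.e. the grading for which the diagonal entries are even and the off-diagonal entries are odd. Concretely, sending the odd generator $e$ with $e^2 = 1$ to the matrix $\left(\begin{smallmatrix} 0 & 1 \\ 1 & 0 \end{smallmatrix}\right)$ and the odd generator $f$ with $f^2 = -1$ to $\left(\begin{smallmatrix} 0 & -1 \\ 1 & 0 \end{smallmatrix}\right)$ (or $\left(\begin{smallmatrix} 0 & -i \\ i & 0 \end{smallmatrix}\right)$ in the complex case) extends to the desired isomorphism, and one checks it is graded and respects the $C^\ast$-norm because $M_2(\F)$ with this grading carries a unique $C^\ast$-norm.

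Next I would observe that, as a graded $C^\ast$-category, $\K \gtp M_2(\F)$ is naturally graded $\ast$-isomorphic to $\K$ itself. This is because a graded separable Hilbert space tensored with $\F^2$ (graded in the standard way) is again a graded separable Hilbert space, and the compact operators on the result are exactly the compact operators of $\K$ after identifying the underlying graded Hilbert space. This uses only that $\K$ is built from \emph{all} graded separable Hilbert spaces, so it is stable under tensoring with a fixed finite-dimensional graded space — exactly the kind of observation already used in the excerpt when noting $\K \oplus \K \cong \K$ and $\K \gtp \K \cong \K$. Combining the two steps gives the chain of graded $\ast$-isomorphisms
\[
\K \gtp \F_{1,1} \cong \K \gtp M_2(\F) \cong \K .
\]

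The main subtlety, and the step I would be most careful about, is getting the \emph{gradings} to match in the first isomorphism: $\F_{1,1}$ receives its grading from declaring all Clifford generators odd, whereas $M_2(\F)$ is being given the "inner" grading by matrix position, and one must verify that the isomorphism intertwines the two rather than merely being an ungraded isomorphism. This is where the sign conventions in the graded tensor product (the $(-1)^{\deg(x)\deg(y)}$ factors in the composition and involution laws defining $\gtp$) have to be checked explicitly on the generators $e \otimes 1$, $1 \otimes f$, and $e \otimes f$; once these three elements are shown to map to the correct degree-homogeneous matrices with the correct products, the rest extends by linearity and multiplicativity. The second isomorphism is then essentially formal. (Note also the statement is written with "$\equiv$"; I read this as graded $\ast$-isomorphism, which is what the above produces.)
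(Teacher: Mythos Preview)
The paper's own ``proof'' consists of the single placeholder line ``Need to say something here,'' so there is nothing to compare your argument against; you are in fact supplying the missing content. Your two-step strategy---identify $\F_{1,1}$ with $M_2(\F)$ carrying the off-diagonal grading, then absorb $M_2(\F)$ into $\K$---is correct and is the standard way to prove this. The explicit generator assignments you give do define a graded $\ast$-isomorphism in both the real and complex cases (one checks $e^\ast=e$, $f^\ast=-f$, and that the images have the right squares, anticommutation, and parity), and the absorption step is just the observation that $M_2(\F)$ with this grading is $\K(\F^{1|1})$, so it is swallowed by the same mechanism the paper already invokes when asserting $\K\gtp\K\cong\K$.

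One point worth tightening: the word ``isomorphism'' is doing some work here, since $\K\gtp\F_{1,1}$ and $\K$ do not literally have the same object set---the functor $H\mapsto H\gtp\F^{1|1}$ is fully faithful but not bijective on objects. What you actually obtain is a unitary equivalence of graded $C^\ast$-categories, which is exactly what the paper needs (and is the same sense in which the paper's earlier claim $\K\gtp\K\cong\K$ must be read). Since the paper has already established that unitary equivalences induce isomorphisms on $E$-theory, this is harmless, but it would be cleaner to say so explicitly rather than leave ``$\equiv$'' ambiguous.
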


\begin{proof}
Need to say something here.
\end{proof}

It follows that given $C^\ast$-categories $\mathcal A$ and $\mathcal B$ we have natural isomorphisms
\[ E({\mathcal A},{\mathcal B}) \cong E({\mathcal A}\gtp \F_{1,1},{\mathcal B}) \cong E({\mathcal A}, {\mathcal B}\gtp F_{1,1} )\cong E({\mathcal A}\gtp \F_{1,1} , {\mathcal B}\gtp \F_{1,1}) \]

Hence we have natural isomorphisms
\[ E({\mathcal A}\gtp \F_{0,1},{\mathcal B})\cong E(\Sigma {\mathcal A} \gtp \F_{1,1} ,{\mathcal B}) \cong E(\Sigma {\mathcal A},{\mathcal B}) \]
and
\[ E({\mathcal A},{\mathcal B}\gtp \F_{0,1} ) \cong E({\mathcal A},\Sigma {\mathcal B}) \]

By Bott periodicity and periodicity of Clifford algebras, in the complex case it follows that we have a natural isomorphism
\[ E({\mathcal A},{\mathcal B})\cong E({\mathcal A},{\mathcal B}\gtp \C_{2,0} )\cong E({\mathcal A},\Sigma^2 {\mathcal B} ) \]

Similarly, in the real case we have a natural isomorphism
\[ E({\mathcal A},{\mathcal B})\cong E({\mathcal A},{\mathcal B}\gtp \R_{8,0} )\cong E({\mathcal A},\Sigma^8 {\mathcal B} ) \]

\begin{proposition}
Let $\mathcal A$ and $\mathcal B$ be $C^\ast$-categories. We a natural isomorphism
\[ E({\mathcal A}\gtp \F_{p,q},{\mathcal B}) \cong E({\mathcal  A}, {\mathcal B}\gtp \F_{p,q}) \]
\end{proposition}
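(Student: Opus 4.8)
The plan is to deduce this directly from the abstract duality theorem (Theorem~\ref{duality}), applied in the degenerate case $\mathcal{D} = \mathcal{E} = \F_{p,q}$. Observe first that $\F_{p,q}$ is finite-dimensional, hence separable, and that $\mathcal{A}\gtp\F_{p,q}$ and $\mathcal{B}\gtp\F_{p,q}$ are separable whenever $\mathcal{A}$ and $\mathcal{B}$ are, so that the constructions of the preceding sections --- in particular the $E$-theory product of Theorem~\ref{composition}, which underlies Theorem~\ref{duality} --- are all available here.

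The one ingredient beyond Theorem~\ref{duality} itself is that the $E$-theory product carries two-sided units. For any graded $C^\ast$-category $\mathcal{C}$, consider the class $[\mathrm{id}_{\mathcal{C}}] = [c\otimes(\mathrm{id}_{\mathcal{C}})_{\oplus}\otimes\id]\in E(\mathcal{C},\mathcal{C})$ attached to the identity $\ast$-functor. By the functoriality lemma, together with the remark that the functorially induced maps $\alpha_\ast$ and $\alpha^\ast$ are special cases of the product, multiplication by $[\mathrm{id}_{\mathcal{C}}]$ is the identity homomorphism on $E(\mathcal{D},\mathcal{C})$ and on $E(\mathcal{C},\mathcal{D})$ for every $\mathcal{D}$. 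In particular $[\mathrm{id}_{\F_{p,q}}]$ is a two-sided identity for the ring $E(\F_{p,q},\F_{p,q})$, so the product of $[\mathrm{id}_{\F_{p,q}}]$ with itself, in either order, is that identity element.

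Now I would apply Theorem~\ref{duality} with $\mathcal{D}=\mathcal{E}=\F_{p,q}$ and $\alpha=\beta=[\mathrm{id}_{\F_{p,q}}]$. The hypotheses are satisfied by the previous paragraph, and the theorem delivers the natural isomorphism
\[ E(\mathcal{A}\gtp\F_{p,q},\mathcal{B})\cong E(\mathcal{A},\mathcal{B}\gtp\F_{p,q}), \]
natural in $\mathcal{A}$ and $\mathcal{B}$, which is the assertion.

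There is no genuine obstacle, since the content is packaged inside Theorem~\ref{duality}; the only point that must not be glossed over is the verification that $[\mathrm{id}_{\F_{p,q}}]$ is an honest unit for the product, rather than merely an identity up to homotopy of representatives, and this is exactly what the functoriality lemma and its consequences provide. I would also note why the more obvious dual pair is the wrong one: the isomorphisms $\F_{p,q}\gtp\F_{q,p}\cong\F_{p+q,p+q}$ and $\mathcal{K}\gtp\F_{1,1}\cong\mathcal{K}$ exhibit $\F_{q,p}$ as an $E$-theoretic dual of $\F_{p,q}$, but feeding that into Theorem~\ref{duality} only yields $E(\mathcal{A}\gtp\F_{p,q},\mathcal{B})\cong E(\mathcal{A},\mathcal{B}\gtp\F_{q,p})$, which in the real case (where $\F_{q,p}\not\cong\F_{p,q}$ in general) is not the stated proposition. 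It is precisely the self-pairing $\mathcal{D}=\mathcal{E}=\F_{p,q}$ that places the correct Clifford algebra on the right-hand side.
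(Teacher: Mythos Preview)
Your argument, taken literally, uses nothing about $\F_{p,q}$ beyond the existence of a unit in $E(\F_{p,q},\F_{p,q})$. The same reasoning with $\mathcal{D}=\mathcal{E}=\mathcal{C}$ and $\alpha=\beta=[\mathrm{id}_{\mathcal{C}}]$ would give $E(\mathcal{A}\gtp\mathcal{C},\mathcal{B})\cong E(\mathcal{A},\mathcal{B}\gtp\mathcal{C})$ for \emph{every} separable graded $C^\ast$-category $\mathcal{C}$. That is false already for $C^\ast$-algebras: the functor $-\gtp\mathcal{C}$ on the source is adjoint, in the $E$-theoretic sense, to $-\gtp\mathcal{C}'$ on the target only when $\mathcal{C}'$ is a Spanier--Whitehead dual of $\mathcal{C}$, not to $-\gtp\mathcal{C}$ itself. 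So either Theorem~\ref{duality} is imprecisely stated in the paper --- the genuine hypothesis of abstract duality is a pairing, say classes in $E(\mathcal{D}\gtp\mathcal{E},\F)$ and $E(\F,\mathcal{E}\gtp\mathcal{D})$ satisfying zig-zag identities, not a bare $E$-equivalence --- or you are misreading it. In either case the degenerate self-pairing $\mathcal{D}=\mathcal{E}=\F_{p,q}$ with $\alpha=\beta=[\mathrm{id}]$ carries no content and cannot establish the proposition.

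The paper's proof supplies the missing input. It first handles $\F_{1,0}$ via Bott periodicity: the Bott class yields $E(\mathcal{A}\gtp\F_{1,0},\mathcal{B})\cong E(\mathcal{A},\Sigma\mathcal{B})\cong E(\mathcal{A},\mathcal{B}\gtp\F_{1,0})$. For $\F_{0,1}$ it invokes Clifford periodicity ($\F_{0,8}\cong\F_{4,4}$ together with $\mathcal{K}\gtp\F_{1,1}\cong\mathcal{K}$) to rewrite $\F_{0,1}$, at the level of $E$-theory, in terms of positive-signature Clifford factors, reducing to the case already done; the general $\F_{p,q}$ then follows by iterated tensor products. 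Your final paragraph actually locates the correct dual $\F_{q,p}$ and then discards it because it lands on the ``wrong'' side; the real point is precisely to combine that genuine duality with Bott and Clifford periodicity to show that $\F_{q,p}$ and $\F_{p,q}$ become interchangeable in $E$-theory, which is exactly what the paper's computation does.
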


\begin{proof}
By Bott periodicity we have
\[ E({\mathcal A}\gtp \F_{1,0},{\mathcal B}) \cong E({\mathcal  A}, \Sigma {\mathcal B}) \cong E({\mathcal A},{\mathcal B}\gtp \F_{1,0}) \]

By periodicity of Clifford algebras, whether real or complex, we have
\[ E({\mathcal A}\gtp \F_{0,8} ,{\mathcal B}) \cong E({\mathcal A}\gtp \F_{4,4} ,{\mathcal B}) \cong E({\mathcal A},{\mathcal B}) \]
and
\[ E({\mathcal A},{\mathcal B}\gtp \F_{0,8} ) \cong E({\mathcal A},{\mathcal B}) \]

Hence, by properties of tensor products of Clifford algebras
\[ E({\mathcal A}\gtp \F_{0,1},{\mathcal B})\cong E({\mathcal A} \gtp F_{7,8} ,{\mathcal B}) \cong E({\mathcal A}\gtp \F_{15,0}, {\mathcal B}) .\]

Similarly $E({\mathcal A},{\mathcal B}\gtp \F_{0,1}) \cong E({\mathcal A},{\mathcal B}\gtp \F_{15,0})$. So by the above
\[ E({\mathcal A}\gtp \F_{0,1},{\mathcal B}) \cong E({\mathcal  A}, {\mathcal B}\gtp \F_{0,1}) \]

The result now follows.
\end{proof}

Combining the above proposition with Bott periodicty, we immediately have the following

\begin{corollary} \label{BP2}
We have a natural isomorphism
\[E(\mathcal{A}, \mathcal{B}) \rightarrow E(\Sigma \mathcal{A}, \Sigma \mathcal{B}) \]
\end{corollary}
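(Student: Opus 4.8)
The plan is to deduce the isomorphism by composing isomorphisms already established: Bott periodicity in each variable, the duality of Theorem~\ref{duality} applied to the $E$-dual pair $(\F_{1,0},\F_{0,1})$, and the identification $\mathcal{K}\gtp\F_{1,1}\cong\mathcal{K}$. The organising point is that $\F_{1,0}$ and $\F_{0,1}$ are mutually $E$-inverse, because $\F_{1,0}\gtp\F_{0,1}\cong\F_{1,1}$ and $\mathcal{K}\gtp\F_{1,1}\cong\mathcal{K}$ make $\F_{1,1}$ $E$-trivial; hence a factor of $\F_{1,0}$ produced in the first variable and one produced in the second can be brought together and cancelled, so that a suspension on each side of $E$ washes out.

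Concretely, starting from $E(\mathcal{A},\mathcal{B})$, I would apply the first-variable Bott isomorphism $E(\mathcal{A},\mathcal{B})\cong E(\Sigma\mathcal{A}\gtp\F_{1,0},\mathcal{B})$, then the second-variable Bott isomorphism $E(\mathcal{X},\mathcal{B})\cong E(\mathcal{X},\Sigma\mathcal{B}\gtp\F_{1,0})$ with $\mathcal{X}=\Sigma\mathcal{A}\gtp\F_{1,0}$, arriving at $E(\Sigma\mathcal{A}\gtp\F_{1,0},\ \Sigma\mathcal{B}\gtp\F_{1,0})$. Next, Theorem~\ref{duality} for the pair $(\F_{1,0},\F_{0,1})$ transports the factor $\F_{1,0}$ out of the first variable and into the second as its dual $\F_{0,1}$, yielding $E(\Sigma\mathcal{A},\ \Sigma\mathcal{B}\gtp\F_{1,0}\gtp\F_{0,1})=E(\Sigma\mathcal{A},\ \Sigma\mathcal{B}\gtp\F_{1,1})$. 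Finally the $\ast$-isomorphism $\mathcal{K}\gtp\F_{1,1}\cong\mathcal{K}$ induces $E(\Sigma\mathcal{A},\Sigma\mathcal{B}\gtp\F_{1,1})\cong E(\Sigma\mathcal{A},\Sigma\mathcal{B})$. Composing the chain gives $E(\mathcal{A},\mathcal{B})\cong E(\Sigma\mathcal{A},\Sigma\mathcal{B})$.

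The step needing the most care is keeping track of the Clifford-algebra labels: the transport in Theorem~\ref{duality} sends a factor $\F_{p,q}$ to its $E$-inverse $\F_{q,p}$, so that the two copies of $\F_{1,0}$ really do cancel (to $\F_{1,1}$); if one instead carried the labels across unchanged, the suspensions would accumulate to $\Sigma^2$, which is invisible in the complex case by two-periodicity ($\C_{2,0}\cong\C_{1,1}$) but genuinely wrong in the real case, where $\R_{2,0}$ is not $E$-trivial. Apart from this bookkeeping there is no obstacle: every link in the chain is either the $E$-product with a fixed invertible element --- the Bott element, externally tensored with the identity on the relevant category, which is precisely what makes the two Bott isomorphisms natural --- or is induced by a fixed $\ast$-isomorphism, so naturality in $\mathcal{A}$ and $\mathcal{B}$ and the group-homomorphism property are automatic, and no new construction or estimate is required.
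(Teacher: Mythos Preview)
Your proposal is correct and uses the same ingredients as the paper: Bott periodicity, Theorem~\ref{duality}, and the cancellation $\mathcal{K}\gtp\F_{1,1}\cong\mathcal{K}$. The only difference is organisational---the paper applies Bott once (say in the first variable) to reach $E(\Sigma\mathcal{A}\gtp\F_{1,0},\mathcal{B})$ and then invokes the preceding proposition (equivalently, duality with the $E$-inverse pair $(\F_{1,0},\Sigma)$) to convert the single Clifford factor directly into a suspension on the second variable, whereas you apply Bott in both variables and then cancel the two $\F_{1,0}$ factors simultaneously; both routes are equally short and neither requires anything the other does not.
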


\section{Long Exact Sequences}

\begin{definition}
Let $\mathcal J$, $\mathcal B$, and $\mathcal A$ be $C^\ast$-categories with the same set of objects. Let $i\colon {\mathcal J}\rightarrow {\mathcal B}$ and $j\colon {\mathcal B}\rightarrow {\mathcal A}$ be $\ast$-functors such that $i(a)=a$ and $j(a)=a$ for each object $a\in \Ob ({\mathcal A})$. Then we call the sequence of $\ast$-functors
\[ 0 \rightarrow {\mathcal J}\stackrel{i}{\rightarrow} {\mathcal B}\stackrel{j}{\rightarrow} {\mathcal A}\rightarrow 0 \]
a {\em short exact sequence} if for all $a,b\in \Ob ({\mathcal J})$ the sequence of morphism sets
\[ 0 \rightarrow \Hom (a,b)_{\mathcal J}\stackrel{i}{\rightarrow} \Hom(a,b)_{\mathcal B}\stackrel{j}{\rightarrow} \Hom (a,b)_{\mathcal A}\rightarrow 0 \]
is a short exact sequence of abelian groups.
\end{definition}
If 
\[ 0 \rightarrow {\mathcal J}\stackrel{i}{\rightarrow} {\mathcal B}\stackrel{j}{\rightarrow} {\mathcal A}\rightarrow 0 \]
is a short exact sequence of $C^\ast$-categories, the $C^\ast$-category $\mathcal J$ is isomorphic under the $\ast$-functor $i$ to a $C^\ast$-ideal in $\mathcal B$, and the $C^\ast$-category $\mathcal A$ is isomorphic to the quotient ${\mathcal B}/{\mathcal J}$, with the map $j$ the quotient map. The proof is similar to the corresponding result for abelian groups.
\begin{proposition}\label{SESgivesasy}
Let 
\[ 0\rightarrow {\mathcal J}\rightarrow {\mathcal B}\stackrel{\pi}{\rightarrow} {\mathcal A}\rightarrow 0 \]
be a short exact sequence of $C^\ast$-categories, where the $C^\ast$-category $\mathcal J$ is a $C^\ast$-ideal in the category $\mathcal B$. Let $\{ u_t^a \ |\ t\in [1,\infty ), a\in \Ob ({\mathcal B}) \}$ be a quasi-central set of approximate units for the pair $({\mathcal B},{\mathcal J})$. Then there is an asymptotic functor $\sigma \colon \Sigma {\mathcal A}\dashrightarrow {\mathcal J}$ such that if $s\colon \Hom (a,b)_{\mathcal A}\rightarrow \Hom (a,b)_{\mathcal B}$ is a set of continuous sections, then
\[ \lim_{t\rightarrow \infty} \| \sigma_t (f\otimes x) - f(u_t)s(x) \| =0 \]
for all $f\in \Sigma$, $x\in \Hom (a,b)_{\mathcal A}$.
\end{proposition}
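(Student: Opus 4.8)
The plan is to mimic the classical construction of the asymptotic morphism associated to a short exact sequence of $C^\ast$-algebras (as in \cite{CH} or \cite{Black}), carried out uniformly over all morphism sets. First I would fix a family of continuous set-theoretic sections $s\colon \Hom (a,b)_{\mathcal A}\rightarrow \Hom (a,b)_{\mathcal B}$ for the quotient map $\pi$; these exist on each Banach space by the Bartle--Graves selection theorem, and since $\mathcal J$, $\mathcal B$, $\mathcal A$ share the same object set we may choose them independently on each morphism set. Since $\Sigma = C_0(0,1)$ is generated as a $C^\ast$-algebra by functions vanishing at both endpoints, and $\Sigma {\mathcal A} = \Sigma \gtp {\mathcal A}$ has morphism sets spanned by elementary tensors $f\otimes x$, it suffices to define $\sigma_t$ on such elements by the formula
\[ \sigma_t(f\otimes x) = f(u_t^b)\, s(x) \qquad f\in \Sigma,\ x\in \Hom(a,b)_{\mathcal A}, \]
noting that $f(u_t^b)\in \Hom(b,b)_{\mathcal J}$ by functional calculus (using $f(0)=0$), so $f(u_t^b)s(x)\in \Hom(a,b)_{\mathcal J}$ because $\mathcal J$ is a $C^\ast$-ideal. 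I would then define $\sigma_t$ on a general morphism by linear extension, after which the stated limit formula becomes the defining property.

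The bulk of the work is checking the asymptotic-functor axioms. Continuity and boundedness of $t\mapsto f(u_t^b)s(x)$ follow from norm-continuity of the approximate units and functional calculus, with bound $\|f\|\cdot\|s(x)\|$. Linearity is immediate from the formula on elementary tensors. The multiplicativity condition
\[ \lim_{t\to\infty}\| \sigma_t((f\otimes x)(g\otimes y)) - \sigma_t(f\otimes x)\sigma_t(g\otimes y)\| = 0 \]
is the crucial point: by bilinearity and density it reduces to the case $f=g$ the generator $h(x)=x(1-x)$ (or to monomials), and one must compare $h(u_t^b)\,s(xy)$ with $h(u_t^b)s(x)\,h(u_t^a)s(y)$. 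Here one uses that $s(x)s(y) - s(xy)$ lies in $\mathcal J$ (since $\pi$ is multiplicative), that $f(u_t^b)\,j - j\,f(u_t^b)\to 0$ and, for $j$ in the ideal, $f(u_t^b)j\to 0$ when $f(1)=0$ — precisely the content of Lemma~\ref{techexact}. The involution condition is handled the same way, using $s(x^\ast)-s(x)^\ast\in\mathcal J$ and self-adjointness of $f(u_t^b)$ for real $f$.

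The step I expect to be the main obstacle is making the reduction to generators genuinely rigorous at the level of the tensor product $\Sigma\gtp{\mathcal A}$: one needs that the subspace of $\Hom(a,b)_{\Sigma{\mathcal A}}$ on which the asymptotic identities already hold is closed (so that density of polynomials in $\{f\in C_0(0,1)\}$ suffices) and that the estimates are uniform in the $\mathcal A$-variable, which requires bounding $\|s(x)\|$ on a neighbourhood — not automatic since $s$ need not be linear, though it can be taken continuous and bounded on bounded sets. Once these uniformities are in place, the argument is the algebroid analogue of Connes--Higson and goes through verbatim; I would also remark that $\sigma$ does not depend on the choice of $s$ up to equivalence of asymptotic functors, since two sections differ by a map into $\mathcal J$, killed in the limit by the second part of Lemma~\ref{techexact}.
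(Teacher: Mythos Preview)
Your proposal is correct and follows essentially the same route as the paper: both define $\sigma_t(f\otimes x)=f(u_t^b)s(x)$ on elementary tensors, both invoke Lemma~\ref{techexact} to verify the asymptotic-functor axioms, and both establish independence of the section $s$ via the second clause of that lemma applied to $s(x)-s'(x)\in\mathcal J$. The paper's proof is considerably more terse---it simply asserts that Lemma~\ref{techexact} makes the definition work and then passes to the section-independence argument---whereas you spell out the multiplicativity and involution checks explicitly and flag the density/uniformity issues in extending from elementary tensors; these details are genuinely needed but are left implicit in the paper.
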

\begin{proof}
Let $\Sigma = \Sigma {\mathbb F}$. Observe that $\Sigma {\mathcal A} = \Sigma \otimes {\mathcal A}$. Then by Lemma \ref{techexact}, we can define a $\ast$-functor $\sigma \colon \Sigma {\mathcal A}\dashrightarrow {\mathcal J}$ by writing
\[ \sigma_t (f\otimes x) = f(u_t)s(x) . \]
Certainly
\[ \lim_{t\rightarrow \infty} \| \sigma_t (f\otimes x) - f(u_t)s(x) \| =0 .\]
Let $s'\colon \Hom (a,b)_{\mathcal B}\rightarrow \Hom (a,b)_{\mathcal A}$ be a different set of continuous sections. Define $\sigma' \colon \Sigma {\mathcal A}\dashrightarrow {\mathcal J}$ by writing $\sigma'_t (f\otimes x) = f(u_t)s'(x)$. Then again by  lemma \ref{techexact}, 
\[ \lim_{t\rightarrow \infty} \|  f(u_t)(s(x)-s'(x)) \| =0 \]
In particular, it follows that
\[ \lim_{t\rightarrow \infty} \| \sigma_t (f\otimes x) - f(u_t)s'(x) \| =0 \]
and we are done.
\end{proof}

In order to show that $E$-theory is half exact in this framework, we first show that we have exactness when we consider the mapping cylinder $\mathcal{C}_{\alpha}$ of the map $\alpha \colon \mathcal{A} \rightarrow \mathcal{A}/\mathcal{J}$ gives an exact sequence and then use some theory to show that for an ideal we have that $J$ and $\mathcal{C}_{\alpha}$ are inverses in the $E$-theory category.
\begin{lemma}\label{exactinthemiddle}
For a graded $C^\ast$-category $\mathcal{B}$, the following are exact in the middle. 
\begin{enumerate}
\item $E(\mathcal{B},\mathcal{C}_\alpha) \xrightarrow{\beta_\ast} E(\mathcal{B},\mathcal{A}) \xrightarrow{\alpha_\ast} E(\mathcal{B},\mathcal{A}/\mathcal{J})$ is exact,
\item $E(\mathcal{A}/\mathcal{J},\mathcal{B}) \xrightarrow{\alpha^\ast} E(\mathcal{A},\mathcal{B}) \xrightarrow{\beta^\ast} E(\mathcal{C}_\alpha,\mathcal{B})$ is exact.
\end{enumerate}
\end{lemma}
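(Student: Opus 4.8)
The plan is to reduce both statements to the classical exactness-in-the-middle result for $E$-theory of $C^\ast$-algebras (as in \cite{GHT, Black}), applied morphism set by morphism set, together with the mapping cylinder pullback diagram recorded after the definition of ${\mathcal C}_\alpha$. Throughout I will use the simplified formulation $E({\mathcal X},{\mathcal Y}) = \llbracket S\gtp {\mathcal X}\gtp {\mathcal K}, {\mathcal Y}\gtp {\mathcal K}\rrbracket$, and I will write $\alpha\colon {\mathcal A}\to {\mathcal A}/{\mathcal J}$ for the quotient $\ast$-functor (which is the identity on objects) and $\beta\colon {\mathcal C}_\alpha \to {\mathcal A}$ for the canonical projection appearing in the pullback square; note that $\alpha\circ\beta$ factors through the evaluation $E_0\colon C[0,1]\gtp{\mathcal B}\to{\mathcal B}$ composed with $E_1$, hence $\alpha\circ\beta$ is homotopic to the constant $\ast$-functor, so by homotopy invariance $\alpha_\ast\circ\beta_\ast = 0$ and $\beta^\ast\circ\alpha^\ast = 0$. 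This gives the easy inclusions $\Im\beta_\ast \subseteq \ker\alpha_\ast$ and $\Im\alpha^\ast \subseteq \ker\beta^\ast$.

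For the reverse inclusions I would argue as follows. First, observe that ${\mathcal C}_\alpha$ is, up to a canonical $\ast$-isomorphism, the mapping cone construction: since $\alpha$ is a surjection with kernel ${\mathcal J}$, there is a $\ast$-functor ${\mathcal J}\to {\mathcal C}_\alpha$ which on each morphism set realizes ${\mathcal J}(a,b)$ as the ``loops ending at a section of the quotient'' and which is an $E$-theory equivalence; more precisely, one checks that the inclusion ${\mathcal C}_\alpha \hookrightarrow C[0,1]\gtp {\mathcal B}$ composed with the two evaluations fits ${\mathcal C}_\alpha$ into a short exact sequence $0\to \Sigma{\mathcal A}\gtp{\mathcal J}$-ish mapping cone sequence whose relevant term is $E$-theoretically ${\mathcal J}$. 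Rather than redo this, I would invoke the $C^\ast$-algebra fact: for each pair of objects $a,b$ the morphism sets give a short exact sequence of graded $C^\ast$-algebras $0\to \Hom(a,b)_{\mathcal J}\to \Hom(a,b)_{\mathcal B}\to\Hom(a,b)_{\mathcal A/\mathcal J}\to 0$, and the classical mapping-cylinder exactness theorem (Blackadar 25.3, Guentner--Higson--Trout) is proved by a diagram chase using a quasicentral set of approximate units and the asymptotic functor $\sigma$ of Proposition~\ref{SESgivesasy}. Because Definition~\ref{separable} and Theorem~\ref{composition} were arranged precisely so that that argument transcribes verbatim, the same diagram chase — now carried out at the level of asymptotic functors between $C^\ast$-categories rather than algebras — produces, given $[\varphi]\in E({\mathcal B},{\mathcal A})$ with $\alpha_\ast[\varphi]=0$, an explicit element of $E({\mathcal B},{\mathcal C}_\alpha)$ mapping to it, and dually for part (2).

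Concretely, for (1): if $\alpha_\ast[\varphi]=0$ then $(\alpha\gtp\id)\circ\varphi$ is homotopic to $0$ as an asymptotic functor $S\gtp{\mathcal B}\gtp{\mathcal K}\dashrightarrow ({\mathcal A}/{\mathcal J})\gtp{\mathcal K}$; choosing such a homotopy $h_t$ and a continuous section $s$ of the quotient on each morphism set, the pair $(\varphi_t, \text{(path built from }h_t, s, u_t\text{)})$ lands in ${\mathcal C}_\alpha\gtp{\mathcal K}$ by the defining pullback, and Lemma~\ref{techexact} together with the quasicentral approximate unit estimates shows this pair is genuinely an asymptotic functor into ${\mathcal C}_\alpha$ whose image under $\beta_\ast$ is $[\varphi]$. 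Part (2) is the formally dual argument, precomposing instead of postcomposing, and uses the same pullback description of ${\mathcal C}_\alpha$ together with homotopy invariance in the first variable (Corollary~\ref{BP2} and the homotopy invariance theorem).

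The main obstacle I expect is purely bookkeeping rather than conceptual: one must check that the ``path'' component of the pair one writes down really does assemble into a single asymptotic functor — i.e.\ that the asymptotic linearity, multiplicativity, $\ast$-compatibility and gradedness estimates hold uniformly in $t$ and for the composition law of ${\mathcal C}_\alpha$, which has that $f(0)=\alpha(x)$ constraint. This is where Lemma~\ref{techexact} (the estimates $\|bf(u_t^a)-f(u_t^b)b\|\to 0$ and $\|f(u_t^b)b\|\to 0$ for $f(0)=f(1)=0$, $b\in{\mathcal J}$) does all the work, exactly as in the $C^\ast$-algebra case; the only genuinely new point over the algebra setting is that the approximate units now carry an object label $a$, so one must be careful that every estimate is applied with matching source/target objects — but quasicentrality as defined already encodes precisely the cross-object relation $\|u_t^b x - x u_t^a\|\to 0$, so no new input is needed. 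I would therefore present the proof as: (i) the easy half via homotopy invariance; (ii) the hard half by transcribing Blackadar's diagram chase, citing Proposition~\ref{SESgivesasy}, Lemma~\ref{techexact}, and Theorem~\ref{composition} at the points where they are used, and writing out only the construction of the lifting asymptotic functor and the verification that $\beta_\ast$ (resp.\ $\alpha^\ast$) sends it to the given class.
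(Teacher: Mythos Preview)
Your easy inclusions via homotopy invariance are fine and match the paper. The two hard halves, however, diverge from the paper in different ways.

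For part (1) you overshoot. The paper does not touch approximate units, sections, or Lemma~\ref{techexact} here at all: the argument is purely formal from the pullback description of $\mathcal{C}_\alpha$. Given $[\varphi]\in\ker\alpha_\ast$, choose a null-homotopy $\theta$ of $\alpha\circ\varphi$; this is precisely an asymptotic functor $S\gtp\mathcal{B}\gtp\mathcal{K}\dashrightarrow C_0[0,1)\gtp(\mathcal{A}/\mathcal{J})\gtp\mathcal{K}$ with evaluation at $0$ equal to $\alpha\circ\varphi$. The pair $\psi_t=(\varphi_t,\theta_t)$ is then literally an asymptotic functor into $\mathcal{C}_\alpha\gtp\mathcal{K}$, and $\beta_\ast[\psi]=[\varphi]$. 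No quasicentral estimates are needed; the machinery you invoke is used elsewhere in the paper (for $\sigma\colon\Sigma\mathcal{A}\dashrightarrow\mathcal{J}$ and for the equivalence $\mathcal{J}\simeq\mathcal{C}_\alpha$), not for this lemma.

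For part (2) there is a genuine gap. The argument is \emph{not} formally dual to (1): the pullback property of $\mathcal{C}_\alpha$ controls maps \emph{into} $\mathcal{C}_\alpha$, and precomposing gives you no mechanism to factor a map out of $\mathcal{A}$ through the quotient $\mathcal{A}/\mathcal{J}$. The paper instead suspends. One uses the inclusion $i\colon\Sigma(\mathcal{A}/\mathcal{J})\hookrightarrow\mathcal{C}_\alpha$ given on morphisms by $f\mapsto(0\oplus f)$, takes the chosen null-homotopy $\eta$ of $\varphi\circ(\id_S\gtp q)$ (where $q\colon\mathcal{C}_\alpha\to\mathcal{A}$ is the projection), reflects it to obtain $\tilde{\eta}_t\colon S\gtp\mathcal{C}_\alpha\gtp\mathcal{K}\dashrightarrow\Sigma\mathcal{B}\gtp\mathcal{K}$, and defines $\theta_t:=\tilde{\eta}_t\circ i$. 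One then checks that $\theta\circ(\id_S\gtp\Sigma\alpha)$ is asymptotically equivalent, hence homotopic, to $\Sigma\varphi$, and finally invokes the suspension isomorphism of Corollary~\ref{BP2} to transport exactness back from $E(\Sigma\mathcal{A},\Sigma\mathcal{B})$ to $E(\mathcal{A},\mathcal{B})$. Your sentence ``precomposing instead of postcomposing, and uses the same pullback description'' does not locate this construction; without the inclusion $i$ and the passage through $\Sigma$, there is no candidate for the preimage under $\alpha^\ast$.
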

\begin{proof} 
\begin{enumerate}
\item Let $\varphi_t \colon S \widehat{\otimes} \mathcal{B} \widehat{\otimes} \mathcal{K} \dashrightarrow \mathcal{A} \widehat{\otimes} \mathcal{K}$, and $[\varphi_t] \in E_g(\mathcal{B}, \mathcal{A})$ be such that $\alpha_\ast[\varphi_t] =0$, i.e $[\varphi_t] \in \text{Ker}~ {\alpha}_\ast$. Then $\alpha \circ \varphi_t \sim_h 0$. 
Now let $\theta \colon S \widehat{\otimes} \mathcal{B} \widehat{\otimes} \mathcal{K} \dashrightarrow \mathcal{A} \widehat{\otimes} \mathcal{K} \widehat{\otimes} C[0,1]$ be such that $\alpha \circ\theta_t$ is a homotopy between $\alpha \circ \varphi_t$ and $0$. Then write 
\[\theta \colon  S \widehat{\otimes} \mathcal{B} \widehat{\otimes} \mathcal{K} \dashrightarrow \mathcal{A} \widehat{\otimes} C_0[0,1)  \widehat{\otimes} \mathcal{K} \cong C\mathcal{A} \widehat{\otimes} \mathcal{K},\]
and define $\psi \colon S \widehat{\otimes} \mathcal{B} \widehat{\otimes} \mathcal{K} \dashrightarrow \mathcal{C}_{\alpha} \widehat{\otimes} \mathcal{K}$ on objects as the identity and on morphisms such that
\[ \psi_t = \varphi_t \oplus \theta_t.\] 
Then $\beta \circ \psi_t = \varphi_t$, and $\beta_{\ast} [\psi_t] = [\varphi_t]$ and so $[\varphi_t] \in \text{Im} ~\beta_{\ast}$. So $\text{Ker} \alpha_{\ast} \subseteq \text{Im}~ \alpha_{\ast}$.
Conversely, from homological algebra we know that $\text{Im}~ \beta_{\ast} \subseteq \text{Ker} ~\alpha_{\ast}$ is equivalent to $\alpha_{\ast} \beta_{\ast}=0$. 
Now for $x_t \in E(\mathcal{B},\mathcal{C}_{\alpha})$
\begin{align*}
\alpha_{\ast}\beta_{\ast}(x_t) 
& = \alpha_{\ast}(\beta \widehat{\otimes} \text{id}_{\mathcal{K}} \circ x_t) \\
& = \alpha\widehat{\otimes} \text{id}_{\mathcal{K}} \circ\beta\widehat{\otimes} \text{id}_{\mathcal{K}} \circ x_t  \\
& = (\alpha \circ \beta) \widehat{\otimes} \text{id}_{\mathcal{K}} \circ x_t  \\
& = 0
\end{align*}
since $\alpha \circ \beta$ is equal to the identity. 
\item
Firstly $\beta^\ast \circ \alpha^\ast = 0$ so $\text{Im}~ 
\alpha^{\ast} \subseteq \text{Ker} ~\beta^{\ast}$ by a similar method to part(1). Now we show that $\text{Ker} ~\beta^{\ast} \subseteq \text{Im}~ 
\alpha^{\ast}$.
Let $\varphi_t \colon S \widehat{\otimes} \mathcal{A} \widehat{\otimes} \mathcal{K} \dashrightarrow \mathcal{B} \widehat{\otimes} \mathcal{K}$ and $q \colon \mathcal{C}_
\alpha \widehat{\otimes} \mathcal{K} \rightarrow \mathcal{A} \widehat{\otimes} \mathcal{K}$ be the identity on objects and a projection on morphisms. Then we want $\theta_t \colon S \widehat{\otimes} \Sigma \mathcal{A}/\mathcal{J} \widehat{\otimes} \mathcal{K} \dashrightarrow \Sigma \mathcal{B} \widehat{\otimes} \mathcal{K}$ such that 
\[[\theta_t \circ (\text{id}_{\mathcal{S}} \circ \Sigma 
\alpha)] = \Sigma[\varphi] \in E(\Sigma \mathcal{A}, \Sigma \mathcal{B}),\]
where $\alpha \colon S \widehat{\otimes} \mathcal{A} \widehat{\otimes} \mathcal{K} \rightarrow  S \widehat{\otimes} \mathcal{A}/\mathcal{J} \widehat{\otimes} \mathcal{K}$.
Now let $\eta \colon S \widehat{\otimes} \mathcal{C}_\alpha \widehat{\otimes} \mathcal{K} \rightarrow I\mathcal{B} \widehat{\otimes} \mathcal{K}$ (where $I=[0,1]$) be a homotopy between $\varphi_t \circ (\id \widehat{\otimes} q)$ and $0$. Then by symmetry of homotopy we can obtain
\[\widetilde{\eta_{t}} \colon S \widehat{\otimes} \mathcal{C}_\alpha \widehat{\otimes} \mathcal{K} \rightarrow I_1\mathcal{B} \widehat{\otimes} \mathcal{K},\]
where $I_1=[0,1]$.
Now we also have an inclusion 
\[i \colon \Sigma S \widehat{\otimes} \mathcal{A}/\mathcal{J} \widehat{\otimes} \mathcal{K} \cong S \widehat{\otimes} \Sigma \mathcal{A}/\mathcal{J} \widehat{\otimes} \mathcal{K} \rightarrow S \widehat{\otimes} \mathcal{C}_{\alpha}\widehat{\otimes} \mathcal{K},\]
defined on objects as the identity and on morphisms by
\[i(g \widehat{\otimes} f \widehat{\otimes} k) = g \widehat{\otimes} (0 \oplus f) \widehat{\otimes} k,\]
where $g \in S$, $f \colon [-1,1] \rightarrow \mathcal{A}/\mathcal{J}$  and $k \in \mathcal{K}$. 
Then $(\id_S\widehat{\otimes} q) \circ i = 0$.
Define $\theta_t \colon S \widehat{\otimes} \Sigma  \mathcal{A}/\mathcal{J} \widehat{\otimes} \mathcal{K} \dashrightarrow \Sigma \mathcal{B} \widehat{\otimes} \mathcal{K}$ by $\theta_t = \widetilde{\eta_t} \circ i$. Then we need to show that $\theta_t \circ (\id_S \circ \Sigma \alpha)$ is homotopic to $\Sigma \varphi$. 
Now $\Sigma \varphi_t = \widetilde{\eta_t} \circ i \circ (\id_S \circ \Sigma \alpha)$, and so 
\[ ||\theta_t \circ (\text{id}_S\circ \Sigma \alpha) - (\Sigma \varphi_t) || \to 0,\] 
as $t \to \infty$ and asymptotic equivalence implies homotopic equivalence by Proposition~\ref{Equivalent implies homotopic}, half exactness follows. 
\end{enumerate}
\end{proof}

\begin{proposition}\label{twoSESgivesquarediagram}
Let 
\[\xymatrixcolsep{3pc}\xymatrixrowsep{3pc}\xymatrix{
0 \ar[r] & \mathcal{J}_0 \ar[r] \ar[d] & \mathcal{B}_0 \ar[r] \ar[d] & \mathcal{A}_0\ar[d]  \ar[r] &0\\
0 \ar[r] & \mathcal{J}_1\ar[r]^{} &\mathcal{B}_1 \ar[r]^{} &\mathcal{A}_1 \ar[r] & 0}\]
be a commuting diagram of short exact sequences of separable $C^\ast$-categories. 
Then this gives a commuting diagram  
\[\xymatrixcolsep{3pc}\xymatrixrowsep{3pc}\xymatrix{
 \Sigma \mathcal{A}_0  \ar[r] \ar[d] & \mathcal{J}_0 \ar[d] \\
\Sigma \mathcal{A}_1  \ar[r] & \mathcal{J}_1 }\]
in the $E$-theory category.
\end{proposition}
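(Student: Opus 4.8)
The horizontal arrows of the square are the boundary elements supplied by Proposition~\ref{SESgivesasy}, so the plan is to build explicit representatives for these and then compare the two composites around the square as asymptotic functors. Write $\phi_{\mathcal A}\colon{\mathcal A}_0\to{\mathcal A}_1$, $\phi_{\mathcal B}\colon{\mathcal B}_0\to{\mathcal B}_1$, $\phi_{\mathcal J}\colon{\mathcal J}_0\to{\mathcal J}_1$ for the vertical graded $\ast$-functors, and $i_k,j_k$ for the horizontal ones in the $k$-th row, so that $\phi_{\mathcal B}\circ i_0=i_1\circ\phi_{\mathcal J}$ and $j_1\circ\phi_{\mathcal B}=\phi_{\mathcal A}\circ j_0$; in particular, identifying ${\mathcal J}_k$ with its image in ${\mathcal B}_k$, the functor $\phi_{\mathcal J}$ is the restriction of $\phi_{\mathcal B}$. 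Using that each pair consisting of a separable $C^\ast$-category and a $C^\ast$-ideal has a quasicentral set of approximate units, I would choose such families $(u^a_t)$ for $({\mathcal B}_0,{\mathcal J}_0)$ and $(w^a_t)$ for $({\mathcal B}_1,{\mathcal J}_1)$ together with continuous sections $s_0,s_1$ of $j_0,j_1$, obtaining by Proposition~\ref{SESgivesasy} graded asymptotic functors $\sigma^0_t(f\otimes x)=f(u_t)s_0(x)$ and $\sigma^1_t(f\otimes x)=f(w_t)s_1(x)$. These represent the top and bottom arrows; the vertical arrows are the $E$-theory classes of $\Sigma\phi_{\mathcal A}=\id_\Sigma\widehat{\otimes}\phi_{\mathcal A}$ and of $\phi_{\mathcal J}$. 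Since products with classes of $\ast$-functors are the functorially induced maps, commutativity of the square is exactly the equality
\[ (\phi_{\mathcal J})_\ast[\sigma^0]=(\Sigma\phi_{\mathcal A})^\ast[\sigma^1]\qquad\text{in }E(\Sigma{\mathcal A}_0,{\mathcal J}_1). \]

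To prove this I would first compute both sides as asymptotic functors $\Sigma{\mathcal A}_0\dashrightarrow{\mathcal J}_1$. The left side is represented by $\phi_{\mathcal J}\circ\sigma^0$, and since functional calculus for functions vanishing at $0$ commutes with $\ast$-homomorphisms this is $f\otimes x\mapsto f(\phi_{\mathcal B}(u_t))\,\phi_{\mathcal B}(s_0(x))$; the right side is represented by $\sigma^1\circ(\Sigma\phi_{\mathcal A})$, that is, $f\otimes x\mapsto f(w_t)\,s_1(\phi_{\mathcal A}(x))$. Because $j_1(\phi_{\mathcal B}(s_0(x)))=\phi_{\mathcal A}(x)$, the morphism $\phi_{\mathcal B}(s_0(x))$ is a continuous lift of $\phi_{\mathcal A}(x)$ along $j_1$, so $s_1(\phi_{\mathcal A}(x))-\phi_{\mathcal B}(s_0(x))\in\Hom(a,b)_{{\mathcal J}_1}$, and the estimate of Lemma~\ref{techexact} used in the proof of Proposition~\ref{SESgivesasy} gives $\|f(w_t)(s_1(\phi_{\mathcal A}(x))-\phi_{\mathcal B}(s_0(x)))\|\to0$; hence the right side is equivalent to $f\otimes x\mapsto f(w_t)\,\phi_{\mathcal B}(s_0(x))$. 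By Proposition~\ref{Equivalent implies homotopic} it then remains only to homotope $f\otimes x\mapsto f(\phi_{\mathcal B}(u_t))\phi_{\mathcal B}(s_0(x))$ to $f\otimes x\mapsto f(w_t)\phi_{\mathcal B}(s_0(x))$.

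For that homotopy I would interpolate the two approximate units: put $v^a_{t,r}=(1-r)\phi_{\mathcal B}(u^a_t)+r\,w^a_t\in\Hom(a,a)_{{\mathcal J}_1}$ for $r\in[0,1]$, a positive contraction, and define $\theta\colon\Sigma{\mathcal A}_0\dashrightarrow I{\mathcal J}_1$ by $\theta_t(f\otimes x)(r)=f(v_{t,r})\,\phi_{\mathcal B}(s_0(x))$, so that $E_0\circ\theta$ and $E_1\circ\theta$ are the two functors above. That $\theta$ is a graded asymptotic functor would be checked exactly as for $\sigma^0$ in Proposition~\ref{SESgivesasy}, with all required limits reducing, by the density argument of Lemma~\ref{techexact}, to the two uniform estimates $\lim_{t\to\infty}\sup_{r}\|[g(v_{t,r}),\phi_{\mathcal B}(c)]\|=0$ for $c\in\Hom(a,b)_{{\mathcal B}_0}$ and $\lim_{t\to\infty}\sup_{r}\|h(v_{t,r})\phi_{\mathcal B}(e)\|=0$ for $e\in\Hom(a,b)_{{\mathcal J}_0}$ with $h(0)=h(1)=0$. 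The first reduces to $g(x)=x$, where $[v_{t,r},\phi_{\mathcal B}(c)]=(1-r)\phi_{\mathcal B}([u_t,c])+r[w_t,\phi_{\mathcal B}(c)]$ tends to $0$ uniformly in $r$ by quasicentrality of $(u_t)$ in ${\mathcal B}_0$ and of $(w_t)$ in ${\mathcal B}_1$; the second reduces to $h(x)=x(1-x)$, where expanding the convex combination produces four terms, each controlled by $\|(1-u_t)e\|\to0$ and $\|u_te-e\|\to0$ pushed through $\phi_{\mathcal B}$, together with $\|(1-w_t)\phi_{\mathcal B}(e)\|\to0$, $\|w_t\phi_{\mathcal B}(e)-\phi_{\mathcal B}(e)\|\to0$, and contractivity of the approximate-unit factors. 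Chaining the equivalences of the previous paragraph with this homotopy yields $(\phi_{\mathcal J})_\ast[\sigma^0]=(\Sigma\phi_{\mathcal A})^\ast[\sigma^1]$, hence the commuting square.

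The delicate point, and the reason there is no one-line proof, is the verification of these last two estimates: $\phi_{\mathcal B}(u_t)$ is not by itself a quasicentral set of approximate units for $({\mathcal B}_1,{\mathcal J}_1)$ — its values need neither be dense in ${\mathcal J}_1$ nor asymptotically commute with all of ${\mathcal B}_1$ — so one cannot quote a uniqueness statement for quasicentral approximate units; the interpolation back to $w_t$ is essential, and the estimates go through only because throughout the construction $v_{t,r}$ is multiplied solely against morphisms of ${\mathcal B}_1$ lying in the image of $\phi_{\mathcal B}$.
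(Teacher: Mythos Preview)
Your argument is correct and is precisely the sort of explicit verification the paper's proof stands for: the paper itself gives no independent argument but defers to Proposition~5.8 of \cite{GHT}, noting only that the $C^\ast$-algebra proof carries over because all categories share the same objects and the morphism sets behave as in the algebra case. Your reduction to section-independence (via Lemma~\ref{techexact}) followed by a convex interpolation $v_{t,r}=(1-r)\phi_{\mathcal B}(u_t)+r\,w_t$ between the two approximate units is the standard route, and your closing remark that the estimates succeed only because $v_{t,r}$ is tested solely against morphisms in the image of $\phi_{\mathcal B}$ is exactly the point one must isolate.
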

The proof follows precisely from that of Proposition 5.8 in~\cite{GHT}, since all $C^\ast$-categories have the same objects, the morphisms sets have approximate units indexed the same and the morphisms satisfy the properties that the short exact sequence has for $C^\ast$-algebras.  
Likewise the Proposition below is proves just as in the case of Proposition~5.9 in~\cite{GHT}.

\begin{proposition}
Let $\sigma \colon \Sigma \mathcal{A} \dashrightarrow \mathcal{J}$ be the asymptotic functor associated to the short exact sequence
\[ 0 \rightarrow \mathcal{J} \rightarrow \mathcal{B} \rightarrow \mathcal{A} \rightarrow 0.\]
Then if $\mathcal{D}$ is a separable $C^\ast$-category and if 
\[ \sigma_{\mathcal{D}} \colon \Sigma \mathcal{A} \widehat{\otimes} \mathcal{D} \dashrightarrow \mathcal{J} \widehat{\otimes} \mathcal{D}\] 
is the morphism associated to the short exact sequence
\[ 0 \rightarrow \mathcal{J} \widehat{\otimes} \mathcal{D}\rightarrow \mathcal{B}\widehat{\otimes} \mathcal{D} \rightarrow \mathcal{A} \widehat{\otimes} \mathcal{D}\rightarrow 0,\]
then $\sigma_{\mathcal{D}}$ is equal to $\sigma \otimes \id_\mathcal{D}$.
\end{proposition}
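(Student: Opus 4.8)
The plan is to verify that $\sigma\widehat{\otimes}\id_{\mathcal D}$ satisfies the defining asymptotic property of Proposition~\ref{SESgivesasy} for the short exact sequence $0\to{\mathcal J}\widehat{\otimes}{\mathcal D}\to{\mathcal B}\widehat{\otimes}{\mathcal D}\to{\mathcal A}\widehat{\otimes}{\mathcal D}\to 0$, and then to conclude by the uniqueness of the associated asymptotic functor (up to equivalence, hence up to homotopy by Proposition~\ref{Equivalent implies homotopic}). Since $\Sigma$ carries the trivial grading, the associativity isomorphism of the graded tensor product identifies $\Sigma{\mathcal A}\widehat{\otimes}{\mathcal D}$ with $\Sigma({\mathcal A}\widehat{\otimes}{\mathcal D})$, so $\sigma\widehat{\otimes}\id_{\mathcal D}$ and $\sigma_{\mathcal D}$ have the same source and target. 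By the construction of $\sigma\widehat{\otimes}\id_{\mathcal D}$, together with the description of $\sigma$ from Proposition~\ref{SESgivesasy}, for $f\in\Sigma$, $x\in\Hom(a,b)_{\mathcal A}$ and $d\in\Hom(c,c')_{\mathcal D}$ we have
\[ (\sigma\widehat{\otimes}\id_{\mathcal D})_t(f\otimes x\otimes d)=\sigma_t(f\otimes x)\otimes d, \qquad \lim_{t\to\infty}\bigl\| \sigma_t(f\otimes x)\otimes d-f(u_t^a)s(x)\otimes d\bigr\|=0, \]
where $\{u_t^a\}$ is the quasicentral set of approximate units for $({\mathcal B},{\mathcal J})$ and $s$ the family of continuous sections used to build $\sigma$.

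Next I would produce a quasicentral set of approximate units of product type for the pair $({\mathcal B}\widehat{\otimes}{\mathcal D},{\mathcal J}\widehat{\otimes}{\mathcal D})$. Since ${\mathcal D}$ is separable, the pair $({\mathcal D},{\mathcal D})$ has a quasicentral set of approximate units $\{e_t^c\}$; put $w_t^{(a,c)}=u_t^a\widehat{\otimes}e_t^c$. These elements are positive, and
\[ 1-u_t^a\widehat{\otimes}e_t^c=(1-u_t^a)\widehat{\otimes}1+u_t^a\widehat{\otimes}(1-e_t^c) \]
is a sum of positive elements, hence positive; the approximate-unit and quasicentrality conditions follow from the corresponding properties of $\{u_t^a\}$ and $\{e_t^c\}$ by the usual estimates on elementary tensors, the grading signs being trivial since $u_t^a$ and $e_t^c$ are even. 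Thus $\{w_t^{(a,c)}\}$ is a quasicentral set of approximate units, and by Proposition~\ref{SESgivesasy}, for any family $S$ of continuous sections of ${\mathcal B}\widehat{\otimes}{\mathcal D}\to{\mathcal A}\widehat{\otimes}{\mathcal D}$ the asymptotic functor $\sigma_{\mathcal D}$ satisfies $\lim_{t\to\infty}\|(\sigma_{\mathcal D})_t(f\otimes y)-f(w_t)S(y)\|=0$.

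It then remains to compare the two formulas on elementary tensors, i.e. to prove
\[ \lim_{t\to\infty}\bigl\| f(w_t)S(x\otimes d)-f(u_t^a)s(x)\otimes d\bigr\|=0, \]
after which a routine density argument reduces the general case to this one. I would write $f(w_t)S(x\otimes d)=f(w_t)(s(x)\otimes d)+f(w_t)\bigl(S(x\otimes d)-s(x)\otimes d\bigr)$. Since $S(x\otimes d)$ and $s(x)\otimes d$ both lift $x\otimes d$, their difference lies in $\Hom(a\otimes c,b\otimes c')_{{\mathcal J}\widehat{\otimes}{\mathcal D}}$; as $f(0)=f(1)=0$, Lemma~\ref{techexact} applied to the $C^\ast$-ideal ${\mathcal J}\widehat{\otimes}{\mathcal D}$ shows the second summand tends to $0$. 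For the first summand, by the Stone--Weierstrass reduction used in the proof of Lemma~\ref{techexact} it is enough to take $f(\lambda)=\sum_k c_k\lambda^k$ a polynomial with $c_0=0$; then $f(w_t)=\sum_k c_k(u_t^a)^k\widehat{\otimes}(e_t^c)^k$, so
\[ f(w_t)(s(x)\otimes d)-f(u_t^a)s(x)\otimes d=\sum_k c_k\bigl((u_t^a)^ks(x)\bigr)\otimes\bigl((e_t^c)^kd-d\bigr), \]
whose norm is at most $\sum_k|c_k|\,\|s(x)\|\,\|(e_t^c)^kd-d\|$, which tends to $0$ because $\{e_t^c\}$ is an approximate unit. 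Hence $\sigma\widehat{\otimes}\id_{\mathcal D}$ is equivalent to $\sigma_{\mathcal D}$, and so homotopic to it by Proposition~\ref{Equivalent implies homotopic}. The one delicate point is the non-linearity of the continuous sections, which prevents one from simply taking $S(x\otimes d)=s(x)\otimes d$; the identification must be carried out modulo ${\mathcal J}\widehat{\otimes}{\mathcal D}$, and this is precisely what Lemma~\ref{techexact} delivers. As indicated in the text, the argument is the $C^\ast$-category analogue of Proposition~5.9 of \cite{GHT}.
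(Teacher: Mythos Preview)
Your argument is correct and is precisely the $C^\ast$-category version of the proof of Proposition~5.9 in \cite{GHT} that the paper cites in lieu of a proof; you have supplied the details the paper omits. Two small points worth tightening: first, Proposition~\ref{SESgivesasy} as stated only asserts independence of the section, so when you invoke it for $\sigma_{\mathcal D}$ with your product approximate unit $\{w_t^{(a,c)}\}$ you are implicitly using that the homotopy class of the associated asymptotic functor is also independent of the choice of quasicentral approximate unit---this is standard (a straight-line homotopy between approximate units works), but strictly speaking it is an extra step; second, your appeal to ``$u_t^a$ and $e_t^c$ are even'' to kill the grading signs deserves a word of justification, namely that in the graded setting one may always replace a quasicentral approximate unit by its even part without loss.
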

The following is just taken from~\cite{GHT}; for a proof see Proposition 5.11. For $\mathbb{C}$ as a $C^\ast$-category, so viewed as a one object $C^\ast$-category, we obtain a natural asymptotic functor.
\begin{proposition}
The asymptotic morphism $\sigma \colon \Sigma \mathbb{C} \dashrightarrow C \mathbb{C}$ coming from the short exact sequence 
\[ 0 \rightarrow \Sigma \mathbb{C} \rightarrow C \mathbb{C} \rightarrow \mathbb{C} \rightarrow 0,\]
is just the identity morphism in the $E$-theory category. 
\end{proposition}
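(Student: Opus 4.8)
The key observation is that $\mathbb{C}$ is a $C^\ast$-category with a single object, so the short exact sequence $0\to\Sigma\mathbb{C}\to C\mathbb{C}\to\mathbb{C}\to 0$ and everything built from it lives entirely inside the world of $C^\ast$-algebras: the asymptotic functor $\sigma$ produced by Proposition~\ref{SESgivesasy} is exactly the connecting asymptotic morphism of the cone extension of the $C^\ast$-algebra $\mathbb{C}$, which, its target being the ideal, goes $\sigma\colon\Sigma\mathbb{C}\dashrightarrow\Sigma\mathbb{C}$, and the assertion is precisely Proposition~5.11 of \cite{GHT}. So the plan is to reproduce that argument in the present language. First I would fix the concrete model $C\mathbb{C}=C_0[0,1)$ and $\Sigma\mathbb{C}=C_0(0,1)$, with the quotient map $C_0[0,1)\to\mathbb{C}$ given by evaluation at $0$, so that $\sigma\colon\Sigma\mathbb{C}\dashrightarrow\Sigma\mathbb{C}$ is the functor of Proposition~\ref{SESgivesasy} attached to this extension.

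Next I would make $\sigma$ completely explicit. Since $C\mathbb{C}$, $\Sigma\mathbb{C}$ and $\mathbb{C}$ are all commutative, any approximate unit for the ideal $C_0(0,1)$ of $C_0[0,1)$ is automatically quasicentral, so I would take the explicit norm-continuous family $u_t(x)=\min\{1,tx,t(1-x)\}$ together with the explicit continuous section $s(\lambda)(x)=\lambda h(x)$, where $h\in C_0[0,1)$ is chosen with $h\equiv 1$ near $0$. By Proposition~\ref{SESgivesasy}, $\sigma$ is then the asymptotic functor $f\mapsto f(u_t)h$; a direct computation with $u_t$ and $h$ shows that this differs, by a quantity tending to $0$ as $t\to\infty$, from the $\ast$-homomorphism $\tilde\sigma_t$ which sends a function $f$ to the function equal to $x\mapsto f(tx)$ on $[0,1/t]$ and to $0$ on $[1/t,1)$. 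Hence, by Proposition~\ref{Equivalent implies homotopic}, it is enough to show that $\tilde\sigma$ is homotopic to the identity $\ast$-functor of $\Sigma\mathbb{C}$.

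For this I would write down the homotopy directly. Define $\theta\colon\Sigma\mathbb{C}\dashrightarrow\Sigma\mathbb{C}\gtp C[0,1]$ by
\[ \theta_t(f)(x,s)=f\bigl(\min\{t^s x,\,1\}\bigr), \]
where $f\in C_0(0,1)$ is extended by $0$ to the endpoints $0$ and $1$. For each $t$ this is an honest $\ast$-homomorphism (precomposition with the continuous map $(x,s)\mapsto\min\{t^s x,1\}$), so $\theta$ is in particular an asymptotic functor, and one checks immediately that $E_0\circ\theta$ is the identity $\ast$-functor of $\Sigma\mathbb{C}$ while $E_1\circ\theta=\tilde\sigma$. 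Since homotopy of asymptotic functors is an equivalence relation, $\sigma$ is equivalent to $\tilde\sigma$, which is homotopic to the identity, so $\sigma$ is homotopic to the identity $\ast$-functor; applying the construction $\varphi\mapsto[c\otimes\varphi_\oplus\otimes\id]$ that turns an asymptotic functor into an $E$-theory class then gives $[\sigma]=[\mathrm{id}_{\Sigma\mathbb{C}}]$, the identity of $E(\Sigma\mathbb{C},\Sigma\mathbb{C})$.

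The step I expect to be the main obstacle is finding the right homotopy: the naive idea of interpolating $u_t$ and the section value $h$ separately fails, because the intermediate functions one produces no longer vanish at the endpoint of the cone and so leave the suspension; the point is instead to stretch the cone coordinate through the $\ast$-homomorphisms $f\mapsto f(t^s\,\cdot\,)$. The remaining work is routine but needs a little care: one must verify that each $\theta_t(f)$ really is an element of $\Sigma\mathbb{C}\gtp C[0,1]=C_0((0,1)\times[0,1])$, i.e.\ that it is jointly continuous in $(x,s)$ across the moving locus $t^s x=1$ (which holds because $f$ vanishes there) and that it tends to $0$ as $x\to 0$ or $x\to 1$, uniformly in $s$; the asymptotic linearity, multiplicativity and $\ast$-compatibility of $\theta$ are then immediate, since everything in sight is commutative and each $\theta_t$ is a genuine $\ast$-homomorphism.
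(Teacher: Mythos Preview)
Your proposal is correct. The paper itself does not give a proof of this proposition at all: it simply states ``The following is just taken from~\cite{GHT}; for a proof see Proposition 5.11,'' so there is no in-paper argument to compare against. What you have written is an explicit unpacking of exactly that GHT argument in the present notation, and the computations you outline (the explicit quasicentral approximate unit $u_t$, the reduction to the rescaling $\ast$-homomorphisms $\tilde\sigma_t$, and the homotopy $\theta_t(f)(x,s)=f(\min\{t^s x,1\})$) all go through as you describe. You were also right to silently correct the typo in the statement: by Proposition~\ref{SESgivesasy} the target of $\sigma$ is the ideal $\Sigma\mathbb{C}$, not $C\mathbb{C}$, so that $\sigma\colon\Sigma\mathbb{C}\dashrightarrow\Sigma\mathbb{C}$ and the assertion that it represents the identity in $E(\Sigma\mathbb{C},\Sigma\mathbb{C})$ makes sense.
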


Let $\alpha \colon \mathcal{B} \rightarrow \mathcal{A}$ be a $\ast$-functor, and $\mathcal{C}_{\alpha}$ be the associated mapping cylinder. Let $\mathcal{J} \subset \mathcal{B}$ be a $C^\ast$-ideal with quasi-central approximate unit. Then consider the inclusion $\ast$-functor, $\tau \colon \mathcal{J} \rightarrow \mathcal{C}_{\alpha}$. Consider the short exact sequence: 
\[ 0 \rightarrow \Sigma \mathcal{J} \rightarrow C \mathcal{B} \rightarrow \mathcal{C}_{\alpha} \rightarrow 0.\]
By proposition~\ref{SESgivesasy}, we have an associated asymptotic functor $\sigma \colon \Sigma \mathcal{C}_{\alpha} \dashrightarrow \Sigma \mathcal{J}$. Then the following result follows from proposition~\ref{twoSESgivesquarediagram}; for exact details see Proposition~5.14 in \cite{GHT}.
\begin{proposition}\label{Jequivtomappingcylinder}
The $\ast$-functor $\Sigma \tau \colon \Sigma \mathcal{J} \rightarrow \Sigma \mathcal{C}_\alpha$ and the asymptotic functor $\Sigma \sigma \colon \Sigma \mathcal{C}_\alpha \dashrightarrow \Sigma \mathcal{J}$ are mutually inverse morphisms in the $E$-theory category.  
\end{proposition}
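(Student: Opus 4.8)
The plan is to follow the proof of Proposition~5.14 of \cite{GHT}, the main new ingredients in the categorical setting being Propositions~\ref{SESgivesasy} and~\ref{twoSESgivesquarediagram}: the latter says that a commuting diagram of short exact sequences of separable $C^\ast$-categories induces a commuting square of the associated connecting morphisms in the $E$-theory category. Two facts established earlier will be used as black boxes: that the connecting functor of the elementary suspension sequence $0\to\Sigma\F\to C\F\to\F\to 0$ is the identity of $\Sigma\F$ in the $E$-theory category, and that the connecting functor of a short exact sequence tensored (on either side) by a separable $C^\ast$-category $\mathcal D$ is $\sigma\gtp\id_{\mathcal D}$. Below, $=$ denotes equality of morphisms in the $E$-theory category and $\circ$ denotes the $E$-theory product.

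First I would show $\sigma\circ\Sigma\tau=\id_{\Sigma\mathcal J}$. Tensoring the elementary sequence with $\mathcal J$ yields the cone--suspension sequence $0\to\Sigma\mathcal J\to C\mathcal J\to\mathcal J\to 0$, whose connecting functor is $\id_{\Sigma\mathcal J}$ by the two facts above. The inclusion $\mathcal J\subseteq\mathcal B$ then induces, directly from the definitions of the cone, the suspension and the quotient $\ast$-functor $C\mathcal B\to\mathcal C_\alpha$, a commuting diagram of short exact sequences
\[
\xymatrixcolsep{3pc}\xymatrixrowsep{3pc}\xymatrix{
0 \ar[r] & \Sigma\mathcal J \ar[r] \ar[d]_{\id} & C\mathcal J \ar[r] \ar[d] & \mathcal J \ar[r] \ar[d]^{\tau} & 0 \\
0 \ar[r] & \Sigma\mathcal J \ar[r] & C\mathcal B \ar[r] & \mathcal C_\alpha \ar[r] & 0 }
\]
whose map on quotients is exactly $\tau$. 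Applying Proposition~\ref{twoSESgivesquarediagram} converts this into a commuting square in the $E$-theory category, one route of which is $\sigma\circ\Sigma\tau$ and the other of which is $\id_{\Sigma\mathcal J}$ composed with the connecting functor of the top row; since that connecting functor is $\id_{\Sigma\mathcal J}$, we obtain $\sigma\circ\Sigma\tau=\id_{\Sigma\mathcal J}$.

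For the reverse composite $\Sigma\tau\circ\sigma=\id_{\Sigma\mathcal C_\alpha}$ I would run the analogous argument of \cite{GHT}: either a further comparison of short exact sequences, mapping $0\to\Sigma\mathcal J\to C\mathcal B\to\mathcal C_\alpha\to 0$ into the elementary suspension sequence of $\mathcal C_\alpha$ so that the map on quotients is $\id_{\mathcal C_\alpha}$ and using the $E$-contractibility of the cone $C\mathcal B$ to identify the middle contribution, or, once the first identity is in hand, a short diagram chase from half-exactness (Lemma~\ref{exactinthemiddle}). Throughout, asymptotic equivalences may be traded for homotopies via Proposition~\ref{Equivalent implies homotopic}, and any difference in suspension level between $\Sigma\tau$ and $\sigma$ is absorbed by the Bott isomorphism $E(\mathcal A,\mathcal B)\cong E(\Sigma\mathcal A,\Sigma\mathcal B)$ of Corollary~\ref{BP2}. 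I expect the main obstacle to be bookkeeping rather than conceptual: one must check that these comparison diagrams of short exact sequences of $C^\ast$-categories actually commute (on objects and on each morphism set), and for the reverse direction accommodate the fact that the obvious comparison map commutes only up to homotopy; and one must check that the connecting asymptotic functors produced by Proposition~\ref{SESgivesasy} interact with the cone, suspension and inclusion $\ast$-functors as claimed. These are precisely the situations Propositions~\ref{SESgivesasy} and~\ref{twoSESgivesquarediagram} were set up to handle, so the argument of \cite{GHT} transfers with only notational changes.
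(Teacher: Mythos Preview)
Your proposal is correct and follows essentially the same route the paper takes: the paper does not write out a proof but simply states that the result follows from Proposition~\ref{twoSESgivesquarediagram} and refers the reader to Proposition~5.14 of \cite{GHT} for the details, which is exactly the argument you have sketched. One small caution: for the reverse composite do not invoke Lemma~\ref{halfexact}, since that lemma depends on the present proposition; your use of Lemma~\ref{exactinthemiddle} (the mapping-cylinder version) or the direct comparison-of-sequences argument is the right choice.
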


\begin{lemma}\label{halfexact}
Let $0 \rightarrow \mathcal{J} \xrightarrow{\beta} \mathcal{B} \xrightarrow{\alpha} \mathcal{A} \rightarrow 0$ be a short exact sequence of graded $C^\ast$-categories and let $\mathcal{D}$ be a graded $C^\ast$-category. Then 
\begin{enumerate}
\item $E(\mathcal{D},\mathcal{J}) \xrightarrow{\beta_\ast} E(\mathcal{D},\mathcal{B}) \xrightarrow{\alpha_\ast} E(\mathcal{D},\mathcal{A})$ is exact,
\item $E(\mathcal{A},\mathcal{D}) \xrightarrow{\alpha^\ast} E(\mathcal{B},\mathcal{D}) \xrightarrow{\beta^\ast} E(\mathcal{J},\mathcal{D})$ is exact.
\end{enumerate}
\end{lemma}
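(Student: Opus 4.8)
The plan is to reduce to the mapping--cylinder case of Lemma~\ref{exactinthemiddle} and then transport along the $E$-theory equivalence of Proposition~\ref{Jequivtomappingcylinder}. By the remark following the definition of short exact sequence we may assume that $\mathcal{J}$ is a $C^\ast$-ideal in $\mathcal{B}$, that $\alpha\colon\mathcal{B}\to\mathcal{A}$ is the quotient $\ast$-functor onto $\mathcal{A}=\mathcal{B}/\mathcal{J}$, and that $\beta\colon\mathcal{J}\to\mathcal{B}$ is the inclusion; as elsewhere in this section we take all categories in sight to be separable, so that a quasi-central set of approximate units for $(\mathcal{B},\mathcal{J})$ exists and the earlier propositions apply. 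Let $\mathcal{C}_\alpha$ be the mapping cylinder of $\alpha$, let $p\colon\mathcal{C}_\alpha\to\mathcal{B}$ be the projection $(x,f)\mapsto x$, and let $\tau\colon\mathcal{J}\to\mathcal{C}_\alpha$ be the inclusion $j\mapsto(j,0)$, which is legitimate because $\alpha(j)=0$. The one thing to check by hand is that $p\circ\tau=\beta$ as $\ast$-functors, which is immediate from these formulas.

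Applying Lemma~\ref{exactinthemiddle} to the triple $\mathcal{C}_\alpha\xrightarrow{p}\mathcal{B}\xrightarrow{\alpha}\mathcal{A}$ --- which is exactly the situation of that lemma, with $\mathcal{B}$ and $\mathcal{A}$ here playing the roles of its ``$\mathcal{A}$'' and ``$\mathcal{A}/\mathcal{J}$'' and $p$ the role of its projection --- gives that
\[ E(\mathcal{D},\mathcal{C}_\alpha)\xrightarrow{p_\ast}E(\mathcal{D},\mathcal{B})\xrightarrow{\alpha_\ast}E(\mathcal{D},\mathcal{A}) \]
and
\[ E(\mathcal{A},\mathcal{D})\xrightarrow{\alpha^\ast}E(\mathcal{B},\mathcal{D})\xrightarrow{p^\ast}E(\mathcal{C}_\alpha,\mathcal{D}) \]
are exact in the middle. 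Now Proposition~\ref{Jequivtomappingcylinder} says that $\Sigma\tau$ is an isomorphism in the $E$-theory category. By Bott periodicity in the form of Corollary~\ref{BP2}, suspension induces an isomorphism $E(\mathcal{X},\mathcal{Y})\cong E(\Sigma\mathcal{X},\Sigma\mathcal{Y})$ for all $\mathcal{X},\mathcal{Y}$, so it is a fully faithful endofunctor of the $E$-theory category and therefore reflects isomorphisms; hence $[\tau]\in E(\mathcal{J},\mathcal{C}_\alpha)$ is itself invertible, and so $\tau_\ast\colon E(\mathcal{D},\mathcal{J})\to E(\mathcal{D},\mathcal{C}_\alpha)$ and $\tau^\ast\colon E(\mathcal{C}_\alpha,\mathcal{D})\to E(\mathcal{J},\mathcal{D})$ are isomorphisms. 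Since the functorially induced maps are a special case of the product, functoriality applied to $p\circ\tau=\beta$ gives $\beta_\ast=p_\ast\circ\tau_\ast$ and $\beta^\ast=\tau^\ast\circ p^\ast$. As $\tau_\ast$ is onto, $\mathrm{Im}(\beta_\ast)=\mathrm{Im}(p_\ast)=\mathrm{Ker}(\alpha_\ast)$, which is statement (1); as $\tau^\ast$ is injective, $\mathrm{Ker}(\beta^\ast)=\mathrm{Ker}(p^\ast)=\mathrm{Im}(\alpha^\ast)$, which is statement (2).

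The genuinely routine ingredients are the identity $p\circ\tau=\beta$ and the elementary observation that precomposing (respectively postcomposing) a sequence exact in the middle by an isomorphism does not disturb exactness in the middle. The point needing care --- and the likeliest objection --- is the bookkeeping: first, that the abstract sequence of Lemma~\ref{exactinthemiddle} really does specialise, under the relabelling above, to the sequence at hand; and second, that the $E$-theory equivalence $\mathcal{J}\simeq\mathcal{C}_\alpha$ of Proposition~\ref{Jequivtomappingcylinder} is implemented by the inclusion $\tau$ \emph{compatibly with the structure $\ast$-functors}, so that after the substitution it is $\beta_\ast$ and $\beta^\ast$ that appear rather than some a priori different maps. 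A second, lesser, point is the use of Corollary~\ref{BP2} to strip the suspension off Proposition~\ref{Jequivtomappingcylinder}; alternatively one can sidestep it by carrying out the whole argument one suspension up and applying the natural isomorphism $E(\mathcal{D},-)\cong E(\Sigma\mathcal{D},\Sigma-)$ only at the very end.
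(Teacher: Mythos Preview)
Your proposal is correct and follows essentially the same route as the paper: reduce to the mapping-cylinder sequence via Lemma~\ref{exactinthemiddle}, then use Proposition~\ref{Jequivtomappingcylinder} together with Corollary~\ref{BP2} to identify $\mathcal{J}$ with $\mathcal{C}_\alpha$ in the $E$-theory category. You are in fact more explicit than the paper about the compatibility check $p\circ\tau=\beta$ and about why the suspension can be stripped from Proposition~\ref{Jequivtomappingcylinder}; the paper merely gestures at the commutative diagram and leaves these details implicit.
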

\begin{proof}
\begin{enumerate}
\item First we consider the connection between $\mathcal{J}$ and $\mathcal{C}_{\alpha}$. From Lemma~\ref{exactinthemiddle}, with $\alpha \colon \mathcal{B} \rightarrow \mathcal{A}$, it follows that the sequence
\[E(\mathcal{D},\mathcal{C}_{\alpha}) \xrightarrow{\beta_\ast} E(\mathcal{D},\mathcal{B}) \xrightarrow{\alpha_\ast} E(\mathcal{D},\mathcal{A})\] is exact.
Then by Proposition~\ref{Jequivtomappingcylinder}, we have that $E(\Sigma \mathcal{C}_{\alpha}, \Sigma\mathcal{J}) \cong E(\Sigma \mathcal{J}, \Sigma \mathcal{C}_{\alpha})$ and hence by Corollary~\ref{BP2}, $\mathcal{J}$ and $\mathcal{C}_{\alpha}$ are isomorphic in $E$-theory. 
Then half exactness of the required sequence follows from the commutative diagram:
\[\xymatrixcolsep{3pc}\xymatrixrowsep{3pc}\xymatrix{
0 \ar[r] & E(\mathcal{D},\mathcal{C}_{\alpha}) \ar[r] \ar[d]_{\cong} &E(\mathcal{D},\mathcal{B}) \ar[r] \ar@{=}[d] &E(\mathcal{D},\mathcal{A})\ar@{=}[d]  \ar[r] &0\\
0 \ar[r] & E(\mathcal{D},\mathcal{J}) \ar[r]^{\beta_{\ast}} &E(\mathcal{D},\mathcal{B}) \ar[r]^{\alpha_{\ast}} &E(\mathcal{D},\mathcal{J}) \ar[r] & 0}\]
\item Similarly, the proof of 1 works in the contravariant case.
\end{enumerate}
\end{proof}

Now to extend to long exact sequences, we require the following Proposition.
\begin{proposition}\label{contractibleimpliesiso}
Let \[0 \rightarrow \mathcal{J} \xrightarrow{i} \mathcal{B} \xrightarrow{\alpha} \mathcal{A} \rightarrow 0,\] be a short exact sequence of graded $C^\ast$-categories. Then if $\mathcal A$ is contractible, then $i$ induces an isomorphism between $E(\mathcal{D}, \mathcal{J})$ and $E(\mathcal{D}, \mathcal{B})$.
\begin{proof}
Since $\mathcal A$ is contractible, it follows that for any separable $C^\ast$-category $E(\mathcal{D}, \mathcal{A}) = 0$, since $E$ is a homotopy invariant functor, we have that this is equivalent to having $\mathcal{A}$ as the zero category. 
Then applying half exactness to our short exact sequence, and using the above, we have 
\[E(\mathcal{D}, \mathcal{J})  \rightarrow E(\mathcal{D}, \mathcal{B})  \rightarrow 0\]
and so $i_\ast$ is surjective. 
Now we check that $i_\ast$ is injective.
Consider the short exact sequence,
\[0 \rightarrow \Sigma \mathcal{A} \rightarrow \mathcal{C}_{\alpha} \rightarrow \mathcal{B} \rightarrow 0\]
and once again apply half exactness:
\[E(\mathcal{D}, \Sigma \mathcal{A})  \rightarrow E(\mathcal{D}, \mathcal{C}_{\alpha})  \rightarrow E(\mathcal{D}, \mathcal{B}).\]
Then since $\mathcal{A}$ is contractible, $\Sigma \mathcal{A}$ is also contracible and as $J \cong \mathcal{C}_{\alpha}$ in the $E$-theory category, we obtain the exact sequence
\[0 \rightarrow E(\mathcal{D}, \mathcal{J})  \rightarrow E(\mathcal{D}, \mathcal{B}),\]
and $i_\ast$ is injective, and hence $E(\mathcal{D}, \mathcal{J}) \cong E(\mathcal{D}, \mathcal{B})$ as required. 
\end{proof}
\end{proposition}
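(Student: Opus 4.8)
The plan is to show separately that the induced map $i_\ast\colon E(\mathcal{D},\mathcal{J})\to E(\mathcal{D},\mathcal{B})$ is surjective and injective, in each case by inserting a vanishing $E$-group into one of the half-exact sequences of Lemma~\ref{halfexact}.

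First I would record the vanishing we need. If $\mathcal{A}$ is contractible then the identity $\ast$-functor $\id_{\mathcal{A}}$ is homotopic to the zero $\ast$-functor, so by the Homotopy invariance theorem the induced endomorphisms of $E(\mathcal{D},\mathcal{A})$ coincide; as one is the identity and the other is $0$, this forces $E(\mathcal{D},\mathcal{A})=0$. The suspension $\Sigma\mathcal{A}=\Sigma\otimes\mathcal{A}$ of a contractible $C^\ast$-category is again contractible (tensor a contracting homotopy with $\id_{\Sigma}$), so $E(\mathcal{D},\Sigma\mathcal{A})=0$ likewise.

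Surjectivity is then immediate: part~(1) of Lemma~\ref{halfexact} applied to the given short exact sequence yields the exact row
\[ E(\mathcal{D},\mathcal{J})\xrightarrow{\,i_\ast\,}E(\mathcal{D},\mathcal{B})\xrightarrow{\,\alpha_\ast\,}E(\mathcal{D},\mathcal{A})=0 , \]
so $i_\ast$ is onto. For injectivity I would bring in the mapping cylinder $\mathcal{C}_\alpha$ of $\alpha\colon\mathcal{B}\to\mathcal{A}$ together with the short exact sequence $0\to\Sigma\mathcal{A}\to\mathcal{C}_\alpha\to\mathcal{B}\to 0$ that underlies Proposition~\ref{Jequivtomappingcylinder}. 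Part~(1) of Lemma~\ref{halfexact} gives exactness of
\[ E(\mathcal{D},\Sigma\mathcal{A})\to E(\mathcal{D},\mathcal{C}_\alpha)\to E(\mathcal{D},\mathcal{B}) , \]
and since the left term vanishes the second arrow is injective. Proposition~\ref{Jequivtomappingcylinder} provides mutually inverse morphisms between $\Sigma\mathcal{J}$ and $\Sigma\mathcal{C}_\alpha$ in the $E$-theory category, and Corollary~\ref{BP2} desuspends these to an isomorphism $\mathcal{J}\cong\mathcal{C}_\alpha$ in $E$-theory. Under this identification the injective arrow above becomes $i_\ast$, and combining with surjectivity gives $E(\mathcal{D},\mathcal{J})\cong E(\mathcal{D},\mathcal{B})$.

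The step I expect to be the main obstacle is the last identification. One must verify that the $E$-theory equivalence $\mathcal{J}\simeq\mathcal{C}_\alpha$ built from the inclusion $\tau\colon\mathcal{J}\to\mathcal{C}_\alpha$ and the boundary map $\sigma$ of Proposition~\ref{Jequivtomappingcylinder} intertwines the projection $\mathcal{C}_\alpha\to\mathcal{B}$ with $i$, so that the injective arrow really is $i_\ast$; this is a naturality argument / diagram chase in the $E$-theory category, resting on the $\ast$-functor identity (projection)$\circ\tau=i$ together with the fact that the suspension isomorphism of Corollary~\ref{BP2} respects the $E$-theory product. Along the way one should also confirm the routine points that $\mathcal{C}_\alpha$ is separable when $\mathcal{A}$ and $\mathcal{B}$ are, so that Lemma~\ref{halfexact} is applicable.
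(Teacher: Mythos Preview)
Your proposal is correct and follows essentially the same route as the paper: vanishing of $E(\mathcal{D},\mathcal{A})$ from homotopy invariance gives surjectivity via Lemma~\ref{halfexact}, and injectivity comes from the mapping-cylinder sequence $0\to\Sigma\mathcal{A}\to\mathcal{C}_\alpha\to\mathcal{B}\to 0$ together with the $E$-theory equivalence $\mathcal{J}\simeq\mathcal{C}_\alpha$ from Proposition~\ref{Jequivtomappingcylinder} and Corollary~\ref{BP2}. You are in fact more careful than the paper in flagging the naturality check that the equivalence $\mathcal{J}\simeq\mathcal{C}_\alpha$ carries the projection $\mathcal{C}_\alpha\to\mathcal{B}$ to $i$; the paper simply asserts the resulting exact sequence without commenting on this point.
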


\begin{theorem}\label{LES}
For the functor $E$ from the category of graded $C^\ast$-categories and $\ast$-functors to the category of abelian groups and group homomorphisms to every short exact sequence of separable graded $C^\ast$-categories 
\[0 \rightarrow \mathcal{J} \rightarrow \mathcal{B} \rightarrow \mathcal{A} \rightarrow 0,\]
we obtain a long exact sequence of abelian groups
\[\cdots \rightarrow E(\mathcal{D}, \Sigma \mathcal{B}) \rightarrow E(\mathcal{D}, \Sigma \mathcal{A}) \xrightarrow{\partial_{\ast}} E(\mathcal{D}, \mathcal{J}) \rightarrow E(\mathcal{D}, \mathcal{B}) \rightarrow E(\mathcal{D}, \mathcal{A}),\]
where the connecting map $\partial_{\ast}$ fits in to the commutative diagram 
\[\xymatrix{E(\mathcal{D}, \Sigma \mathcal{A}) \ar[dr]^{\beta_{\ast}} \ar[rr]^{\delta_{\ast}} & &  E(\mathcal{D}, \mathcal{J}) \\ & E(\mathcal{D}, \mathcal{C}_{\alpha}) \ar[ur]^{\tau_{\ast}} &}\]
\end{theorem}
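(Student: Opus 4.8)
The strategy is to build the long exact sequence by iterating the mapping-cone construction and applying half-exactness at each stage, in the manner of the Puppe sequence. The two inputs are: first, Lemma~\ref{halfexact}(1), which gives exactness in the middle of $E(\mathcal D,\mathcal J')\to E(\mathcal D,\mathcal B')\to E(\mathcal D,\mathcal A')$ for \emph{every} short exact sequence $0\to\mathcal J'\to\mathcal B'\to\mathcal A'\to 0$ of separable graded $C^\ast$-categories; second, the fact that for such a sequence, with quotient map $\alpha\colon\mathcal B'\to\mathcal A'$, the mapping cylinder $\mathcal C_\alpha$ fits in a short exact sequence $0\to\Sigma\mathcal A'\to\mathcal C_\alpha\to\mathcal B'\to 0$ (as used in Proposition~\ref{contractibleimpliesiso}), while the inclusion $\tau\colon\mathcal J'\to\mathcal C_\alpha$ is invertible in the $E$-theory category with $\tau$ followed by the projection $\mathcal C_\alpha\to\mathcal B'$ equal to the inclusion $\mathcal J'\hookrightarrow\mathcal B'$ (Proposition~\ref{Jequivtomappingcylinder} together with Corollary~\ref{BP2}). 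Before iterating one should record that mapping cylinders and suspensions of separable graded $C^\ast$-categories are again separable and share their objects, so that these two inputs may be applied repeatedly.

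First I would treat the right-hand end $E(\mathcal D,\Sigma\mathcal A)\xrightarrow{\partial_\ast}E(\mathcal D,\mathcal J)\to E(\mathcal D,\mathcal B)\to E(\mathcal D,\mathcal A)$. Exactness at $E(\mathcal D,\mathcal B)$ is exactly Lemma~\ref{halfexact}(1). For exactness at $E(\mathcal D,\mathcal J)$, apply Lemma~\ref{halfexact}(1) to the sequence $0\to\Sigma\mathcal A\to\mathcal C_\alpha\to\mathcal B\to 0$, giving exactness in the middle of $E(\mathcal D,\Sigma\mathcal A)\xrightarrow{\beta_\ast}E(\mathcal D,\mathcal C_\alpha)\to E(\mathcal D,\mathcal B)$ (writing $\beta$ for the inclusion $\Sigma\mathcal A\to\mathcal C_\alpha$), and then transport the middle term along the isomorphism $\tau_\ast\colon E(\mathcal D,\mathcal J)\xrightarrow{\cong}E(\mathcal D,\mathcal C_\alpha)$; since the projection $\mathcal C_\alpha\to\mathcal B$ composed with $\tau$ is the inclusion of $\mathcal J$ in $\mathcal B$, this yields exactness of $E(\mathcal D,\Sigma\mathcal A)\xrightarrow{\partial_\ast}E(\mathcal D,\mathcal J)\to E(\mathcal D,\mathcal B)$ with $\partial_\ast=\tau_\ast^{-1}\circ\beta_\ast$, which is precisely the composite displayed in the triangle of the statement.

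To obtain the whole sequence I would iterate. Put $(\mathcal J_0,\mathcal B_0,\mathcal A_0)=(\mathcal J,\mathcal B,\mathcal A)$ with quotient map $\alpha_0$, and recursively let the $(k+1)$-st short exact sequence $0\to\mathcal J_{k+1}\to\mathcal B_{k+1}\xrightarrow{\alpha_{k+1}}\mathcal A_{k+1}\to 0$ be the mapping-cylinder sequence $0\to\Sigma\mathcal A_k\to\mathcal C_{\alpha_k}\to\mathcal B_k\to 0$ of $\alpha_k$; each $\alpha_{k+1}$ is a surjection with kernel $\mathcal J_{k+1}$, so the two inputs apply at every stage. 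Then $\mathcal A_{k+1}=\mathcal B_k$ and, by the second input, $\mathcal B_{k+1}=\mathcal C_{\alpha_k}$ is $E$-equivalent to $\mathcal J_k$; an easy induction identifies $\mathcal J_k$ up to $E$-equivalence with $\mathcal J,\Sigma\mathcal A,\Sigma\mathcal B,\Sigma\mathcal J,\Sigma^2\mathcal A,\dots$ for $k=0,1,2,\dots$, so the groups $E(\mathcal D,\mathcal J_k)$ run through $E(\mathcal D,\mathcal J),E(\mathcal D,\Sigma\mathcal A),E(\mathcal D,\Sigma\mathcal B),E(\mathcal D,\Sigma\mathcal J),\dots$. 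Applying Lemma~\ref{halfexact}(1) to the $(k+1)$-st sequence and transporting its middle term $E(\mathcal D,\mathcal B_{k+1})$ along the $E$-equivalence $\mathcal B_{k+1}\simeq\mathcal J_k$ produces, exactly as in the previous paragraph, a map $\delta_{k+1}\colon E(\mathcal D,\mathcal J_{k+1})\to E(\mathcal D,\mathcal J_k)$ together with exactness of $E(\mathcal D,\mathcal J_{k+1})\xrightarrow{\delta_{k+1}}E(\mathcal D,\mathcal J_k)\xrightarrow{\delta_k}E(\mathcal D,\mathcal J_{k-1})$, where $\delta_0$ is read as the inclusion-induced map $E(\mathcal D,\mathcal J)\to E(\mathcal D,\mathcal B)$ and $\delta_{-1}$ as $E(\mathcal D,\mathcal B)\to E(\mathcal D,\mathcal A)$. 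Splicing these exact triples over all $k\geq 0$ gives the asserted long exact sequence, with $\partial_\ast=\delta_1$.

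The step needing care is the bookkeeping: one must check that the $E$-equivalences used at successive stages are chosen so that the image of one triple and the kernel of the next literally coincide as subgroups, not merely up to isomorphism, equivalently that $\tau_k$ followed by $\alpha_{k+1}$ is the structural inclusion $\mathcal J_k\hookrightarrow\mathcal B_k$ for every $k$. This is a formal consequence of the naturality already built into the quasicentral-approximate-unit and mapping-cylinder constructions (Propositions~\ref{SESgivesasy}, \ref{twoSESgivesquarediagram}, \ref{Jequivtomappingcylinder}); granting it, the remainder is a diagram chase in abelian groups. Finally, I would remark that invoking Bott periodicity (Corollary~\ref{BP2}) collapses this unbounded sequence to a cyclic six-term exact sequence in the complex case and a cyclic twenty-four-term exact sequence in the real case, though only the unbounded form is asserted in the theorem.
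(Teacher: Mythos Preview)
Your treatment of exactness at $E(\mathcal D,\mathcal B)$ and at $E(\mathcal D,\mathcal J)$ coincides with the paper's. The divergence is at $E(\mathcal D,\Sigma\mathcal A)$ and to the left: you iterate the mapping-cone construction in the manner of the Puppe sequence, whereas the paper argues at $E(\mathcal D,\Sigma\mathcal A)$ by introducing an auxiliary $C^\ast$-category $\mathcal T$ that contains both $\Sigma\mathcal A$ and a copy of $\Sigma\mathcal B\cong C_0((-1,0),\mathcal B)$, shows that the inclusion $\kappa\colon\Sigma\mathcal A\hookrightarrow\mathcal T$ is an $E$-isomorphism because $\mathcal T/\Sigma\mathcal A$ is contractible (Proposition~\ref{contractibleimpliesiso}), and then applies half-exactness to the short exact sequence $0\to C_0((-1,0),\mathcal B)\to\mathcal T\to\mathcal C_\alpha\to 0$, identifying the resulting exact row with the desired one via $\kappa_\ast$ and the equivalence $\mathcal C_\alpha\simeq\mathcal J$. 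Your route is more uniform and is the standard homotopy-theoretic pattern, but the bookkeeping you flag is where the real content lies: under your successive identifications $\mathcal C_{\alpha_k}\simeq\mathcal J_{k-1}$ one must verify that the transported maps $\delta_k$ agree (at least up to sign) with the suspended structural maps $\Sigma i,\Sigma\alpha,\ldots$, and this is not automatic from naturality alone. The paper's $\mathcal T$-trick buys exactly this: the inclusion $\Sigma\mathcal B\hookrightarrow\mathcal T$ is \emph{homotopic} to $\kappa\circ\Sigma\alpha$ as honest $\ast$-functors, so the map $E(\mathcal D,\Sigma\mathcal B)\to E(\mathcal D,\Sigma\mathcal A)$ is visibly $(\Sigma\alpha)_\ast$ rather than something merely $E$-equivalent to it. Both arguments are correct; the paper's is more direct for the single step at $\Sigma\mathcal A$, while yours extends to the whole left tail without having to re-invoke any new construction.
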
 
\begin{proof}
Since $E$ is half exact by Lemma~\ref{halfexact}, it suffices to show that we have exactness at $E(\mathcal{D}, \Sigma \mathcal{A})$ and $E(\mathcal{D}, \mathcal{J})$. For exactness at $E(\mathcal{D}, \mathcal{J})$ consider the short exact sequence
\[0 \rightarrow \Sigma \mathcal{A} \rightarrow \mathcal{C}_{\alpha} \rightarrow \mathcal{B} \rightarrow 0.\]
Then by half exactness, we have that 
\[E(\mathcal{D}, \Sigma \mathcal{A}) \rightarrow E(\mathcal{D}, \mathcal{C}_{\alpha}) \rightarrow E(\mathcal{D}, \mathcal{B})\]
is exact, and then by equivalence of $\mathcal{J}$ and $\mathcal{C}_{\alpha}$ in the $E$-theory category, it follws that 
\[E(\mathcal{D}, \Sigma \mathcal{A}) \xrightarrow{\partial_{\ast}} E(\mathcal{D}, \mathcal{J}) \rightarrow E(\mathcal{D}, \mathcal{B})\] 
is exact as required. 
Now for exactness at $E(\mathcal{D}, \Sigma \mathcal{A})$, consider the $C^\ast$-category $\mathcal{T} =C_0((-1,0), \mathcal{B})$ with 
\[\Ob(\mathcal{T}) = \{\Ob(\mathcal{A})\}\]
\[ \Hom ((a,b),(a',b')) = \left\{ (x,f) \; \middle|\ \; \Large\begin{subarray}{c} x \; \in \; \Hom (a,a')_{\mathcal A}, \ f(-1)=f(1)= \alpha (x), \ \vspace{2mm}  \\ f\colon  [-1,1] \; \longrightarrow  \; \Hom (b,b')_{\mathcal B} \textrm{ continuous}    
   \end{subarray}\right\} . \]
Then we have a canonical embedding $\kappa \colon \Sigma \mathcal{A} \rightarrow \mathcal{T}$, which induces a map from $\kappa_{\ast} \colon E(\mathcal{D}, \Sigma \mathcal{A}) \rightarrow E(\mathcal{D}, \mathcal{T})$, which is an isomorphism since the quotient of $\mathcal{T}$ by $\Sigma \mathcal{A}$ is contractible and by Proposition~\ref{contractibleimpliesiso}, $\kappa_{\ast}$ is an isomorphism. 
Now $\Sigma \mathcal{B} \cong C_0((-1,0), \mathcal{B})$ and we have a canonical embedding $C_0((-1,0), \mathcal{B}) \rightarrow \mathcal{T}$, which is homotopic to the composition $\Sigma \mathcal{B} \xrightarrow{\Sigma \alpha} \Sigma \mathcal{A} \xrightarrow{\kappa} \mathcal{T}$. Then applying half exactness to the following short exact sequence
\[ 0 \rightarrow C_0((-1,0), \mathcal{B}) \rightarrow \mathcal{T} \rightarrow \mathcal{C}_{\alpha} \rightarrow 0 \]
we obtain the top exact sequence in the diagram 
\[\xymatrixcolsep{3pc}\xymatrixrowsep{3pc}\xymatrix{E(\mathcal{D}, C_0((-1,0), \mathcal{B}))\ar[r] \ar[d]^{\cong} & E(\mathcal{D}, \mathcal{T})\ar[r]\ar[d]& E(\mathcal{D}, \mathcal{C}_{\alpha}) \ar[d]^{\cong} \\
E(\mathcal{D}, \Sigma \mathcal{B})\ar[r]& E(\mathcal{D}, \Sigma \mathcal{A}) \ar[r]& E(\mathcal{D}, \mathcal{J}) }\] 
and hence exactness at $E(\mathcal{D}, \Sigma \mathcal{A})$ follows.
\end{proof}
The contravariant case is proved similarly.

By Bott periodicity and Theorem~\ref{LES}, we obtain the following result. 
\begin{theorem}
Let $\mathcal{D}$ be a separable $C^\ast$-category and let \[0 \rightarrow \mathcal{J} \rightarrow \mathcal{B} \rightarrow \mathcal{A} \rightarrow 0,\] be a short exact sequence of separable $C^\ast$-categories. Then there are six term exact sequences
\[\xymatrixcolsep{3pc}\xymatrixrowsep{3pc}\xymatrix{
E(\mathcal D, \Sigma\mathcal J) \ar[r]& E(  \mathcal D, \Sigma \mathcal B)\ar[r] &E(\mathcal D, \Sigma \mathcal A)\ar[d]\\
E(\mathcal D,\mathcal A) \ar[u] &\ar[l]^{} E(\mathcal D, \mathcal B)  &\ar[l]^{}E( \mathcal D, \mathcal J)} ,\]
and 
\[\xymatrixcolsep{3pc}\xymatrixrowsep{3pc}\xymatrix{
E(\Sigma \mathcal J, \mathcal D) \ar[d] & ( \Sigma\mathcal B, \mathcal D)\ar[l] &( \Sigma \mathcal A, \mathcal D) \ar[l]\\
E(\mathcal A,\mathcal D) \ar[r]^{} &E(\mathcal B,\mathcal D) \ar[r]^{} &E( \mathcal J, \mathcal D)\ar[u]} ,\]
where the boundary maps are defined by Bott periodicity and product with the $E$-theory class of an element $\sigma \in E(\Sigma \mathcal A, \mathcal J)$ associated to the short exact sequence. 
\end{theorem}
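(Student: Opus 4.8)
The plan is to fold the long exact sequence of Theorem~\ref{LES} into a cyclic six-term sequence by invoking Bott periodicity, and then to identify the two connecting maps of the resulting hexagon with the $E$-theory product against the class $\sigma$ (respectively its suspension).

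First, recall that Theorem~\ref{LES}, applied to the short exact sequence $0\to\mathcal{J}\to\mathcal{B}\to\mathcal{A}\to 0$ and the separable graded $C^\ast$-category $\mathcal{D}$, already supplies the half-infinite long exact sequence
\[ \cdots \to E(\mathcal{D},\Sigma^2\mathcal{A}) \to E(\mathcal{D},\Sigma\mathcal{J}) \to E(\mathcal{D},\Sigma\mathcal{B}) \to E(\mathcal{D},\Sigma\mathcal{A}) \xrightarrow{\partial_\ast} E(\mathcal{D},\mathcal{J}) \to E(\mathcal{D},\mathcal{B}) \to E(\mathcal{D},\mathcal{A}), \]
in which the unlabelled maps are induced by the $\ast$-functors of the short exact sequence and their suspensions, and $\partial_\ast=\tau_\ast\circ\beta_\ast$ factors through $E(\mathcal{D},\mathcal{C}_\alpha)$. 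I would first note that, by Proposition~\ref{Jequivtomappingcylinder} together with the construction of $\sigma$ in Proposition~\ref{SESgivesasy}, this $\partial_\ast$ coincides with the homomorphism given by the $E$-theory product with $\sigma\in E(\Sigma\mathcal{A},\mathcal{J})$; this follows from the naturality of the product with respect to $\ast$-functors recorded after the Functoriality lemma, together with the identification of $\mathcal{J}$ with $\mathcal{C}_\alpha$ in the $E$-theory category.

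Next, Bott periodicity in the complex case gives a natural isomorphism $E(\mathcal{D},\Sigma^2\mathcal{X})\cong E(\mathcal{D},\mathcal{X})$ for every graded $C^\ast$-category $\mathcal{X}$, obtained from the Bott element together with $\C_{2,0}\cong\C_{1,1}$ and $\mathcal{K}\gtp\F_{1,1}\cong\mathcal{K}$. Applying this to the terms $E(\mathcal{D},\Sigma^2\mathcal{J})$, $E(\mathcal{D},\Sigma^2\mathcal{B})$, $E(\mathcal{D},\Sigma^2\mathcal{A})$ of the long exact sequence, and using Corollary~\ref{BP2} to check that these identifications intertwine the maps of the sequence with their double suspensions, the long exact sequence becomes periodic of period six and hence extends to a bi-infinite $6$-periodic exact sequence. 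Reading off six consecutive terms yields the exact hexagon, with the downward connecting map $E(\mathcal{D},\Sigma\mathcal{A})\to E(\mathcal{D},\mathcal{J})$ equal to $\sigma\cdot(-)$ and the upward connecting map $E(\mathcal{D},\mathcal{A})\to E(\mathcal{D},\Sigma\mathcal{J})$ equal to $(\Sigma\sigma)\cdot(-)$ precomposed with the Bott isomorphism $E(\mathcal{D},\mathcal{A})\cong E(\mathcal{D},\Sigma^2\mathcal{A})$. The contravariant hexagon is obtained in the same way from the contravariant version of Theorem~\ref{LES} and Bott periodicity in the first variable, $E(\Sigma^2\mathcal{X},\mathcal{D})\cong E(\mathcal{X},\mathcal{D})$. (In the real case one would instead obtain a $24$-periodic, hence $24$-term, exact sequence using $\Sigma^8$-periodicity.)

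The main obstacle is bookkeeping rather than new mathematics: one must verify that the Bott periodicity isomorphisms used to close up the sequence genuinely commute with the maps induced by the $\ast$-functors of the short exact sequence and with $\partial_\ast$, so that the folded sequence carries exactly the stated maps. This reduces to naturality of the $E$-theory product and of the Bott classes with respect to $\ast$-functors, which is already available; the one delicate point is to pin down that the two connecting maps of the hexagon are, up to the Bott isomorphism, the products with $\sigma$ and with $\Sigma\sigma$.
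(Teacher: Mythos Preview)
Your proposal is correct and takes essentially the same approach as the paper: the paper's proof consists of the single sentence ``By Bott periodicity and Theorem~\ref{LES}, we obtain the following result,'' and you have simply spelled out what this entails. Your observation that in the real case one only obtains a $24$-term cyclic sequence (via $\Sigma^8$-periodicity) rather than a six-term one is a valid caveat that the paper's statement does not address.
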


%
\bibliographystyle{alpha}
\bibliography{data}
\end{document}